\newcommand{\abs}[1]{\left|{#1}\right|}
\newcommand{\sgn}{\mathrm{sgn}}
\newcommand{\z}{\overline{z}}
\newcommand{\Z}{\mathbb Z}
\newcommand{\Ha}{\mathbb H}
\newcommand{\K}{\mathbb K}
\newcommand{\C}{\mathbb C}
\newcommand{\R}{\mathbb R}
\newcommand{\cg}{\textnormal{\textsl{g}}}
\newcommand{\SL}{\operatorname{SL}}
\newcommand{\cL}{\mathcal{L}}
\newcommand{\idd}{\mathds{1}}
\newcommand{\sv}{s^{\vee}}
\newtheorem{theorem}{Theorem}[section]
\newtheorem{lemma}[theorem]{Lemma}
\newtheorem{definition}[theorem]{Definition}
\newtheorem{proposition}[theorem]{Proposition}
\newtheorem{corollary}[theorem]{Corollary}
\newtheorem{question}[theorem]{Question}
\theoremstyle{remark}
\newtheorem{remark}[theorem]{Remark}
\begin{document}

\title[Non-holomorphic Eisenstein series and the class numbers]{Non-holomorphic Eisenstein series for certain Fuchsian groups and class numbers}

\author{Bo-Hae Im}
\address{Department of Mathematical sciences,  KAIST, 291 Daehak-ro, Yuseong-gu, Daejeon 34141, South Korea}
\email{bhim@kaist.ac.kr}

\author{Wonwoong Lee}
\address{Department of Mathematical sciences,  KAIST, 291 Daehak-ro, Yuseong-gu, Daejeon 34141, South Korea}
\email{leeww@kaist.ac.kr}

\thanks{This work was supported by the National Research Foundation of Korea(NRF) grant funded by the Korea government(MSIT)
(No.~2020R1A2B5B01001835).}

\date{\today}
\subjclass[2020]{Primary 11F37; Secondary 11F06, 11F30;}
\keywords{Form class group, Imaginary quadratic class group, Non-holomorphic Eisenstein series, Polyharmonic Maass form}

\begin{abstract}   
We study certain types of Fuchsian groups of the first kind denoted by $R(N)$, which  coincide with the Fricke groups or the arithmetic Hecke triangle groups of low levels. We find all elliptic points and cusps of $R(p)$ for a prime $p$, and prove that there is a one-to-one correspondence between the set of equivalence classes of elliptic points of $R(p)$ and the imaginary quadratic class group. We also find the explicit formula of the Fourier expansion of the non-holomorphic Eisenstein series for $R(N)$ and study their analytic properties. These non-holomorphic Eisenstein series together with cusp forms provide a basis for the space of polyharmonic Maass forms for $R(N)$.
\end{abstract}

\maketitle


\setcounter{tocdepth}{2}
\tableofcontents


\section{Introduction} \label{intro}
Let $k$ be an even integer and $$\Delta_k:=y^2\left(\frac{\partial^2}{\partial x^2}+\frac{\partial^2}{\partial y^2}\right)-iky\left(\frac{\partial}{\partial x}+\frac{\partial}{\partial y}\right)$$ be the hyperbolic laplacian on the complex upper half plane $\mathbb H$. A shifted polyharmonic Maass form $f$ of weight $k$ for a Fuchsian group $\Gamma$ is a complex-valued smooth function on $\mathbb H$ satisfying the following conditions:
\begin{itemize}
    \item (modular invariant) $f|_k \gamma (z)=f(z)$ for any $\gamma \in \Gamma$, where $$ (f|_k \gamma) (z) :=(cz+d)^{-k}f\left(\frac{az+b}{cz+d}\right), \quad \gamma=\begin{pmatrix} a & b \\ c & d \end{pmatrix}. $$ We call $|_k$ a slash operator.
    \item (polyharmonic) $(\Delta_k-\lambda)^m f(z)=0$ for some $m \in \mathbb Z$ and $\lambda \in \mathbb C$.
    \item (moderate growth at cusps) $f$ grows polynomially at cusps in $y$, i.e. for each cusp $c \in \mathbb R$ of $\Gamma$ and $\sigma \in \text{SL}_2(\mathbb R)$ that sends the cusp $c$ to $\infty$, there exists $\alpha \in \mathbb R$ such that $(f|_k \sigma^{-1}) (x+iy)=O(y^{\alpha})$ as $y \to \infty$, uniformly in $x$.
\end{itemize}
A complex number $\lambda$ is called an eigenvalue of $f$ and a non-negative integer $m$ is called the (harmonic) depth of $f$.  One can define a shifted polyharmonic weak Maass form by replacing the moderate growth condition with the exponential growth.

The notion of shifted polyharmonic Maass forms includes the notion of classical Maass forms. When a depth $m$ is equal to $1$, it is a classical Maass cusp form (of weight $k$). Further if $\lambda=0$, it is called a harmonic Maass form. If we have $\lambda=0$ only, then we call it a polyharmonic Maass form. Obviously, this notion includes the notion of modular forms, which can be seen as holomorphic harmonic Maass forms.

Polyharmonicity is parallel with the literature on the polyharmonic functions for the Euclidean laplacian operator, which are classical and historic, extensive literature. See~\cite{MR0745128}, \cite{Emi99}, and \cite{MR2401623}. 

Polyharmonic Maass forms for the full modular group $\text{SL}_2(\mathbb Z)$ have been studied in recent years. Lagarias and Rhoades~\cite{MR357462} studied the space of polyharmonic Maass forms (with eigenvalue zero) $V_k^m(0)$. More precisely, they explicitly exhibit a basis of $V_k^m(0)$. 

One can define the notion of shifted polyharmonic Maass forms of the half-integer depth~$m+\frac{1}{2}$ by replacing the polyharmonic condition with
\begin{itemize}
    \item $(\Delta_k-\lambda)^m f(z)=g(z)$ for a holomorphic modular form $g(z)$.
\end{itemize}
Lagarias and Rhoades~\cite{MR357462} also determined the space of polyharmonic Maass forms of harmonic depth, completely.
In the sequel paper~\cite{MR3856183} of that, Andersen, Lagrias and Rhoades deal with the space of shifted polyharmonic Maass forms $V_k^m(\lambda)$ for nonzero eigenvalue $\lambda$. They showed that  the space $V_k^m(\lambda)$ is also finite-dimensional, found the upper bound of the dimension, decomposed the space into the direct sum of two subspaces, and exhibited a basis of one of them. 

Special kinds of polyharmonic Maass forms for congruence groups are also studied by several authors. Bruinier, Funke and Imamoḡlu~\cite{MR3353542} constructed the general regularized theta lift from the weak Maass forms of weight zero to the polyharmonic Maass forms of weight $1/2$. In particular, applying the lift to the constant function $1$, we get a certain polyharmonic Maass form of weight $1/2$ and depth $3/2$ for the congruence group $\Gamma_0(4)$, whose Fourier coefficients are related to real quadratic class numbers. Ahlgren, Andersen and Samart~\cite{MR3852563} introduced the analogous form for the full modular group $\text{SL}_2(\mathbb Z)$ and studied them. After then, the studies in a similar context on the space of polyharmonic weak Maass forms, which includes the notion of polyharmonic Maass forms, have also been actively conducted. See~\cite{MR3353542}, \cite{MR4198754}, \cite{MR3893030} and \cite{MR4047158}.

On the other hand, over the Hecke triangle groups and the Fricke groups, there have been a lot of developments about the theory of modular forms and their arithmetic properties, in each field, respectively. Let us introduce them briefly. For positive integers $m_1,m_2,m_3$ which are allowed to be infinity, with $1/m_1+1/m_2+1/m_3<1$,  a triangle group $\Gamma_{(m_1,m_2,m_3)}$ as an abstract group is defined by the group presentation
$$
\Gamma_{(m_1,m_2,m_3)}=\langle g_1, g_2 : g_1^{m_1}=g_2^{m_2}=(g_1 g_2)^{m_3}=1\rangle.
$$
In the presentation, we use the convention that the infinity-power is the identity. For example, $\Gamma_{(2,3,\infty)}=\{g_1, g_2 : g_1^2=g_2^3=1 \}$, which is isomorphic to $\text{PSL}_2(\mathbb Z)$. One can explicitly define $\Gamma_{(m_1,m_2,m_3)}$ as a subgroup of $\text{SL}_2(\mathbb R)$ as follows:
$$
\Gamma_{(m_1,m_2,m_3)}=\langle \gamma_1, \gamma_2, \gamma_3 \rangle,
$$
where $$\gamma_1=\begin{pmatrix} 2\cos(\pi/m_1) & 1 \\ -1 & 0 \end{pmatrix},\quad \gamma_2=\begin{pmatrix} 0 & 1 \\ -1 & 2\cos(\pi/m_2) \end{pmatrix},\quad \gamma_3=\begin{pmatrix} 1 & 2\cos(\pi/m_1)+2\cos(\pi/m_2)  \\ 0 & 1 \end{pmatrix}.$$ The other generators conjugate to these matrices can be chosen.

Doran, Gannon, Movasati and Shokri \cite{MR3228299} developed the theory of modular forms for triangle groups. They displayed the basis of modular forms on this group, and moreover explored the arithmeticity of the Fourier coefficients of them. Movasati and Shokri continue their study on the Fourier coefficients for non-arithmetic triangle groups in their sequel paper \cite{MR3253293}. These modular forms and related objects are studied from various perspectives in recent years. For instance, see \cite{MR4045722}, \cite{MR4160413} and \cite{MR3962509}.

The Fricke group of level $N$ is a subgroup of $\text{SL}_2{(\mathbb R)}$ generated by the congruence group~$\Gamma_0(N)$ and the matrix $\omega_N=\begin{pmatrix} 0 & -1/\sqrt N \\ \sqrt N & 0 \end{pmatrix}.$ It is a subgroup of the well-known group $\Gamma_0^+(N)$ which is generated by $\Gamma_0(N)$ and Atkin-Lehner involutions $\omega_e$ for the Hall divisors~$e$ of $N$. $\Gamma_0^+(N)$ is closely related to the congruence group $\Gamma_0(N)$ and the Hecke operators of level $N$ by the Atkin-Lehner theory. Hecke eigenforms are classified by the eigenvalues under the involution. Its eigenvalue is $1$ (in which case it is modular under~$\Gamma_0^+(N)$) or -1. In particular, if $e=N$, we call $w_e$ the Fricke involution.
The theory of Atkin-Lehner which was first developed by Atkin and Lehner \cite{MR0268123}, in particular, the theory on modular forms for the Fricke group is also far reached.

Two different types of the Fuchsian groups we introduced coincide with each other in the low level cases. When $N=2$ or $3$, the Atkin-Lehner group $\Gamma_0^+(N)$ is exactly the Fricke group of the same level, and they are conjugate to the Hecke triangle groups of type $(2,2N,\infty)$. They are triangle groups and the corresponding modular curves have genus~$0$. Furthermore, the Hecke triangle group $\Gamma_{(2,\infty,\infty)}$ is also a well-known group and it is conjugate to $\Gamma_0(2)$.

These groups $\Gamma_0^+(2)$ and $\Gamma_0^+(3)$ have been studied independently as the first two non-trivial simple examples of the Fricke groups or the Hecke triangle groups, from various points of view. For example, there have been results not only about the space of automorphic forms itself or its members, but also concerning the zeros of automorphic forms, linear relations on the Poincare series, or the period functions, and so on. For instance, see~\cite{MR3462568}, \cite{MR2344823}, or other subsequent results.

\

In this paper, we consider a new kind of the Fuchsian groups of the first kind which coincide with the Fricke groups or the Hecke triangle groups in the low level. We introduce them in the following subsection.

\subsection{Elliptic points and genera}\label{subsec_ell}

 Let $N \in \mathbb Z$ be a positive integer and let
\begin{align}\label{RN+-def}
    R(N)^+=\left\{\begin{pmatrix} a & b\sqrt N  \\ c\sqrt N & d \end{pmatrix} \in \text{SL}_2(\mathbb R) : a,b,c,d \in \mathbb Z \right\}, \\
    R(N)^-=\left\{\begin{pmatrix} a\sqrt N & b  \\ c & d\sqrt N \end{pmatrix} \in \text{SL}_2(\mathbb R) : a,b,c,d \in \mathbb Z \right\}.
\end{align}
Each of these sets is a not a group in general, but if we define
\begin{align}\label{Rndef}
    R(N)=R(N)^+ \cup R(N)^-,
\end{align}
then obviously $R(N)$ is a discrete subgroup of $\text{SL}_2(\mathbb R)$. Moreover, it turns out that they are finitely generated as abstract groups and in fact, are the Fuchsian groups with cofinite areas. Also, it turns out that they coincide with the Fricke groups or the Hecke triangle groups in the low level cases (see Section \ref{sec_RN}). 

Let $\text{Ell}^-(N)$ be the collection of $R(N)$-equivalence classes of elliptic points for $R(N)^-$, which means the points stabilized by some elliptic element in $R(N)^-$. Denote by $\text{Ell}^+(N)$  the set-theoretic complement of $\text{Ell}^-(N)$ in the collection of all equivalent classes of elliptic points for $R(N)$. We already know all elliptic points and cusps when $p=2,3$. For $p=2$, any elliptic point is equivalent to $i$ or $\frac{1+i}{\sqrt 2}$ and for $p=3$, is equivalent to $i$ or $\frac{\sqrt 3 +i}{2}.$ Any cusp for $R(p)$ when $p=2,3$ is equivalent to $\infty.$ Thus in most cases we assume $p \geq 5$.

We investigate where the elliptic points lie, what the cusps are, and how they are related to the class group of quadratic forms. Specifically, we prove our first main theorem:
\begin{theorem}\label{thm_ell}
Let $p \geq 5$ be a prime.
\begin{enumerate}[\normalfont(a)]
    \item $\#\left(\mathrm{Ell}^+(p)\right)=\begin{cases} 2 & \text { if } p \equiv 1\pmod{12}, \\ 1 & \text { if } p \equiv 5,7 \pmod{12}, \\ 0 & \text { if } p \equiv 11  \pmod{12}.  \end{cases}$
    \item There is a one-to-one correspondence between $\mathrm{Ell}^-(p)$ and the set of all reduced forms of (not necessarily primitive) positive definite binary quadratic forms of discriminant $-4p$.
\end{enumerate}
\end{theorem}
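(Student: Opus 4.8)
The plan is to analyze elliptic points of $R(p)$ by studying elliptic elements $\gamma$ and the quadratic forms they naturally define. Recall that an elliptic element $\gamma \in \SL_2(\R)$ with fixed point $z_0 \in \Ha$ satisfies $|\operatorname{tr}(\gamma)| < 2$, and its fixed point is the root in $\Ha$ of $cz^2 + (d-a)z - b = 0$. For $\gamma \in R(p)^-$ of the form $\begin{pmatrix} a\sqrt p & b \\ c & d\sqrt p\end{pmatrix}$, the associated quadratic polynomial has coefficients $c$, $(d-a)\sqrt p$, $-b$, which after clearing the $\sqrt p$ suggests attaching the binary quadratic form $Q_\gamma(x,y) = cx^2 + (d-a)\sqrt p \, xy - b y^2$; the key computation is that the discriminant of the fixed-point equation equals $\operatorname{tr}(\gamma)^2 - 4$, and since $\det \gamma = 1$ forces $adp - bc = 1$, one finds $\operatorname{tr}(\gamma)^2 = (a+d)^2 p < 4$, so the only possibility is $a+d = 0$, giving $\operatorname{tr}(\gamma) = 0$ and discriminant $-4$ (for the matrix), which after the $\sqrt p$ normalization corresponds to forms of discriminant $-4p$. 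This is the conceptual heart: elliptic elements in $R(p)^-$ have trace zero and are in bijection with integer forms $[c, \ast, -b]$ of discriminant $-4p$.

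First I would set up this correspondence precisely: to each reduced positive definite form $Q = [A, B, C]$ of discriminant $B^2 - 4AC = -4p$, associate the elliptic point $z_Q = \frac{-B + \sqrt{-4p}}{2A} \in \Ha$, and conversely check that the fixed point of a trace-zero element of $R(p)^-$ yields such a form. Note $B^2 - 4AC = -4p$ forces $B$ even, so writing $B = 2B'$ these are exactly the forms $[A, 2B', C]$ with $(B')^2 - AC = -p$; I would verify each such point is genuinely an elliptic point for $R(p)^-$ by exhibiting the stabilizing trace-zero matrix explicitly. Second, I would show the map descends to a well-defined map on equivalence classes: two forms are $\SL_2(\Z)$-properly-equivalent if and only if their associated points are $R(p)$-equivalent. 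The forward direction uses that the standard $\SL_2(\Z)$-action on forms matches an embedding of $\SL_2(\Z)$-type transformations inside $R(p)$ acting on the upper half plane. The reduced-form condition then picks out a unique representative in each class, which is precisely Gauss reduction theory, so $\mathrm{Ell}^-(p)$ is in bijection with reduced forms of discriminant $-4p$.

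For part (a), the complementary classes $\mathrm{Ell}^+(p)$ come from elliptic elements in $R(p)^+ = \left\{\begin{pmatrix} a & b\sqrt p \\ c\sqrt p & d\end{pmatrix}\right\}$ not already accounted for. Here the trace is $a + d$ (an integer), and the fixed-point equation $c\sqrt p \, z^2 + (d-a)z - b\sqrt p = 0$ has discriminant $(a+d)^2 - 4$; ellipticity forces $a + d \in \{-1, 0, 1\}$, giving elliptic points of order $2$ (trace $0$) or order $3$ (trace $\pm 1$). I would count solvability of the resulting norm equations modulo $p$: order-$2$ points require $-1$ (equivalently a representation tied to $p \equiv 1 \pmod 4$) and order-$3$ points require $-3$ to be a relevant quadratic residue (tied to $p \equiv 1 \pmod 3$), while one must subtract off any class already lying in $\mathrm{Ell}^-(p)$. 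Combining the two congruence conditions via the Chinese Remainder Theorem over $p \bmod 12$ yields the stated counts $2, 1, 1, 0$.

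The main obstacle I anticipate is twofold. The delicate point in part (b) is proving that $R(p)$-equivalence of the elliptic points coincides exactly with proper $\SL_2(\Z)$-equivalence of forms — both that equivalent forms give equivalent points (surjectivity of the correspondence onto classes) and, crucially, the injectivity: that two inequivalent reduced forms cannot yield $R(p)$-equivalent points. This requires controlling exactly which elements of $R(p) = R(p)^+ \cup R(p)^-$ can carry one fixed point to another, and ruling out spurious identifications coming from the $R(p)^+$ part or from mixing the two cosets. For part (a), the subtle step is correctly identifying when an elliptic point of $R(p)^+$ is in fact already $R(p)$-equivalent to one stabilized by an element of $R(p)^-$ (so that it should be excluded from $\mathrm{Ell}^+(p)$), which demands a careful case analysis of the possible stabilizer subgroups at each elliptic point and their interaction across the two components.
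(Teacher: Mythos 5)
Your overall skeleton (trace analysis forcing $\operatorname{tr}\in\{0,\pm1\}$, forms of discriminant $-4p$ for part (b), congruences mod $4$ and mod $3$ assembled mod $12$ for part (a)) matches the paper's, but your explicit dictionary between forms and points is wrong, and this is not cosmetic. The fixed point of a trace-zero element $\begin{pmatrix} a\sqrt p & b \\ c & -a\sqrt p \end{pmatrix}\in R(p)^-$ is $z=\frac{a\sqrt p+i}{c}$, with imaginary part $1/c$; your CM point $z_Q=\frac{-B+\sqrt{-4p}}{2A}$ has imaginary part $\sqrt p/A$ and is in general \emph{not} an elliptic point of $R(p)$ at all. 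Concretely, the reduced form $(1,0,p)$ gives $z_Q=\sqrt p\, i$; if some $\begin{pmatrix} a\sqrt p & b \\ c & d\sqrt p\end{pmatrix}\in R(p)^-$ fixed $\sqrt p\, i$, the fixed-point equation forces $b=-cp$, $d=a$, whence $1=adp-bc=p(a^2+c^2)$, impossible (and the $R(p)^+$ computation forces the identity). The genuine elliptic point in that class is $i$, stabilized by $\omega_1$; your $z_Q$ is $\sqrt p$ times the true point. The paper instead attaches to $z=\frac{a\sqrt p+i}{c}$ the form $QF(z)=(-pb,2pa,c)$, so your promised verification ``exhibit the stabilizing trace-zero matrix for $z_Q$'' would fail. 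Moreover, the step you yourself flag as delicate --- that $\SL_2(\Z)$-proper equivalence of forms coincides with $R(p)$-equivalence of points --- is left entirely unproven, and it is the crux: the image forms have first and middle coefficients divisible by $p$, and the paper's injectivity argument shows that any proper equivalence $\begin{pmatrix} r & s \\ t' & u\end{pmatrix}\in\SL_2(\Z)$ between two such forms satisfies $t'\equiv 0 \pmod p$ and hence lifts to $\begin{pmatrix} r & s\sqrt p \\ t\sqrt p & u\end{pmatrix}\in R(p)^+$ (with $t'=pt$), while identifications by $R(p)^-$ are absorbed by composing with a stabilizer of the point. Without this mod-$p$ divisibility argument you cannot rule out your ``spurious identifications,'' and surjectivity also needs the paper's two-case construction according to whether $p\mid A$ or not.

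In part (a) there is a second genuine gap: your congruence conditions ($-1$ a square mod $p$, resp.\ $a^2-a+1\equiv 0$ solvable mod $p$) give only \emph{existence} of a trace-$0$ or trace-$1$ elliptic point, whereas the stated counts require \emph{uniqueness} of the class, i.e.\ that all trace-$0$ (resp.\ trace-$1$) elliptic points of $R(p)^+$ are mutually $R(p)$-equivalent. A priori that number of classes could be some class number larger than $1$, and ``counting solvability of norm equations mod $p$'' cannot bound it by one. The paper's proof of singleton-ness is the substantive part: a factorization lemma in $\Z[i]$ (resp.\ $\Z[\zeta_3]$) exploiting that every prime factor of the lower-left datum $c$ is $2$ or $\equiv 1\pmod 4$ (resp.\ $3$ or $\equiv 1\pmod 3$), used to build an explicit matrix in $R(p)^+$ carrying the base point $z_0=\frac{a_0+i}{\sqrt p}$ to an arbitrary trace-$0$ point; plus the identification $[z_0]=[-\overline{z_0}]$, which uses that the positive definite form of discriminant $-4$ (resp.\ $-3$) arising there represents $1$ --- i.e.\ the class-number-one fact for those discriminants. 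Neither ingredient appears in your proposal, so as written the argument establishes at most which residues of $p$ admit points of each order, not the counts $2,1,1,0$.
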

In particular, there is a correspondence between the set of elliptic points for $R(p)^-$ and the ideal class group of an imaginary quadratic field. Namely, it is the ideal class group $C(-4p)$ of an imaginary quadratic field of discriminant $-4p$ when $p \equiv 1 \pmod{4}$, or a disjoint union of the two ideal class groups $C(-4p)$ and $C(-p)$ of discriminant $-4p$ and $-p$, respectively, when $p \equiv 3 \pmod 4$. Its number of elements is obtained from the class number $h(d)$ of discriminant $d=-4p$ or $d=-p$.

There is a classical result to compute the dimension of the space of holomorphic modular forms for a Fuchsian group of the first kind from the information on the modular curves. For example, see \cite{MR1021004}. In particular, if we find the elliptic points, the cusps, their orders, and the areas of curves as a $2$-dimensional Riemannian manifold whose metric is induced by the standard hyperbolic metric on the upper half plane $\Ha$, then we can determine the genera of the curves $X(R(p))$ which are the compactified modular curves associated with the Fuchsian group $R(p)$. One can define the area of $X(R(p))$ by the area of its fundamental domain contained in $\Ha$, see \cite{MR1021004}.

Together with Theorem \ref{thm_ell}, the information on the curve $X(R(p))$ implies the following Corollary.
\begin{corollary}\label{cor_genusthm}
Let $p$ be a prime number and $v_p$ be the area of $X(R(p))$. Let $$
h_p := \begin{cases} \frac{13}{6}+\frac{1}{2}h(-4p), & \text{ if }p \equiv 1 \pmod{12}, \\
\frac{3}{2}+\frac{1}{2}h(-4p), & \text{ if }p \equiv 5 \pmod{12}, \\
\frac{5}{3}+\frac{1}{2}h(-4p)+\frac{1}{2}h(-p), & \text{ if }p \equiv 7 \pmod{12},  \\
1+\frac{1}{2}h(-4p)+\frac{1}{2}h(-p), & \text{ if }p \equiv 11 \pmod{12}.
\end{cases}
$$
If $v_p <2\pi h_p$, then the genus $g_p$ of $X(R(p))$ is zero and $v_p=2\pi (h_p-2)$.
\end{corollary}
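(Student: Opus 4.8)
The plan is to apply the classical Gauss--Bonnet (area--genus) formula for a Fuchsian group of the first kind, exactly as in \cite{MR1021004}. Since the four congruence conditions defining $h_p$ exhaust precisely the primes $p \geq 5$, we may assume $p \geq 5$ throughout. Writing $s$ for the number of $R(p)$-inequivalent cusps and $e_1, \dots, e_r$ for the orders of a complete set of inequivalent elliptic points, this formula reads
\begin{equation*}
v_p = 2\pi\left(2g_p - 2 + s + \sum_{j=1}^{r}\left(1 - \frac{1}{e_j}\right)\right).
\end{equation*}
Everything then reduces to reading off the orbifold data $(s; e_1, \dots, e_r)$ from the earlier analysis of elliptic points and cusps, and to checking that the resulting quantity $s + \sum_j(1 - 1/e_j)$ equals the number $h_p$ in each residue class modulo $12$.

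First I would assemble this data. The cusp analysis shows that $R(p)$ has a single cusp class, represented by $\infty$, so $s = 1$. By Theorem \ref{thm_ell}(b) together with the remark following it, the points of $\mathrm{Ell}^-(p)$ are all elliptic of order $2$, and their number is $h(-4p)$ when $p \equiv 1 \pmod 4$ and $h(-4p) + h(-p)$ when $p \equiv 3 \pmod 4$; hence they contribute $\frac{1}{2} h(-4p)$, respectively $\frac{1}{2} h(-4p) + \frac{1}{2} h(-p)$, to the sum $\sum_j(1 - 1/e_j)$. By Theorem \ref{thm_ell}(a) and the explicit description of the stabilizers, the points of $\mathrm{Ell}^+(p)$ consist of one point of order $2$ and one of order $3$ when $p \equiv 1 \pmod{12}$, one point of order $2$ when $p \equiv 5 \pmod{12}$, one point of order $3$ when $p \equiv 7 \pmod{12}$, and none when $p \equiv 11 \pmod{12}$. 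Adding the cusp contribution $s = 1$ to these, one verifies case by case that $s + \sum_j(1 - 1/e_j)$ equals $\frac{13}{6} + \frac{1}{2} h(-4p)$, $\frac{3}{2} + \frac{1}{2} h(-4p)$, $\frac{5}{3} + \frac{1}{2} h(-4p) + \frac{1}{2} h(-p)$, and $1 + \frac{1}{2} h(-4p) + \frac{1}{2} h(-p)$ in the four respective cases, that is, exactly $h_p$.

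With this identity the Gauss--Bonnet formula becomes $v_p = 2\pi(2g_p - 2 + h_p)$, so that
\begin{equation*}
g_p = \frac{v_p - 2\pi(h_p - 2)}{4\pi}.
\end{equation*}
Since $g_p$ is a non-negative integer, $g_p = 0$ is equivalent to $v_p = 2\pi(h_p - 2)$, while $g_p \geq 1$ forces $v_p \geq 2\pi(h_p - 2) + 4\pi = 2\pi h_p$. Consequently the hypothesis $v_p < 2\pi h_p$ rules out $g_p \geq 1$, leaving $g_p = 0$ and $v_p = 2\pi(h_p - 2)$, which is the assertion. The only genuine work is the bookkeeping in the previous paragraph---matching the orbifold signature to $h_p$, and in particular pinning down the orders of the points of $\mathrm{Ell}^+(p)$; the remaining steps are a direct substitution into a standard formula together with the integrality and non-negativity of the genus, so I do not expect a substantive obstacle once Theorem \ref{thm_ell} and the cusp count are in hand.
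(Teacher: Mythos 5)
Your proposal is correct and follows exactly the paper's route: the authors likewise derive the corollary from the area formula $\frac{1}{2\pi}v_p = 2g_p - 2 + \sum_{z\in X(R(p))}(1-e_z^{-1})$ of \cite[Theorem 2.4.3]{MR1021004}, feeding in the single cusp from Proposition~\ref{prop_cusp}, the counts from Theorem~\ref{thm_ell}, and the orders $e_z\in\{2,3\}$ to identify $s+\sum_j(1-1/e_j)$ with $h_p$. Your case-by-case bookkeeping and the concluding integrality argument for $g_p$ match the intended proof.
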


In Section \ref{sec_RN}, we find the orders of all elliptic points and cusps for $R(p)$. Corollary \ref{cor_genusthm} is obtained by applying the formula \cite[Theorem 2.4.3]{MR1021004} of the area of modular curve
\begin{align*}
    \frac{1}{2\pi}v_p=2g_p-2+\sum_{z \in X(R(p))}(1-e_z^{-1}).
\end{align*}


\subsection{Non-holomorphic Eisenstein series}
Recall that the non-holomorphic Eisenstein series for the full modular group $\text{SL}_2(\mathbb Z)$ is
$$
G_k(z,s):=\frac{1}{2}\sum_{(m,n) \in \mathbb Z^2 \setminus \{(0,0)\}} \frac{y^s}{|mz+n|^{2s}(mz+n)^k}, \text{ where } \Re(s)>1-\frac{k}{2},
$$
or
$$
g(z,\overline{z},\alpha,\beta):=\sum_{(m,n) \in \mathbb Z^2 \setminus \{(0,0)\}}(mz+n)^{-\alpha}(m\overline{z}+n)^{-\beta}, 
$$
where $\alpha, \beta \in \C$ such that $\Re(\alpha+\beta)>2$ and $\alpha-\beta \in 2\Z$. Note that $G_k(z,s)=\frac{1}{2}y^s g(z,\overline{z},s+k,s)$. The series $G_k(z,s)$ can be normalized as
\begin{align*}
    G_k(z,s)&=\frac{1}{2}\sum_{(m,n) \in \mathbb Z^2 \setminus \{(0,0)\}} \frac{y^s}{|mz+n|^{2s}(mz+n)^k} \\
    &=\frac{1}{2}\sum_{t=1}^{\infty}\sum_{\substack{(m,n) \in \mathbb Z^2 \setminus \{(0,0)\} \\ \gcd(m,n)=t}} \frac{y^s}{|mz+n|^{2s}(mz+n)^k} \\
    &=\frac{1}{2}\zeta(2s+k)\sum_{t=1}^{\infty}\sum_{\substack{(c,d) \in \mathbb Z^2 \setminus \{(0,0)\} \\ \gcd(c,d)=1}} \frac{y^s}{|cz+d|^{2s}(cz+d)^k} \\
    &=\zeta(2s+k)E_k(z,s),
\end{align*}
and  $E_k(z,s)$ in the above can be written as a Poincare series
$$
E_k(z,s)=\sum_{\gamma \in \mathrm{SL}_2(\mathbb Z)_{\infty} \backslash \mathrm{SL}_2(\mathbb Z)} \left(\Im^s |_k \gamma\right) (z), \text{ where } \Im^s(z):=y^s \text{ for } z=x+iy \text{ with }  x,y\in \R.
$$
For that reason and considering the constant term of its Fourier expansion, $E_k(z,s)$ is called a normalized (non-holomorphic) Eisenstein series.

It is well known that the non-holomorphic Eisenstein series for $\SL_2(\Z)$ has several analytic properties. The functions $G_k(z,s)$ and $g(z,\z,\alpha,\beta)$ are harmonic with respect to some elliptic operators, so in particular they are smooth. They are defined over $\Re(s)>1-\frac{k}{2}$ or $\Re(\alpha+\beta)>2$ at first, but have analytic continuation in $s$ or $q=\alpha+\beta$ on the whole plane $\C$. Also they are invariant under the slash operator for $\SL_2(\Z)$, and satisfy certain functional equations.

To explore the similar ones for the group $R(N)$, consider the following sets: 
\begin{align*}
    &(\Z \times \Z)_{N, L}:=\bigsqcup_{t \in \Z_{>0}}\{(m,n)\in t\Z \times t\Z : \mathrm{gcd}(Nm,n)=t\}, \\
    &(\Z \times \Z)_{N, R}:=\bigsqcup_{t \in \Z_{>0}}\{(m,n)\in t\Z \times t\Z : \mathrm{gcd}(m,Nn)=t\},
\end{align*}
These are subsets of $(\Z \times \Z) \setminus \{(0,0)\}$. When $N=1$, they are exactly $(\Z \times \Z) \setminus \{(0,0)\}$. We define the parts of non-holomorphic Eisenstein series for $R(N)$ by
\begin{align*}
   &G_{N,k,L}(z,s):=\frac{1}{2}\sum_{(m,n) \in (\Z \times \Z)_{N,L}} \frac{y^s}{|\sqrt N mz+n|^{2s}(\sqrt N mz+n)^k}, \text{ where } \Re(s)>1-\frac{k}{2}, \\
   &G_{N,k,R}(z,s):=\frac{1}{2}\sum_{(m,n) \in (\Z \times \Z)_{N,R}} \frac{y^s}{| mz+\sqrt N n|^{2s}(mz+\sqrt N n)^k}, \text{ where } \Re(s)>1-\frac{k}{2},
\end{align*}
and
\begin{align*}
    &g_{N,L}(z,\overline{z},\alpha,\beta):=\sum_{(m,n) \in (\Z \times \Z)_{N,L}}(\sqrt N mz+n)^{-\alpha}(\sqrt N m\z+n)^{-\beta}, \text{ where } \Re(\alpha+\beta)>2, \\
    &g_{N,R}(z,\overline{z},\alpha,\beta):=\sum_{(m,n) \in (\Z \times \Z)_{N,R}}(mz+\sqrt N n)^{-\alpha}(m\z+\sqrt N n)^{-\beta}, \text{ where } \Re(\alpha+\beta)>2.
\end{align*}
We define the non-holomorphic Eisenstein series for $R(N)$ as the sum of these two types of functions, $$G_{N,k}(z,s):=G_{N,k,L}(z,s)+G_{N,k,R}(z,s),$$ correspondingly $$g_N(z,\z,\alpha,\beta):=g_{N,L}(z,\z,\alpha,\beta)+g_{N,R}(z,\z,\alpha,\beta).$$ Note that $G_{1,k,L}(z,s)=G_{1,k,R}(z,s)=G_k(z,s)$,  $g_{1,L}(z,\z,\alpha,\beta)=g_{1,R}(z,\z,\alpha,\beta)=g(z,\z,\alpha,\beta)$ and $G_{N,k}(z,s)=\frac{1}{2}y^s g_N(z,\overline{z},s+k,s)$. If $N=1$, these functions satisfy the analytic property and the automorphy condition. 

Before stating Theorem~\ref{thm_gfourier} which provides the Fourier expansion of $G_{N,k}(z,s)$, we introduce several notations. Let
\begin{align}\label{eqn_Gausssumprin}
    G(b,\idd_{n}):=\sum_{a \in (\Z/n\Z)^{\times}}(\zeta_n^a)^b
\end{align}
be the Ramanujan sum, that is the sum of the $b$th powers of the $n$th primitive roots of unity. In particular, if $b=1$, then $G(1,\idd_n)=\mu(n)$, the Mobius function. We denote the Dirichlet series associated to this sum by
\begin{align*}
    \cL(b,\idd_{\bullet},s):=\sum_{n=1}^{\infty}\frac{G(b,\idd_n)}{n^s}.
\end{align*}
Similarly, we define the associated local factors,
\begin{align*}
    \cL_N(b,\idd_{\bullet},s):=\prod_{p|N}\sum_{n=0}^{\infty}\frac{G(b,\idd_{p^n})}{p^{ns}}
\end{align*}
and the associated shifted series
\begin{align*}
    \cL(b,\idd_{N\bullet},s):=\sum_{n=1}^{\infty}\frac{G(b,\idd_{Nn})}{n^s}.
\end{align*}
By definition, $\cL(0,\idd_{\bullet},s)$ and $\cL(1,\idd_{\bullet},s)$ are just the Dirichlet series associated to the Euler totient function and the Mobius function, respectively. It seems unnecessary at this moment to use the notation $\idd_{\bullet}$, but in Section~\ref{sec_Eisen} we generalize this notion for the other {\it dual characters} instead of $\idd_{\bullet}$, and then it would fit the current definitions. It coincides with the current definitions when the dual character is trivial. 

Now we are ready to introduce our second results providing the Fourier expansions of non-holomorphic Eisenstein series for $R(N).$

\begin{theorem}\label{thm_gfourier}
For $\alpha, \beta\in \C$ such that $q:=\alpha+\beta \in \C_{\Re>2}$ and $k:=\alpha-\beta \in 2\Z$, if $N>1$, the Fourier expansion of a non-holomorphic Eisenstein series $g_{N}(z,\z,\alpha,\beta)$ in $x$ is given by
\begin{align*}
    g_N(z,\z,\alpha,\beta)=\phi&_{N}(y,q,k) \\
    &+2(-1)^k\zeta(q)\left(\frac{2\pi}{\sqrt N}\right)^q\sum_{n \neq 0}\frac{|n|^{q-1}}{\Gamma(\frac{q}{2}+\sgn(n)k)}W\left(2\pi n \frac{y}{\sqrt N};\alpha,\beta\right) \\
    &\times \left(N^{-q/2}\cL(n,\idd_{N\bullet},q)+\frac{\cL(n,\idd_{\bullet},q)}{\cL_N(n,\idd_{\bullet},q)}\right)e^{2\pi i nx/\sqrt N},
\end{align*}
where 
\begin{align*}
    \phi&_{N}(y,q,k)\\
    &:=2+2(-1)^k N^{-q/2}\zeta(q)\left(\frac{2y}{\sqrt N}\right)^{1-q}\frac{2\pi\Gamma(q-1)}{\Gamma(\frac{q+k}{2})\Gamma(\frac{q-k}{2})}\left(N^{-q/2}\cL(0,\idd_{N\bullet},q)+\frac{\cL(0,\idd_{\bullet},q)}{\cL_N(0,\idd_{\bullet},q)}\right)
\end{align*}
and $W(n;\alpha,\beta)$ is the modified Whittaker $W$-function.
\end{theorem}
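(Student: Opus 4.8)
The plan is to adapt the classical computation of the Fourier expansion of the real-analytic Eisenstein series for $\SL_2(\Z)$, carefully tracking the two arithmetic modifications introduced by $R(N)$: the scaling of one lattice variable by $\sqrt N$, and the coprimality conditions $\gcd(Nm,n)=1$ (resp.\ $\gcd(m,Nn)=1$) encoded in the index sets $(\Z\times\Z)_{N,L}$ and $(\Z\times\Z)_{N,R}$. Since $g_N=g_{N,L}+g_{N,R}$, I would treat each part separately and add. First I would use the reparametrization $(\Z\times\Z)_{N,L}=\{(tm,tn):t>0,\ \gcd(Nm,n)=1\}$ (and likewise for $R$) to factor out the scaling variable $t$: because each summand is homogeneous of degree $-q$ in $(m,n)$, this pulls out a factor $\zeta(q)$ and reduces everything to the ``primitive'' sums over pairs with $\gcd(Nm,n)=1$ (resp.\ $\gcd(m,Nn)=1$). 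I would record here the asymmetry that persists throughout: for $N>1$ the set $(\Z\times\Z)_{N,R}$ contains no pair with $m=0$, whereas $(\Z\times\Z)_{N,L}$ contains $(0,n)$ for every $n\neq 0$, so the purely constant part of $\phi_N$ comes only from the $L$-piece.

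Next I would split each primitive sum into its $m=0$ and $m\neq 0$ parts. The $m=0$ part is independent of $x$ and, using the identity $(\sqrt N mz+n)^{-\alpha}(\sqrt N m\z+n)^{-\beta}=|{\cdot}|^{-2\beta}({\cdot})^{-k}$ with $k\in 2\Z$, contributes the leading term of $\phi_N$. For the $m\neq 0$ part I would detect the coprimality condition on the second variable by M\"obius inversion, writing $\mathbf{1}[\gcd(Nm,n)=1]=\sum_{d\mid \gcd(Nm,n)}\mu(d)$, so that the inner sum runs over $n$ in a single residue class (indeed over $d\Z$) and becomes amenable to Poisson summation. Applying Poisson summation in the second variable then produces a dual sum whose summands are the Fourier transform of the archimedean kernel $t\mapsto(\sqrt N mz+t)^{-\alpha}(\sqrt N m\z+t)^{-\beta}$.

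The archimedean input is the standard Fourier-transform evaluation
\begin{align*}
\int_{\R}(t+w)^{-\alpha}(t+\overline{w})^{-\beta}e^{-2\pi i\xi t}\,dt,
\end{align*}
with $w=\sqrt N m z$: the $\xi\neq 0$ frequencies yield the modified Whittaker function $W(2\pi\xi\,\Im(w);\alpha,\beta)$ together with the reciprocal gamma factor $1/\Gamma(\tfrac q2+\sgn(\xi)\tfrac k2)$ and the phase $e^{2\pi i\xi\,\Re(w)}$, while the $\xi=0$ frequency gives the beta-integral value responsible for the $y^{1-q}$ term of $\phi_N$ and its gamma factor $\Gamma(q-1)/(\Gamma(\tfrac{q+k}2)\Gamma(\tfrac{q-k}2))$. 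Tracing $\Re(w)=\sqrt N m x$ and $\Im(w)=\sqrt N m y$ through the Poisson frequencies $\xi=r/d$ shows that, after relabelling the running frequency by $n=Nmr/d$, the Whittaker argument and the exponential organize exactly into $W(2\pi n y/\sqrt N;\alpha,\beta)$ and $e^{2\pi i n x/\sqrt N}$, and the homogeneity factor $|\xi|^{q-1}$ produces $|n|^{q-1}$ out front; the same computation for the $R$-piece, where instead the variable $n$ carries the $\sqrt N$, produces the companion normalization with the extra $N^{-q/2}$.

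The genuinely delicate step --- and the one I expect to be the main obstacle --- is the arithmetic bookkeeping that converts the triple sum over $(m,d,r)$ subject to $d\mid Nm$ and $Nmr/d=n$ into the Dirichlet-series factors in the statement. Collecting, for fixed $n$, the coefficient of $e^{2\pi i n x/\sqrt N}$ requires evaluating inner sums over residue classes, which are precisely the Ramanujan sums $G(n,\idd_\bullet)$; summing these against $\mu(d)/d^{\,q}$ and over the remaining index then assembles the series $\cL(n,\idd_\bullet,q)$, its shifted variant $\cL(n,\idd_{N\bullet},q)$, and the local correction $\cL_N(n,\idd_\bullet,q)$. The crux is to show that the $L$-contribution collapses to $\cL(n,\idd_\bullet,q)/\cL_N(n,\idd_\bullet,q)$ while the $R$-contribution collapses to $N^{-q/2}\cL(n,\idd_{N\bullet},q)$ (and correspondingly for $\phi_N$ at $n=0$), which amounts to an Euler-product manipulation isolating the primes dividing $N$. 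Throughout I would justify the termwise Poisson summation and the interchanges of summation in the region $\Re(q)>2$, where all the series converge absolutely, so that the analytic continuation discussed elsewhere is not needed for the expansion itself.
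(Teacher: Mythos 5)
Your overall route is the paper's own: the paper likewise splits $g_N=g_{N,L}+g_{N,R}$, pulls out $\zeta(q)$ via the gcd-stratification, isolates the $m=0$ terms (present only in the $L$-piece, exactly as you observe), periodizes the second variable, and reduces to the Fourier expansion of the one-periodic kernel $\cg(z,\z,\alpha,\beta)$ (quoted from Maass), whose nonzero frequencies give the modified Whittaker functions with the reciprocal gamma factor and whose zero frequency gives the beta-type integral behind the $y^{1-q}$ term of $\phi_N$; the Ramanujan sums $G(n,\idd_{\bullet})$ then arise from reduced-residue exponential sums and assemble into the stated Dirichlet series. Your only methodological deviation is cosmetic: instead of quoting the expansion of $\cg$, you re-derive it inline by M\"obius inversion plus Poisson summation, which is the same computation (note only that with M\"obius done first, the inner residue sums are full geometric sums over $d\Z$, and the Ramanujan sums emerge after recombining the $\mu(d)$ factors, not at the point you claim).

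There is, however, one concrete error in the step you yourself call the crux: your attribution of the two Dirichlet-series factors to the $L$- and $R$-pieces is exactly swapped. In the $L$-piece the condition $\gcd(Nm,n)=1$ constrains $n$ modulo $Nm$, so with $m=c>0$ the periodization modulus is $Nc$; the exponential sums are $G(n,\idd_{Nc})$, and the $c$-sum gives $N^{-q}\sum_{c\geq 1}G(n,\idd_{Nc})c^{-q}=N^{-q}\cL(n,\idd_{N\bullet},q)$, so the shifted series comes from the $L$-piece, and it is the $L$-term that retains the residual $N^{-q/2}$ after the common normalization is pulled out. In the $R$-piece, $\gcd(m,Nn)=1$ forces $\gcd(m,N)=1$, so the modulus is $c=m$ restricted to $c$ coprime to $N$, and $\sum_{\gcd(c,N)=1}G(n,\idd_c)c^{-q}=\cL(n,\idd_{\bullet},q)/\cL_N(n,\idd_{\bullet},q)$ by the Euler product: the quotient comes from the $R$-piece. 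Your plan to extract $\cL(n,\idd_{\bullet},q)/\cL_N(n,\idd_{\bullet},q)$ from the $L$-sum would fail outright, since every modulus occurring there is divisible by $N$, so its Euler structure at primes $p\mid N$ is that of the shifted series and cannot be written as removing local factors from $\cL(n,\idd_{\bullet},q)$. Because the theorem only uses the sum of the two contributions, the final formula is unaffected, and honestly carrying out your own modulus computation would force the correction; but as stated, the two ``crux'' identities are individually false.
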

For the detailed description of the modified Whittaker $W$-function, see Section~\ref{sec_Eisen}.

\

The issue of the analytic continuation of $g_N$ or $G_{N,k}$ is a bit more delicate to handle than that of the case $N=1$. If $N>1$, the $L$-series appearing in the Fourier expansion of $g_N(z,\z,\alpha,\beta)$ is more complicated than one in the Fourier expansion of $g(z,\z,\alpha,\beta)$. It is an unresolved question that whether $g_N$ has the analytic continuation in $q$ to the whole plane $\C$ for any positive integer~$N$. However, it is possible to continue  the series analytically at least to the neighborhood of $q=0$, which admits a taylor expansion of $g_n(z,\z,\alpha,\beta)$ near $q=0$, or of $G_{N,k}(z,s)$ near $s=0$. See Section \ref{sec_Eisen}.

Let $N=N_1^2 N_2$, where $N_2$ be a square-free integer. By definition, it is obvious that the group $R(N)$ is contained in $R(N_2)$, so our main interest is focused on the case when $N$ is square-free. From now on, we assume that $N$ is square-free. We define a meromorphic function in $s$,
\begin{align*}
    f_{N,b}(s)&:=\frac{N^{-(s-1)-\frac{k}{2}}\prod_{p \mid N}\left(\left(\sum_{i=1}^{v_p(b)}(p-1)p^{i-1+i(2s+k-2)}\right)-p^{v_p(b)+(v_p(b)+1)(2s+k-2)}\right)+1}{\prod_{p \mid N}\zeta_p^{-1}(-2s-k+2)\zeta_p(-2s-k+1;v_p(b))},
\end{align*}
where $v_p$ is the $p$-valuation and $\zeta_p(s;v):=\sum_{n=0}^v p^{-ns}$, a truncated local zeta function. Note that when $v_p(b)=0$, i.e., $p \nmid b$, the summation part $\sum_{i=1}^{v_p(b)}(p-1)p^{i-1+v_p(b)(2s+k-2)}$ doesn't appear in $f_{N,b}(s)$. In particular, if $b=1$, $$f_N(s):=f_{N,1}(s)=\left((-1)^{\omega(N)}N^{(s-1)+\frac{k}{2}}+1\right)\prod_{p|N}(1-p^{2s+k-2})^{-1}.$$ Here $\omega(N)$ is the number of distinct prime factors of $N$. 

Define the following completed and the doubly-completed Eisenstein series for $R(N)$, respectively by
\begin{align*}
    \widehat{G}_{N,k}(z,s)&:=\left(\frac{\pi}{\sqrt N}\right)^{-s-\frac{k}{2}}\Gamma\left(s+\frac{k}{2}+\frac{|k|}{2}\right)f_N(s)G_{N,k}(z,s), \\
    \widetilde{G}_{N,k}(z,s)&:=\left(s+\frac{k}{2}\right)\left(s+\frac{k}{2}-1\right)\widehat{G}_{N,k}(z,s).
\end{align*}

Theorem~\ref{thm_gfourier} provides the Fourier expansion of $\widehat{G}_{N,k}$  via the relation between the functions $g_N(z,\z,\alpha,\beta)$ and $\widehat{G}_{N,k}(z,s)$.

\begin{corollary}\label{cor_Gfourier}
If $N>1$ is square-free, the Fourier expansion of a non-holomorphic Eisenstein series $\widehat{G}_{N,k}(z,s)$ is given by
\begin{align*}
   \widehat{G}_{N,k}(z,s)=& \frac{\Gamma(s+\frac{k}{2}+\frac{|k|}{2})}{\Gamma(s+\frac{k}{2})}\hat{\zeta}(2s+k)y^s\sqrt N^{s+\frac{k}{2}}f_N(s) \\&+(-1)^{k/2}\frac{\Gamma(s+\frac{k}{2})\Gamma(s+\frac{k}{2}+\frac{|k|}{2})}{\Gamma(s+k)\Gamma(s)}\hat{\zeta}(2-2k-2s)y^{1-k-s}f_N(s) \\
   &\qquad \times \left(\sqrt N^{1-s-\frac{k}{2}}\prod_{p|N}\frac{1-p^{-1}}{1-p^{-2s-k}}+\sqrt N^{-1+s+\frac{k}{2}}\prod_{p|N}\frac{1-p^{1-k-2s}}{1-p^{-2s-k}}\right) \\
   &+\sum_{n \neq 0}(-1)^{k/2}y^{-k/2}\left(\frac{\Gamma(s+\frac{k}{2}+\frac{|k|}{2})}{\Gamma(s+\frac{k}{2}+\sgn(n)\frac{k}{2})}|n|^{-s-\frac{k}{2}}\sigma_{2s+k-1}(n)W_{\sgn(n)\frac{k}{2},s+\frac{k-1}{2}}\left(4\pi |n| \frac{y}{\sqrt N}\right)\right. \\
   &\qquad \times \left.f_{N,n}(1-k-s)f_N(s)e^{2\pi i nx/\sqrt N}\right),
\end{align*}
where $\hat{\zeta}(s):=\pi^{-s/2}\Gamma\left(\frac{s}{2}\right)\zeta(s)$ is the completed Riemann zeta function.
\end{corollary}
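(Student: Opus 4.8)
The plan is to derive the Corollary directly from Theorem~\ref{thm_gfourier} by specialization and repackaging, so the argument is computational rather than conceptual. First I would put $\alpha=s+k$ and $\beta=s$, so that $q=\alpha+\beta=2s+k$ and the weight $\alpha-\beta$ equals $k$, and insert the resulting expansion of $g_N(z,\z,s+k,s)$ into the identity $G_{N,k}(z,s)=\tfrac12 y^s g_N(z,\z,s+k,s)$. Multiplying by the completion factor $\left(\pi/\sqrt N\right)^{-s-k/2}\Gamma(s+\tfrac{k}{2}+\tfrac{|k|}{2})f_N(s)$ that defines $\widehat G_{N,k}$, and recalling $\hat\zeta(w)=\pi^{-w/2}\Gamma(w/2)\zeta(w)$, converts the bare $\zeta$- and $\pi$-powers of Theorem~\ref{thm_gfourier} into the completed zeta values $\hat\zeta(2s+k)$ and $\hat\zeta(2-2k-2s)$ displayed in the statement.

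Next I would treat the constant and non-constant terms separately. For the constant term I would split $\phi_N(y,2s+k,k)$ into its two summands: the first produces, after multiplication by $\tfrac12 y^s$ and the completion factor, the $y^s$-term carrying $\hat\zeta(2s+k)$, while the second produces the $y^{1-k-s}$-term carrying $\hat\zeta(2-2k-2s)$, with the matching of Gamma quotients using the Legendre duplication and reflection formulas. For the non-constant terms I would invoke the definition of the modified Whittaker $W$-function from Section~\ref{sec_Eisen} to identify $W\!\left(2\pi n\,y/\sqrt N;\,s+k,s\right)$ with the classical $W_{\sgn(n)k/2,\,s+(k-1)/2}\!\left(4\pi|n|\,y/\sqrt N\right)$, which simultaneously explains the doubling of the argument to $4\pi|n|$ and the emergence of the Gamma quotient $\Gamma(s+\tfrac{k}{2}+\tfrac{|k|}{2})/\Gamma(s+\tfrac{k}{2}+\sgn(n)\tfrac{k}{2})$.

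The substantive work is the evaluation of the Dirichlet series attached to the Ramanujan sums. Using the classical Euler product I would record $\cL(b,\idd_\bullet,q)=\sigma_{1-q}(b)/\zeta(q)$ for $b\ne0$ (and $\cL(0,\idd_\bullet,q)=\zeta(q-1)/\zeta(q)$), together with the local evaluation of $\sum_{m\ge0}G(b,\idd_{p^m})p^{-mq}$ through the three cases $m\le v_p(b)$, $m=v_p(b)+1$, $m\ge v_p(b)+2$. Since $N$ is square-free, multiplicativity of the Ramanujan sums lets me write $\cL_N(b,\idd_\bullet,q)$ as the product of these local factors over $p\mid N$, so that $\cL(b,\idd_\bullet,q)/\cL_N(b,\idd_\bullet,q)$ is the complementary product over $p\nmid N$, and lets me unfold the shifted series $\cL(b,\idd_{N\bullet},q)$ into the remaining local data at $p\mid N$. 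Writing $\sigma_{1-q}(n)=|n|^{1-q}\sigma_{q-1}(n)=|n|^{1-2s-k}\sigma_{2s+k-1}(n)$ then cancels the factor $|n|^{q-1}$ from Theorem~\ref{thm_gfourier} and exposes the arithmetic factor $\sigma_{2s+k-1}(n)$.

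Finally I would show that the combination $N^{-q/2}\cL(n,\idd_{N\bullet},q)+\cL(n,\idd_\bullet,q)/\cL_N(n,\idd_\bullet,q)$ assembles, once the completed zeta and $|n|$-powers are extracted, into the factor $f_{N,n}(1-k-s)$ for $n\ne0$ (so that together with the global completion factor $f_N(s)$ it yields the product $f_{N,n}(1-k-s)f_N(s)$ of the statement), and into the two Euler products $\prod_{p\mid N}\frac{1-p^{-1}}{1-p^{-2s-k}}$ and $\prod_{p\mid N}\frac{1-p^{1-k-2s}}{1-p^{-2s-k}}$, weighted by $\sqrt N^{\pm(1-s-k/2)}$, for $n=0$. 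I expect this last reassembly to be the main obstacle. The target argument $1-k-s$ is forced by the identity $2(1-k-s)+k-2=-(2s+k)=-q$, so the exponents $p^{i(2s'+k-2)}$ in $f_{N,n}(s')$ at $s'=1-k-s$ reproduce exactly the powers $p^{-mq}$ appearing in the local sums $\sum_{m}G(n,\idd_{p^m})p^{-mq}$; the delicate point is then matching the truncated local zeta $\zeta_p(-2s-k+1;v_p(n))$ and the factor $\zeta_p^{-1}(-2s-k+2)$ in the denominator of $f_{N,n}$ against the three-case evaluation, which is where sign and exponent errors are easiest to make and must be tracked with care.
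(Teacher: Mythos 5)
Your proposal is correct and follows essentially the same route as the paper: the paper's proof consists precisely of specializing Theorem~\ref{thm_gfourier} at $q=2s+k$ and then invoking Lemma~\ref{lem_trivlocalL} (your three-case local evaluation of $\sum_m G(b,\idd_{p^m})p^{-mq}$), Proposition~\ref{prop_Liddformula} (your identity $\cL(b,\idd_\bullet,q)=\sigma_{1-q}(b)/\zeta(q)$, equivalent to the paper's $\sigma_{q-1}(b)/(\zeta(q)\,|b|^{q-1})$), and the unfolding formula \eqref{eqn_localLformula} for $\cL(b,\idd_{N\bullet},q)$, with square-freeness entering exactly where you place it. Your reassembly of the local data into $f_{N,n}(1-k-s)f_N(s)$, keyed to the identity $2(1-k-s)+k-2=-q$, is the same bookkeeping the paper leaves implicit, so there is no gap to flag.
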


As in the case for $\SL_2(\Z)$, if $N$ is a prime, then the completed Eisenstein series satisfies the following functional equation centered at line $s=\frac{1-k}{2}$.

\begin{theorem}\label{thm_funeq}
For a prime $p$, the completed and doubly-completed Eisenstein series satisfy the functional equation,
\begin{align*}
    \widehat{G}_{p,k}(z,1-k-s)=\widehat{G}_{p,k}(z,s) \quad \text{ and } \quad
    \widetilde{G}_{p,k}(z,1-k-s)=\widetilde{G}_{p,k}(z,s).
\end{align*}
\end{theorem}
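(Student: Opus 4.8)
The plan is to read the functional equation off the explicit Fourier expansion in Corollary~\ref{cor_Gfourier}, verifying that every Fourier coefficient, as a function of $y$ and $s$, is invariant under the involution $s\mapsto 1-k-s$. Granting this for $\widehat G_{p,k}$, the doubly-completed case is immediate: with $u:=s+\tfrac k2$ the involution becomes $u\mapsto 1-u$, and the prefactor $\left(s+\tfrac k2\right)\left(s+\tfrac k2-1\right)=u(u-1)=(1-u)(-u)$ is already invariant, so $\widetilde G_{p,k}(z,1-k-s)=\widetilde G_{p,k}(z,s)$ follows from the identity for $\widehat G_{p,k}$. Hence I focus on $\widehat G_{p,k}$.

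For the non-constant terms I would show each $n$-th coefficient is individually fixed. The factors $y^{-k/2}$ and $e^{2\pi i n x/\sqrt p}$ are untouched, so only the remaining factors need tracking. Under $s\mapsto 1-k-s$ the second index $\mu=s+\tfrac{k-1}2$ of the Whittaker function is sent to $-\mu$, so $W_{\kappa,\mu}=W_{\kappa,-\mu}$ leaves it unchanged; the pair $|n|^{-s-k/2}\sigma_{2s+k-1}(n)$ returns to itself via $\sigma_\nu(n)=|n|^{\nu}\sigma_{-\nu}(n)$; and the Gamma quotient $\Gamma\!\left(s+\tfrac k2+\tfrac{|k|}2\right)\big/\Gamma\!\left(s+\tfrac k2+\sgn(n)\tfrac k2\right)$ is invariant by the reflection formula together with $k\in 2\Z$, which gives $\sin\pi(s+k)=\sin\pi s$. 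After these cancellations, invariance of the $n$-th term reduces to the single identity $f_{p,n}(s)/f_p(s)=f_{p,n}(1-k-s)/f_p(1-k-s)$.

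For the constant term the two summands carry $y^s$ and $y^{1-k-s}$, which the involution interchanges, so I would check that their coefficients are swapped. The inputs are again $\hat\zeta(w)=\hat\zeta(1-w)$ (interchanging the two completed zeta factors), the prime-level simplification $f_p(s)=\bigl(1+p^{s-1+k/2}\bigr)^{-1}$, and a direct match of the surviving Gamma quotients against the bracketed factor $\bigl(\sqrt p^{\,1-s-k/2}\tfrac{1-p^{-1}}{1-p^{-2s-k}}+\sqrt p^{\,-1+s+k/2}\tfrac{1-p^{1-k-2s}}{1-p^{-2s-k}}\bigr)$ and $\sqrt p^{\,s+k/2}$; a short computation confirms this already in the weight-zero case. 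The genuine obstacle, common to both the constant and the non-constant terms, is the family of finite rational-function identities in $p^{s}$ forced by the completion factors: concretely, the invariance of $f_{p,n}/f_p$ under $s\mapsto 1-k-s$ for $p\mid n$, where the truncated local zeta $\zeta_p(-2s-k+1;v_p(n))$ and the finite numerator sum of $f_{p,n}$ do not collapse (for $p\nmid n$ one checks $f_{p,n}=f_p$, so the ratio is $1$). Setting $X=p^{2s+k-2}$, the involution acts by $X\mapsto p^{-2}X^{-1}$, and I expect verifying these identities to be the only laborious step; once they are in hand, the functional equation for $\widehat G_{p,k}$, and hence for $\widetilde G_{p,k}$, follows.
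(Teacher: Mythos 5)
Your proposal is correct and follows essentially the same route as the paper: the paper likewise reads the functional equation off the Fourier expansion in Corollary~\ref{cor_Gfourier}, shows the two constant-term summands (carrying $y^s$ and $y^{1-k-s}$) are interchanged under $s\mapsto 1-k-s$ via $\hat{\zeta}(w)=\hat{\zeta}(1-w)$ and the Legendre relation (your Gamma-quotient bookkeeping), checks each non-constant coefficient is individually fixed, and reduces everything to the $n$-independence of the ratio $f_{p,n}(1-k-s)/f_{p,n}(s)$. The one identity you flag as the remaining laborious step is precisely the paper's Lemma~\ref{lem_psi}, where a short computation shows $\psi_{p,b}(s)=f_{p,b}(1-k-s)/f_{p,b}(s)=\bigl(1+p^{s+\frac{k}{2}-1}\bigr)/\bigl(1+p^{-s-\frac{k}{2}}\bigr)$ for every nonzero $b$ (including $p\mid b$), so your plan closes exactly as you expect.
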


\subsection{Roadmap}

In Section \ref{sec_RN} we prove that the group $R(N)$ is finitely generated. Most group-theoretical properties of $R(N)$, including Theorem~\ref{thm_ell} and the fact that $R(N)$ coincides with the Fricke group $\Gamma_0^+(N)$ in the low level $N=2,3$, are given in this section. We derive Corollary~\ref{cor_genusthm} using the general and classical theorem on the theory of Fuchsian groups and we exhibit a few examples with $g_p=0$.

Section \ref{sec_Eisen} is devoted to developing the theory of $L$-series for the Gauss sum of dual characters. This leads us to the analytic continuation of the non-holomorphic Eisenstein series. In this section we prove that functions $G_{k,N,L}$ and $G_{k,N,R}$ satisfy the modular invariant condition for $R(N)$, and prove Theorem~\ref{thm_gfourier} and Theorem~\ref{thm_funeq}.

In section \ref{sec_polyhar}, we investigate the structure of polyharmonic Maass forms for $R(p)$. More preciesely, we prove that any polyharmonic Maass forms can be written as the sum of the Taylor coefficients of the non-holomorphic Eisenstein series and cusp forms. Most argument used in this section is in parallel with the argument in \cite{MR357462} once if we define and construct several properties on the non-holomorphic Eisenstein series. Thus we state our results briefly, and in most cases the proofs in this section are is omitted. For a general vector-valued version of the theory, one may see \cite{MR2097357} which gives the systematic investigation on the harmonic weak Maass forms.

\

\section{Groups $R(p)$}\label{sec_RN}

Section \ref{sec_RN} is devoted to developing the fundamental properties of the groups $R(p)$ and prove Theorem \ref{thm_ell}.

\subsection{Generating sets of $R(p)$}
In this subsection we first show that the group $R(p)$ is finitely generated.

\begin{lemma}\label{lem_gen}
Let $p>2$ be a prime. The group $R(p)$ is generated by matrices $\{T_p, \omega_1\} \cup \{s_p(n) \in \SL_2(\R): -\frac{p+1}{2} \leq n \leq \frac{p+1}{2},\text{ } n \neq 0 \}$, where 
\begin{align*}
    T_p:=\begin{pmatrix}1 & \sqrt p \\ 0 & 1 \end{pmatrix}, \qquad
    \omega_1 :=\begin{pmatrix}0 & -1 \\ 1 & 0 \end{pmatrix}, \qquad
    s_p(n):=\begin{pmatrix} \frac{n \hat{n}+1}{p}\sqrt p & n \\ \hat{n} & \sqrt p \end{pmatrix}
\end{align*}
and $\hat{n}$ is an integer satisfying $n \hat n \equiv -1 \pmod{p}$. For $p=2$, the group $R(p)$ is generated by $\{T_2,\omega_1,s_2(\pm 1)\}$.
\end{lemma}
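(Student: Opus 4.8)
The plan is to show first that the proposed matrices all lie in $R(p)$, and then that they generate the whole group. The matrices $T_p$ and $s_p(n)$ evidently belong to $R(p)^+$ (their off-diagonal entries carry the factor $\sqrt p$ and the diagonal entries are of the form $(\text{integer})\sqrt p$, with determinant $1$ by the congruence $n\hat n \equiv -1 \pmod p$), while $\omega_1$ lies in $R(p)^-$. So membership is the easy direction and amounts to a determinant check plus verifying the $\sqrt p$-placement in \eqref{RN+-def}.

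For the generation claim I would argue that it suffices to generate $R(p)^+$ together with a single element of $R(p)^-$, since $R(p)^- = \omega_1 \cdot R(p)^+$ (multiplying by $\omega_1$ swaps the two cosets, as one checks directly from the shapes in \eqref{RN+-def}). Thus the heart of the matter is to generate $R(p)^+$ from $T_p$, $\omega_1$, and the $s_p(n)$. The strategy here is a Euclidean-type reduction on a generic element $\gamma = \begin{pmatrix} a & b\sqrt p \\ c\sqrt p & d \end{pmatrix} \in R(p)^+$. Left- or right-multiplying by powers of $T_p$ performs the translation $a \mapsto a + c\cdot p$ on the numerator data (equivalently reduces $b$ modulo the relevant modulus), and multiplying by $\omega_1$ interchanges rows with a sign so that one can always divide the larger ``lower-left'' quantity into the upper one. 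The role of the $s_p(n)$ for $-\frac{p+1}{2}\le n \le \frac{p+1}{2}$, $n\neq 0$, is precisely to supply the residue-class representatives $n \bmod p$ that $T_p$ alone (which only moves things by multiples of $p$) cannot reach: since $\det s_p(n)=1$ forces $\hat n$ to be the inverse of $-n$ modulo $p$, these matrices realize every nonzero residue in the lower-left entry. Iterating the reduction strictly decreases a suitable positive integer height (e.g. $|c|$ or $\max(|a|,|c|)$), so the process terminates at a matrix one recognizes as a product of the generators (typically a power of $T_p$ or the identity).

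The main obstacle I expect is bookkeeping the reduction so that the height genuinely decreases at each step and that the residue classes are covered \emph{exactly} by the chosen range of $n$; in particular one must check that the endpoints $n=\pm\frac{p+1}{2}$ are needed (and that the excluded value $n=0$ is indeed unreachable and unnecessary, being replaced by $T_p$ and $\omega_1$). Care is also required because $R(p)$ is a group of $\pm$-cosets, so at several points a reduction will naturally produce $\pm\gamma$ rather than $\gamma$, and one should confirm that $-I$ itself is a product of the listed generators (e.g. $\omega_1^2 = -I$). Finally, for the degenerate case $p=2$ the residue range collapses to $n=\pm 1$, so one only verifies that $\{T_2,\omega_1,s_2(\pm1)\}$ carries out the same Euclidean reduction, which is a short direct check.
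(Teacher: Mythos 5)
Your overall strategy---a Euclidean-type descent in which $T_p$ and $\omega_1$ perform continued-fraction steps and the $s_p(n)$ repair the failure of $T_p$ to move entries by anything other than multiples of $p$---is the same in spirit as the paper's proof, but your sketch contains one outright error and omits the step that is the actual heart of the lemma. The error: $s_p(n)$ does \emph{not} lie in $R(p)^+$. Its diagonal entries are integer multiples of $\sqrt p$ and its off-diagonal entries are integers, so $s_p(n)\in R(p)^-$ by \eqref{RN+-def}. Since multiplication by $\omega_1$ or by any $s_p(n)$ toggles between $R(p)^+$ and $R(p)^-$ (the decomposition behaves like a $\Z/2$-grading), you cannot run the reduction inside $R(p)^+$ as proposed. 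The paper instead formulates the entire induction for matrices of $R(p)^-$-shape $\begin{pmatrix} a\sqrt p & b \\ c & d\sqrt p\end{pmatrix}$, performs strong induction on $|a|$ quantified over all admissible $c$ with $|c|\le\frac{p}{2}|a|$ and $\gcd(pa,c)=1$, and uses the auxiliary observation that two group elements with the same first column differ by a right power of $T_p$---a step you also need but do not mention, since any reduction procedure only produces \emph{some} matrix with a prescribed column, not the particular $\gamma$ you started from.

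More seriously, your termination claim is exactly the unproven point, and the heuristic you offer for it (that the $s_p(n)$ ``realize every nonzero residue in the lower-left entry'') misdescribes the mechanism, so following it would fail. The naive alternation genuinely stalls: from the $R(p)^+$-shape one can only reduce $a$ modulo $p|c|$, and from the $R(p)^-$-shape only modulo $|a|$, so the height $\max(|a|,|c|)$ stops decreasing in a middle regime---in the paper's normalization, when $2a\le |\hat c|\le \frac{p}{2}a$, where $\hat c$ is the representative of the inverse of $-c$ modulo $p$ chosen with $|\hat c|\le\frac{p}{2}a$. (This stall is unavoidable: $T_p$ and $\omega_1$ alone generate a proper Hecke-type subgroup, so no bookkeeping can make them suffice.) What closes the gap is a \emph{size}, not residue, argument: for such $\hat c$ there is an integer $n$ with $1\le n\le\frac{p+1}{2}$ and $(n-1)a<|\hat c|<(n+1)a$, and right multiplication by $s_p(\pm n)$ with the appropriate sign replaces the relevant entry by $\hat c-\sgn(\hat c)na$, whose absolute value is strictly less than $a$, letting the strong induction apply after an $\omega_1$-swap. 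This interval-covering computation is precisely what dictates the range $|n|\le\frac{p+1}{2}$ in the statement (the intervals must cover $|\hat c|$ up to $\frac{p}{2}a$); your proposal flags this bookkeeping as ``the main obstacle'' but does not resolve it, and the residue-coverage picture gives no reason why the height should decrease there.
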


It is possible to choice several $\hat{n}$ for each $n$, but regardless of the choice of $\hat{n}$ it generates the same group $R(p)$.

\begin{proof}
Let $p>2$ and denote  by $F$ the group generated by $T_p, \omega_1$, and $s_p(n)$'s. Let 
\begin{align*}
    F_{col}^-:=\Big\{(a,c) \in \Z^2 : &\text{ }|c| \leq \frac{p}{2}|a|, \text{ } \gcd(pa,c)=1, \\
    & \left.\text{ and there exist $b, d \in \Z$ such that }\begin{pmatrix} a\sqrt p & b \\ c & d \sqrt p\end{pmatrix} \in F \right\},
    \end{align*}
    \begin{align*}
    F^-:=\left\{a \in \Z : (a,c) \in F_{col}^- \text{ for any } c \text{ with } |c| \leq \frac{p}{2}|a|, \text{ } \gcd(pa,c)=1\right\}.
\end{align*}
Note that if $a \in F^-$ then $-a \in F^-$, so we may assume that $a>0$. In order to prove the assertion, we prove only  that 
\begin{align}\label{claim_F^-}
    \text{if any integer $m$ with $|m|<a$ belongs to $F^-$, then so is $a$.}
\end{align}
Indeed, this claim means that $\Z \subseteq F^-$. Note that the group $F$ is a subgroup of $\SL_2(\R)$. Thus if $a,b,c$ are given and $\begin{pmatrix} a\sqrt p & b \\ c & d \sqrt p\end{pmatrix} \in F$, an integer $d$ is uniquely determined. Suppose $(a_0,c_0) \in F_{col}^-$. Pick any integer $b_0$ such that $\begin{pmatrix} a_0\sqrt p & b_0 \\ c_0 & d_0 \sqrt p\end{pmatrix} \in F$, where $d_0$ is an integer determined by $a_0,c_0$ and $b_0$. Let $\begin{pmatrix} a_0\sqrt p & b \\ c_0 & d \sqrt p\end{pmatrix}$ be an arbitrary one belongs to $F$ with given the first column $(a_0\sqrt p,c_0)^t$. Then $pa_0d-bc_0=1$, so $b \equiv b_0 \pmod{pa_0}$. Write $b=pam+b_0$ for an integer $m$, then
\begin{align*}
    \begin{pmatrix} a_0\sqrt p & b \\ c_0 & d \sqrt p\end{pmatrix}=\begin{pmatrix} a\sqrt p & b_0 \\ c & d_0 \sqrt p\end{pmatrix}T_p^m.
\end{align*}
Hence $\Z = F^-$ implies $R(p)^- \subseteq F$. The set $R(p)^+$ can be written as $R(p)^+=\omega_1 R(p)^-$, so it follows that $R(p) \subseteq F$.

Let's prove Claim \eqref{claim_F^-}. Suppose that any integer with absolute value less than $a$ belongs to $F^-$. To prove $a\in F^-$, we show that $(a,c) \in F_{col}^-$ for any $c$ such that $\gcd(a,c)=1$ and $|c| \leq \frac{p}{2}a$. Use the notation $\hat{c}$ to mean that it satisfies $c\hat c \equiv -1 \pmod{p}$ and $|\hat{c}| \leq \frac{p}{2}a$. We say that $(a,c) \in \Z^2$ is  {\it properly  reduced} if there exists an integer $m \in \Z$ such that $|a+\hat{c}m|<a$, or equivalently, a matrix
\begin{align*}
    \begin{pmatrix} a\sqrt p & \hat{c} \\ c & d \sqrt p\end{pmatrix}\begin{pmatrix} \sqrt p & 1 \\ 1 & 0 \end{pmatrix}^m=\begin{pmatrix} a'\sqrt p & \hat{c} \\ c' & d \sqrt p\end{pmatrix}
\end{align*}
satisfies $|a'|<a$. Since $\begin{pmatrix} a'\sqrt p & \hat{c} \\ c' & d \sqrt p\end{pmatrix} \in F$, it is obvious that if $(a,c)$ is properly reduced, then $(a,c) \in F_{col}^-$. Therefore it is enough to show that any non-properly reduced pair $(a,c)$ is also contained in $F_{col}^-$.

Let $(a,c)$ be a non-properly reduced pair of integers with $|c| \leq \frac{p}{2}a$, $\gcd(pa,c)=1$. Since the inequality $|\hat{c}| < 2a$ implies $|a-\sgn(\hat{c})\hat c | <a$, we have $2a \leq |\hat{c}| \leq \frac{p}{2}a$, so there is an integer $n \in \{1, 2, \cdots, \frac{p+1}{2}\}$ with $(n-1)a < |\hat{c}| < (n+1)a.$ For such an integer $n$, we have $|an+|\hat{c}||=|\sgn(\hat{c})an+\hat{c}|<a$ and
\begin{align*}
    \begin{pmatrix} a\sqrt p & \hat{c} \\ c & d \sqrt p\end{pmatrix}s_p(\sgn(\hat{c})n)=\begin{pmatrix} * & (\sgn(\hat{n})an+\hat{c})\sqrt p \\ * & *\end{pmatrix}=A.
\end{align*}
By inductive hypothesis we have $A\omega_1 \in F$, so $\begin{pmatrix} a\sqrt p & \hat{c} \\ c & d \sqrt p\end{pmatrix}=A s_p(\sgn(\hat{c})n)^{-1} \in F$ which implies $(a,c) \in F_{col}^-$.

One can obtain the lemma for $p=2$ through the same  argument as above.
\end{proof}

By Lemma \ref{lem_gen}, it is clear that the Hecke triangle groups $\Gamma_{(2,4,\infty)}$ and $\Gamma_{(2,6,\infty)}$ coincide with groups $R(2)$ and  $ R(3)$, respectively.

\subsection{Elliptic points and cusps}

In this subsection we prove Theorem \ref{thm_ell}. Recall that a point $z \in \Ha $ is called an elliptic point for a Fuchsian group $\Gamma$ if its stabilizer group $\Gamma_z$ of the action $\Gamma$ on $\Ha$ has a nontrivial element. A nontrivial  element in $\Gamma_z$ always has trace whose absolute value is smaller than 2, and it is called an elliptic element. If two points $z_1, z_2 \in \Ha$ are $\Gamma$-equivalent, i.e. there exists a matrix $\gamma \in \Gamma$ such that $\gamma z_1=z_2$, then their stabilizer groups are conjugate to each other. Thus we also use the terminology `elliptic point' by meaning the equivalence class containing that point. 

For the Hecke triangle groups $\Gamma_{(2,m,\infty)}$, in particular for $m=4,6$, the set of equivalence classes of the elliptic points are known to be exactly $\{i, \frac{1+i}{\sqrt 2}\}$ and $\{i,\frac{\sqrt 3 +i}{2}\}$, respectively (see, for instance, \cite{MR3228299}). Therefore, unless specifically stated, we assume $p\geq 5$.

\begin{lemma}\label{lem_Fermat}
Let $c$ be a positive integer whose prime factors are $2$ or of the form $4r+1$ for some $r \in \Z$. Let $a$ and $a_0$ be integers satisfying
\begin{align*}
    \begin{cases}
a \equiv a_0 & \pmod{p}, \\
a^2 \equiv -1 & \pmod{p}, \\
\frac{a^2+1}{p} \equiv 0 & \pmod{c}.
\end{cases}
\end{align*}
There exists $(x,y) \in \Z^2$ such that
\begin{enumerate}[\normalfont(a)]
    \item $c=x^2+y^2$, \text{ and }
    \item $c$ divides $x-ay$ and $\frac{a-a_0}{p}x+\frac{a_0a+1}{p}y$.
\end{enumerate}
\end{lemma}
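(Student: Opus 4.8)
The plan is to reduce both parts to a single representation problem over the Gaussian integers. First I would record the consequence of the hypotheses that $c \mid a^2+1$: indeed $\frac{a^2+1}{p} \equiv 0 \pmod c$ says $pc \mid a^2+1$, so in particular $a^2 \equiv -1 \pmod c$. (The assumption on the prime factors of $c$ is consistent with this and is exactly what guarantees that $c$ is a sum of two squares, which the construction below recovers for free.) I would also note that the two quantities appearing in (b) are genuine integers: $\frac{a-a_0}{p} \in \Z$ because $a \equiv a_0 \pmod p$, and $\frac{a_0a+1}{p}\in\Z$ because $a_0 a \equiv a^2 \equiv -1\pmod p$.

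The heart of the argument is to produce $(x,y)$ satisfying (a) together with the first divisibility in (b), namely $c \mid x - ay$. I would work in $\Z[i]$ and consider the ideal $\mathfrak a = (c,\,a+i)$. Computing the quotient, $\Z[i]/\mathfrak a \cong \Z[X]/(X^2+1,\,c,\,X+a) \cong \Z/(c,\,a^2+1) = \Z/c\Z$ using $c\mid a^2+1$; hence the ideal norm of $\mathfrak a$ equals $c$. Since $\Z[i]$ is a principal ideal domain, $\mathfrak a = (z)$ for some $z = x+iy$, and then $x^2+y^2 = z\bar z = c$, which is exactly (a). Because $a+i \in \mathfrak a = (z)$ we have $z \mid a+i$, so $c = z\bar z$ divides $(a+i)\bar z$ in $\Z[i]$. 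Expanding $(a+i)\bar z = (ax+y) + i(x-ay)$ and reading off the imaginary part gives $c \mid x - ay$, the first divisibility in (b). (The choice $(c,\,a+i)$ rather than $(c,\,a-i)$ is what produces the sign $x-ay$ rather than $x+ay$.)

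Finally, the second divisibility in (b) is automatic. Writing $A = \frac{a-a_0}{p}$ and $B = \frac{a_0 a + 1}{p}$, reduction modulo $c$ together with $x \equiv ay \pmod c$ give $Ax + By \equiv (Aa+B)y \pmod c$, while $Aa + B = \frac{(a-a_0)a + a_0 a + 1}{p} = \frac{a^2+1}{p} \equiv 0 \pmod c$ by hypothesis. Hence $c \mid Ax + By$, which completes (b).

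I expect the only real obstacle to be the bookkeeping in the ideal-theoretic step: verifying that the norm of $\mathfrak a$ is $c$ and extracting the congruence with the correct sign. An alternative that avoids $\Z[i]$ entirely is to apply Minkowski's convex body theorem to the index-$c$ lattice $L = \{(x,y) \in \Z^2 : x \equiv ay \pmod c\}$. Every vector of $L$ satisfies $x^2 + y^2 \equiv (a^2+1)y^2 \equiv 0\pmod c$, and since $L$ has covolume $c$, Minkowski produces a nonzero vector with $x^2+y^2 < 2c$, forcing $x^2+y^2 = c$. This yields (a) and $c \mid x-ay$ simultaneously, after which (b) follows by the same computation as above.
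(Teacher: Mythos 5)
Your proof is correct, and although it lives in the same arithmetic setting as the paper's (finding a Gaussian divisor of $a+i$ of norm $c$ in $\Z[i]$), the execution is genuinely different and cleaner. The paper argues by explicit factorization: it first replaces $a$ by $a+rpc$ to control parity so that $\gcd_{\Z[i]}(a+i,a-i)=1$, factors $c=\prod_i z_i^{e_i}\overline{z_i}^{e_i}$ into Gaussian primes, deduces $z_i^{e_i}\mid (a+i)$ from the coprimality, and takes $x+iy=\prod_i z_i^{e_i}$ --- and it must treat even $c$ separately through the ramified prime $1+i$. Your ideal-theoretic formulation, $\mathfrak{a}=(c,a+i)$ with $\Z[i]/\mathfrak{a}\cong \Z/c\Z$ (a computation that needs only $c\mid a^2+1$, which you correctly extract from $pc\mid a^2+1$), collapses both the parity adjustment and the odd/even case split into a single uniform statement, and the PID property hands you the generator $z=x+iy$ whose factorization the paper builds by hand. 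Your extraction of $c\mid (x-ay)$ from $c=z\bar z\mid (a+i)\bar z$ is exactly the paper's closing computation $(a+i)(x-iy)=(ax+y)+i(x-ay)$, and your reduction of the second divisibility in (b) to the first --- via $Aa+B=\frac{a^2+1}{p}\equiv 0 \pmod{c}$ --- is the same opening reduction the paper performs (there phrased by writing $a=a_0+np$). Your Minkowski alternative is also correct and is the only truly $\Z[i]$-free route: the lattice $\{(x,y)\in\Z^2 : x\equiv ay \pmod{c}\}$ has covolume $c$ and all its vectors satisfy $c\mid x^2+y^2$, so a nonzero vector with $x^2+y^2<2c$ forces $x^2+y^2=c$. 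What your main argument buys over the paper's is uniformity (no case analysis, no adjustment of $a$); it also makes visible, as you note, that the hypothesis on the prime factors of $c$ is redundant once $c\mid a^2+1$ is in hand, since the paper only uses that hypothesis to guarantee the splitting of the Gaussian primes dividing $c$.
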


\begin{proof}
Let $a=a_0+np$, for $n \in \Z$. Then for any integers $x$ and $y$,
\begin{align*}
    \frac{a-a_0}{p}x+\frac{a_0a+1}{p}y=nx-nay+\frac{a^2+1}{p}y.
\end{align*}
Since  $p \mid a^2+1$ and $c \mid \frac{a^2+1}{p}$,  it is enough to show that there exists $(x,y) \in \Z^2$ such that
\begin{align*}
    c=x^2+y^2 \text{ and } c \mid (x-ay).
\end{align*}
Suppose $c$ is not divisible by 2. Note that we can replace $a$ with $a':=a+rpc$ for any $r \in \Z.$ Since $c$ is odd, we may assume that $a$ is even number. By considering $a$ as a Gaussian integer i.e., $a \in \Z[i]$, we have
\begin{align}\label{eq:gcd1}
    \gcd_{\Z[i]}(a+i,a-i)=\gcd_{\Z[i]}(a+i,2i)=\gcd_{\Z[i]}(a+i,2)=1,
\end{align}
so $a+i$ and $a-i$ are coprime in $\Z [i]$.

Note that $c$ can be factorized into
\begin{align*}
    c=\prod_{i=1}^n z_i^{e_i}\overline{z_i}^{e_i}
\end{align*}
in $\Z [i]$, where each $z_i$ is a prime in $\Z [i]$ and $\overline{z_i}$  is its complex conjugate, which is also a prime. Since $c$ divides $a^2+1=(a+i)(a-i)$, we may assume that $z_i \mid (a+i)$ for all $i=1,2,\ldots,n$, hence  $z_i^{e_i} \mid (a+i)$ by \eqref{eq:gcd1}. Let $x:=\Re(\prod_{i=1}^n z_i^{e_i})$, $y:=\Im(\prod_{i=1}^n z_i^{e_i}) \in \Z$. Then we have $c=x^2+y^2$ and
\begin{align*}
    c \mid (a+i)(x-iy)=(ax+y)+i(x-ay).
\end{align*}
Since $c$ is an integer, we have $c \mid (x-ay)$.

Next suppose $c$ is even. Since $a^2+1 \equiv 0 \pmod{c}$, we have $4 \nmid c$. If $a$ is even, then we can apply the same argument as in the case when $2 \mid c$. If  $a$ is an odd integer, then 
\begin{align*}
    \gcd_{\Z[i]}(a+i,a-i)=\gcd_{\Z[i]}(a+i,2)=\gcd_{\Z[i]}(1+i,2)=1+i,
\end{align*}
so we can write $a+i=(1+i)\prod_{i=1}^n w_i^{f_i}$ for primes $w_i \in \Z[i].$ Write $c=(1+i)(1-i)z_c \overline{z_c}$ for $z_c \in \Z[i]$. By the same reason as above, we may assume $z_c \mid \prod_{i=1}^n w_i^{f_i}.$ Define $x,y \in \Z$ so that $x+iy=(1+i)z_c$, then it satisfies the assertion.
\end{proof}

\begin{lemma}\label{lem_Fermat2}
Let $c$ be a positive integer whose prime factors are $3$ or of the form $3r+1$ for some $r \in \Z$. Let $a$ be an integer satisfying
\begin{align*}
a^2-a+1 \equiv 0 \pmod{c}.
\end{align*}
There exists $(x,y) \in \Z^2$ such that
\begin{enumerate}[\normalfont(a)]
    \item $c=3x^2+y^2$, \text{ and }
    \item c divides $(2a-1)x+y$.
\end{enumerate}
\end{lemma}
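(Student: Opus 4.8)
The plan is to reduce everything to unique factorization in the ring of Eisenstein integers $\Z[\rho]$, where $\rho=e^{2\pi i/3}=\frac{-1+\sqrt{-3}}{2}$ satisfies $\rho^2+\rho+1=0$. The key algebraic identity is $a^2-a+1=(a+\rho)(a+\rho^2)$, so the hypothesis $c\mid a^2-a+1$ says exactly $c\mid(a+\rho)(a+\rho^2)$ in $\Z[\rho]$. The subtlety, and the main point of departure from Lemma~\ref{lem_Fermat}, is that the target form $3x^2+y^2$ is the norm form not of $\Z[\rho]$ but of the non-maximal order $\Z[\sqrt{-3}]=\Z[2\rho]$ of index $2$; since this order is not a unique factorization domain we cannot factor in it directly. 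So I would first produce an element of the correct norm inside the maximal order $\Z[\rho]$ (which is a PID), and only afterwards push it into the suborder.

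Concretely, first I would build $\gamma\in\Z[\rho]$ with $N(\gamma)=c$ and $\gamma\mid(a+\rho)$. Because $(a+\rho)-(a+\rho^2)=\rho-\rho^2=\sqrt{-3}$, any common divisor of $a+\rho$ and $a+\rho^2$ divides $\sqrt{-3}$, i.e.\ is supported at the ramified prime $\lambda:=1-\rho$ (with $N(\lambda)=3$). Under the hypotheses each prime factor $q$ of $c$ is either $3$ or $q\equiv 1\pmod 3$, hence splits in $\Z[\rho]$; for such a split prime exactly one of the two conjugate primes above $q$ divides $a+\rho$, and comparing valuations in $c\mid(a+\rho)(a+\rho^2)$ shows it divides $a+\rho$ to at least the exponent appearing in $c$. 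Collecting these prime powers gives the split part of $\gamma$. For the ramified prime I would use that $9\nmid a^2-a+1$ (a direct check modulo $9$), so $v_3(c)\le 1$; when $3\mid c$ one has $a\equiv 2\pmod 3$, hence $\lambda\mid a+\rho$, and I include a single factor $\lambda$. This yields $\gamma$ with $N(\gamma)=c$ and $\gamma\mid(a+\rho)$.

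Finally I would move $\gamma$ into $\Z[\sqrt{-3}]$ and read off the two conclusions. Writing $\gamma=u+v\rho$, the $\rho$-coefficients of the associates $\gamma,\rho\gamma,\rho^2\gamma$ are $v,\,u-v,\,-u$, which are not all odd, so some unit multiple $\gamma'$ has even $\rho$-coefficient and thus lies in $\Z[\sqrt{-3}]$; write $\gamma'=y+x\sqrt{-3}$. Multiplying by a unit preserves both $N(\gamma')=c$ and $\gamma'\mid(a+\rho)$, and $N(\gamma')=y^2+3x^2=c$ gives~(a). For~(b), since $\gamma'\mid(a+\rho)$ and $c=\gamma'\overline{\gamma'}$ we get $c\mid(a+\rho)\overline{\gamma'}$; using $a+\rho=\frac{1}{2}\big((2a-1)+\sqrt{-3}\big)$ a short computation gives $(a+\rho)\overline{\gamma'}=(ay-ax+2x)+(y-(2a-1)x)\rho\in\Z[\rho]$, whose $\rho$-coefficient must therefore be divisible by $c$, i.e.\ $c\mid y-(2a-1)x$. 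Replacing $x$ by $-x$ (which leaves $3x^2+y^2$ unchanged) turns this into $c\mid(2a-1)x+y$, giving~(b). I expect the only genuinely delicate steps to be the bookkeeping that produces $\gamma$ of exact norm $c$ dividing $a+\rho$ (especially at the ramified prime $3$), and the verification that an associate of $\gamma$ always lands in the index-$2$ suborder $\Z[\sqrt{-3}]$.
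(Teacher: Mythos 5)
Your proof is correct and takes essentially the same route as the paper: the paper disposes of this lemma in one line by rerunning the last part of the proof of Lemma~\ref{lem_Fermat} in the Eisenstein integers $\Z[\zeta_3]$, using unique factorization there and the identity $a^2-a+1=(a+\zeta_3)(a+\overline{\zeta_3})$, exactly as you do. Your extra steps --- the valuation bookkeeping at the ramified prime $3$ (via $9\nmid a^2-a+1$) and, in particular, choosing the associate of $\gamma$ with even $\rho$-coefficient so that it lands in the index-$2$ suborder $\Z[\sqrt{-3}]$ and yields the form $3x^2+y^2$ rather than the norm form $u^2-uv+v^2$ --- correctly fill in details that the paper's terse reduction leaves implicit.
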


\begin{proof}
The proof is the same as the last part of the proof of Lemma \ref{lem_Fermat} except for using the ring of integers $\Z[\zeta_3]$ of $\mathbb{Q}(\zeta_3)$ instead of the Gaussian integers $\Z[i]$, where $\zeta_3:=\frac{-1+\sqrt{-3}}{2}$. Note that $\Z[\zeta_3]$ is a UFD and $a^2-a+1=(a+\zeta_3)(a+\overline{\zeta_3})$ in $\Z[\zeta_3]$.
\end{proof}

Consider an elliptic point in $\mathrm{Ell}(p)$ that is represented by $z \in \Ha.$  For another point $z' \in \Ha$ which is $R(p)$-equivalent to $z$, the trace of its associated elliptic element is equal to the trace of $\gamma,$ where $\gamma \in R(p)_z.$ Indeed, if $\alpha \in R(p)$ sends $z$ to $z'$, then any element of $R(p)_{z'}$ can be written as $\alpha \gamma \alpha^{-1}$.

Let $\mathrm{Ell}^+(p)_t$ be the collection of elliptic points $[z] \in \mathrm{Ell}^+(p)$ such that there is an associated elliptic element $\gamma \in R(p)_z$ whose trace is $t$. Since $|t| < 2$ and $-I$ acts trivially on $\Ha$, for any $t \in \Z$ we have $\mathrm{Ell}(p)_t=\mathrm{Ell}(p)_{-t}$, and the set of elliptic points $\mathrm{Ell}^+(p)$ can be written as a  union
\begin{align}\label{eqn_ellunion}
    \mathrm{Ell}^+(p)=\mathrm{Ell}^+(p)_0 \cup \mathrm{Ell}^+(p)_{1}.
\end{align}

In fact, it turns out that there is no elliptic point whose trace is $0$ and $1$ at the same time, i.e., the union of (\ref{eqn_ellunion}) is disjoint. This is clear by the following two propositions.

\begin{proposition}\label{prop_Ell+tr0}
Let $p$ be a prime. If $p=2$ or $p=4r+1$ for some $r \in \Z$, then the set of elliptic points $\mathrm{Ell}^+(p)_0$ is a singleton. Otherwise, $\mathrm{Ell}^+(p)_0= \emptyset$.
\end{proposition}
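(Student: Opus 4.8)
The plan is to reduce the statement to the arithmetic of trace-$0$ elements of $R(p)^+$ and then to invoke Lemma~\ref{lem_Fermat}. First I would determine the trace-$0$ elliptic elements of $R(p)^+$. Any such element is $\gamma=\begin{pmatrix} a & b\sqrt p \\ c\sqrt p & -a\end{pmatrix}$ with $a^2+pbc=-1$; it satisfies $\gamma^2=-I$, so it has order $2$ in $\mathrm{PSL}_2(\R)$ and fixes $z=\frac{a+i}{c\sqrt p}\in\Ha$ (after adjusting the sign of $c$). Such a $\gamma$ exists if and only if $a^2\equiv-1\pmod p$ is solvable, i.e. $p=2$ or $p\equiv1\pmod4$. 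For $p\geq5$ every elliptic element of $R(p)^-$ has trace $0$ (since $|(a+d)\sqrt p|<2$ forces $a+d=0$), and $R(p)^+\cap(\pm R(p)^-)=\emptyset$; hence a point fixed by a trace-$0$ element of $R(p)^+$ is fixed by no element of $R(p)^-$, so it lies in $\mathrm{Ell}^+(p)$ and $\mathrm{Ell}^+(p)_0$ is exactly the set of $R(p)$-classes of these points. This already settles the case $p\equiv3\pmod4$: there is no trace-$0$ element of $R(p)^+$, while a point of $\mathrm{Ell}^+(p)_0$ would need a trace-$0$ stabiliser in $R(p)^+$ (one in $R(p)^-$ would place it in $\mathrm{Ell}^-(p)$), so $\mathrm{Ell}^+(p)_0=\emptyset$. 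The cases $p=2,3$ are read off from the elliptic points listed in the introduction.

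For $p\equiv1\pmod4$ existence is immediate: fixing a root $a_0$ of $a_0^2\equiv-1\pmod p$ with $0<a_0<p$, the point $z_0=\frac{a_0+i}{\sqrt p}$ is fixed by $\begin{pmatrix} a_0 & -\frac{a_0^2+1}{p}\sqrt p \\ \sqrt p & -a_0\end{pmatrix}\in R(p)^+$. The whole content is then uniqueness: every trace-$0$ elliptic point of $R(p)^+$ is $R(p)$-equivalent to $z_0$. To organise this I would pass to the variable $w:=\sqrt p\,z$, under which $R(p)^+$ acts through $\Gamma^0(p)=\{\begin{pmatrix} A & pB\\ C & D\end{pmatrix}\in\SL_2(\Z)\}$ and the fixed point becomes $w=\frac{a+i}{c}\in\Q(i)$. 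From $a^2+pbc=-1$ one gets $c\mid\frac{a^2+1}{p}$, so the prime factors of $c$ are $2$ or of the form $4r+1$, and $\frac{a^2+1}{p}\equiv0\pmod c$ — exactly the hypotheses of Lemma~\ref{lem_Fermat}.

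The reduction step is where Lemma~\ref{lem_Fermat} does the work. Applying it (with $a_0$ the chosen root) produces $(x,y)$ with $c=x^2+y^2$ and $c\mid(x-ay)$; a norm computation shows $\frac{a+i}{x+iy}=r+ik$ is a Gaussian integer, so $a+i=(x+iy)(r+ik)$ and the Gaussian integer $x+iy$ of norm $c$ ``divides out'' the level. I would build from $x,y$, together with the integers $\frac{a-a_0}{p}$ and $\frac{a_0a+1}{p}$, an explicit element of $R(p)^+$ that sends $z$ to $z_0$: the first divisibility condition $c\mid(x-ay)$ guarantees that it lowers the level from $c$ to $1$, while the second condition $c\mid\frac{a-a_0}{p}x+\frac{a_0a+1}{p}y$ is precisely what forces the upper-right entry to be divisible by $p$, i.e. membership in $\Gamma^0(p)$ (equivalently in $R(p)^+$), pinning the image down to the residue $a_0$. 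Finally, the two square roots $\pm a_0$ give a priori two $R(p)^+$-classes, which I would identify using $\omega_1\in R(p)^-$ (the Fricke map $w\mapsto-p/w$) followed by one more application of the lemma; this collapses them to a single $R(p)$-class, so $\mathrm{Ell}^+(p)_0$ is a singleton.

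The main obstacle is the third paragraph: writing down the reducing matrix explicitly and checking both that it lies in $R(p)$ and that it carries $z$ to $z_0$. The two divisibility conclusions of Lemma~\ref{lem_Fermat} are tailored exactly to these two requirements — level reduction and $p$-integrality — so the verification should be a finite, if slightly delicate, computation; the only extra care is needed when $\gcd(x,y)>1$, where one must clear the common factor before assembling the matrix.
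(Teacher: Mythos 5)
Your proposal is correct and follows essentially the same route as the paper's proof: the trace-zero determinant condition forces $p=2$ or $p\equiv 1\pmod 4$ (and yields the prime-factor condition on $c$), and the two divisibility conclusions of Lemma~\ref{lem_Fermat} are used exactly as in the paper to assemble an explicit element of $R(p)^+$ carrying an arbitrary trace-zero elliptic point $\frac{a+i}{c\sqrt p}$ to $z_0=\frac{a_0+i}{\sqrt p}$. The only deviations are minor and harmless: where you identify the two classes coming from $\pm a_0$ by applying $\omega_1$ and re-running the reduction, the paper instead produces an explicit $R(p)^-$-equivalence by noting that the positive definite form $\frac{a_0^2+1}{p}Z^2+2a_0ZW+pW^2$ has discriminant $-4$, hence represents $1$; and the paper's explicit matrix (with $Z=y_0$, $W=-x_0-a_0y_0$, $X=\frac{(a_0+a)Z+W}{c}$, $Y=\frac{1}{c}\left(-\frac{a_0^2+1}{p}Z+\frac{a-a_0}{p}W\right)$, whose determinant identity $XW-pYZ=\frac{(W+a_0Z)^2+Z^2}{c}=1$ is exactly $c=x_0^2+y_0^2$) requires no coprimality of $(x_0,y_0)$, so your $\gcd(x,y)>1$ caveat is unnecessary.
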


\begin{proof}
Let $z \in \mathrm{Ell}^+(p)_0$ be an elliptic point with trace $0$ and let $\gamma = \begin{pmatrix}
a & b \sqrt p \\ c \sqrt p & -a
\end{pmatrix} \in R(p)^+$ be one of its associated elliptic elements. Then $z$ can be written as $z=\frac{a+i}{c\sqrt p}.$ Note that $\det{\gamma}=-a^2-pbc=1$ implies $p \mid (a^2+1)$. This shows that if such $z \in \Ha$ exists, then $p$ must be 2 or of the form $4r+1$ for an integer $r \in \Z$. Moreover, we have $c \mid (a^2+1)$, so any prime factor $q$ of $c$ is 2 or satisfies $q \equiv 1 \pmod{4}$. 

For the rest of the proof, we suppose that $p=2$ or  $p=4r+1$ for some $r \in \Z$. Fix an integer $a_0 \in \Z$ such that $a_0^2+1 \equiv 0 \pmod{p}.$ Let $b_0:=\frac{a_0^2+1}{p}$ and $z_0:=\frac{a_0+i}{\sqrt p}$. It is an elliptic point of the element $\begin{pmatrix}
a_0 & b_0\sqrt p \\ \sqrt p & -a_0
\end{pmatrix}$, so $[z_0] \in \mathrm{Ell}^+(p)_0.$ Note that $[-\overline{z_0}] \in \mathrm{Ell}^+(p)_0$, which is inherited from $-a_0$ instead of $a_0$. We claim that these two elliptic points are $R(p)^-$-equivalent. Indeed, it is enough to show that there exist integers $X,Y,Z,W \in \Z$ such that
\begin{align*}
    \begin{pmatrix}
    X\sqrt p & Y \\ Z & W\sqrt p
    \end{pmatrix} \left(\frac{a_0+i}{\sqrt p}\right)=\frac{-a_0+i}{\sqrt p}.
\end{align*}
This can be reduced to the following system of equations by direct calculation,  
\begin{align*}
    \begin{cases}
\frac{a_0^2+1}{p}Z^2+2aZW+pW^2=1, \\
X=W, \\
Y=\frac{-a^2-1}{p}Z-2aW.
\end{cases}
\end{align*}
Notice that a quadratic form $\frac{a_0^2+1}{p}Z^2+2aZW+pW^2$ is positive definite of discriminant $-4$. Its reduced form is the principal form $Z^2+W^2,$ so it must represent~$1$. Thus the equation $\frac{a_0^2+1}{p}Z^2+2aZW+pW^2=1$ must has an integer solution $(Z,W) \in \Z^2.$ 

Next, we prove that any elliptic point $z \in \mathrm{Ell}^+(p)_0$ is $R(p)$-equivalent to $z_0$. Since $a^2+1 \equiv 0~\pmod{p}$, we may assume that $a \equiv a_0~\pmod{p}.$ By Lemma \ref{lem_Fermat}, there exists a pair of integers $(x_0,y_0) \in \Z^2$ such that
\begin{enumerate}[(a)]
    \item $c=x_0^2+y_0^2$, \text{ and }
    \item $c$ divides $x_0-ay_0$ and $\frac{a-a_0}{p}x_0+\frac{a_0a+1}{p}y_0$.
\end{enumerate}
Let \begin{align*}
    Z&:=y_0, \quad W:=-x_0-a_0y_0, \\ 
    X&:=\frac{(a_0+a)Z+W}{c}, \quad Y:=\frac{1}{c}\left(-\frac{a_0^2+1}{p}Z+\frac{a-a_0}{p}W\right).
\end{align*}
They are all integers, and it can be directly shown that $XW-pYZ=1$ and a matrix
$
    \begin{pmatrix}
    X & Y\sqrt p \\ Z\sqrt p & W 
    \end{pmatrix}
$ sends $z_0$ to $z$
\end{proof}

\begin{proposition}\label{prop_Ell+tr1}
Let $p$ be a prime. If $p=3$ or $p=3r+1$ for some  $r \in \Z$, then the set of elliptic points $\mathrm{Ell}^+(p)_1$ is a singleton. Otherwise, $\mathrm{Ell}^+(p)_1= \emptyset$.
\end{proposition}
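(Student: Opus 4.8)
The plan is to mirror the proof of Proposition~\ref{prop_Ell+tr0}, replacing the discriminant~$-4$/Gaussian-integer input by the discriminant~$-3$/Eisenstein-integer input supplied by Lemma~\ref{lem_Fermat2}.

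First I would parametrize the trace-$1$ elliptic elements. Writing an associated element as $\gamma=\begin{pmatrix} a & b\sqrt p \\ c\sqrt p & 1-a \end{pmatrix}\in R(p)^+$ with $\det\gamma=1$ gives $a(1-a)-pbc=1$, i.e. $p \mid (a^2-a+1)$ and $pbc=-(a^2-a+1)$. Solving $\gamma z=z$ yields the fixed point $z=\frac{(2a-1)+i\sqrt 3}{2c\sqrt p}$, the radical $-3$ arising from $(1-2a)^2+4pbc=-3$. Since $a^2-a+1$ has discriminant $-3$, the congruence $a^2-a+1\equiv 0 \pmod p$ is solvable iff $p=3$ or $-3$ is a quadratic residue modulo $p$, i.e. iff $p=3$ or $p\equiv 1 \pmod 3$; this already yields the emptiness clause. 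Moreover $c \mid (a^2-a+1)$ forces every prime factor of $c$ to be $3$ or $\equiv 1\pmod 3$, which is exactly the hypothesis needed to invoke Lemma~\ref{lem_Fermat2}.

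Next, assuming $p=3$ or $p=3r+1$, I would fix $a_0$ with $a_0^2-a_0+1\equiv 0 \pmod p$, set $b_0:=-(a_0^2-a_0+1)/p$, and take the base point $z_0=\frac{(2a_0-1)+i\sqrt 3}{2\sqrt p}$, fixed by $\begin{pmatrix} a_0 & b_0\sqrt p \\ \sqrt p & 1-a_0 \end{pmatrix}$, so that $[z_0]\in \mathrm{Ell}^+(p)_1$. Because the two residues of the congruence modulo $p$ are $a_0$ and $1-a_0$, and $1-a_0$ produces the point $-\overline{z_0}$, I would first show that $z_0$ and $-\overline{z_0}$ are $R(p)^-$-equivalent: as in Proposition~\ref{prop_Ell+tr0}, seeking a matrix $\begin{pmatrix} X\sqrt p & Y \\ Z & W\sqrt p \end{pmatrix}\in R(p)^-$ with $\begin{pmatrix} X\sqrt p & Y \\ Z & W\sqrt p \end{pmatrix}z_0=-\overline{z_0}$ reduces to representing $1$ by the positive definite form $\frac{a_0^2-a_0+1}{p}Z^2+(2a_0-1)ZW+pW^2$ of discriminant $-3$, whose reduced form is the principal form $Z^2+ZW+W^2$; hence it represents $1$ and the required integers exist. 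This equivalence lets me assume $a\equiv a_0\pmod p$ in what follows.

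Finally, for an arbitrary $z\in \mathrm{Ell}^+(p)_1$ with $a\equiv a_0\pmod p$, I would apply Lemma~\ref{lem_Fermat2} to obtain $(x_0,y_0)$ with $c=3x_0^2+y_0^2$ and $c \mid (2a-1)x_0+y_0$, and then assemble explicit integers $X,Y,Z,W$ (the analogues of those ending the proof of Proposition~\ref{prop_Ell+tr0}) into a matrix $\begin{pmatrix} X & Y\sqrt p \\ Z\sqrt p & W \end{pmatrix}\in R(p)^+$ with $XW-pYZ=1$ sending $z_0$ to $z$. The hard part will be precisely this last step: pinning down the correct closed forms for $X,Y,Z,W$ in terms of $x_0,y_0,a,a_0$ and verifying both the unit-determinant relation $XW-pYZ=1$ and that the matrix indeed maps $z_0$ to $z$. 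Once every trace-$1$ elliptic point is shown to lie in the single class of $z_0$, it follows that $\mathrm{Ell}^+(p)_1$ is a singleton, completing the proof.
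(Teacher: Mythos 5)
Your proposal follows the paper's proof of this proposition essentially step for step: the same parametrization of trace-$1$ elliptic elements yielding $z=\frac{2a-1+\sqrt 3 i}{2c\sqrt p}$ and the divisibility $p\mid(a^2-a+1)$ (hence the emptiness clause), the same observation that the two residues $a_0$ and $1-a_0$ give the points $z_0$ and $-\overline{z_0}$ which are $R(p)^-$-equivalent because the positive definite form $\frac{a_0^2-a_0+1}{p}Z^2+(2a_0-1)ZW+pW^2$ of discriminant $-3$ reduces to the principal form and so represents $1$, and the same appeal to Lemma~\ref{lem_Fermat2}. The single step you defer as ``the hard part'' is exactly where the paper finishes, by the explicit choices $Z:=2x_0$, $W:=-(2a_0-1)x_0+y_0$, $X:=\frac{(a_0+a-1)Z+W}{c}$, $Y:=\frac{1}{c}\left(-\frac{a_0^2-a_0+1}{p}Z+\frac{a-a_0}{p}W\right)$, whose integrality comes from the divisibility conditions furnished by the lemma (note the paper itself misquotes Lemma~\ref{lem_Fermat2} at this point, copying the conditions of Lemma~\ref{lem_Fermat}, whereas your proposal cites the correct conditions $c=3x_0^2+y_0^2$ and $c\mid(2a-1)x_0+y_0$), so your plan is sound and matches the paper's argument.
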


\begin{proof}
Suppose there exists an elliptic point $z\in \mathrm{Ell}^+(p)_1$ and  let $\gamma=\begin{pmatrix}
a & b \sqrt p \\ c \sqrt p & -a+1
\end{pmatrix} \in R(p)^+$ be its associated elliptic element. Then we can write $z=\frac{2a-1+\sqrt{3} i}{2\sqrt p}.$ As in the proof of Proposition~\ref{prop_Ell+tr0}, we have $p\mid (a^2-a+1)$, so $p=3$ or  $p=3r+1$ for some $r \in \Z$.

Suppose $p=3$ or  $p=3r+1$, and fix an integer $a_0\in \Z$ with $a_0^2-a_0+1 \equiv 0 \pmod{p}$. Note that there are exactly two integers $n$ satisfying $n^2-n+1 \equiv 0 \pmod{p}$ in mod $p$, namely $n \equiv a_0 \pmod{p}$ or $n \equiv -a_0+1 \pmod{p}.$ For $b_0:=\frac{a_0^2-a_0+1}{p}=\frac{(-a_0+1)^2-(-a_0+1)+1}{p}$, we have two elliptic points $z_0=\frac{2a_0-1+\sqrt 3 i}{2\sqrt p}$ and $-\overline{z_0}=\frac{-2a_0+1+\sqrt 3 i}{2\sqrt p}$ that are associated to the elliptic elements $\begin{pmatrix}
a_0 & b_0\sqrt p \\ \sqrt p & -a_0+1
\end{pmatrix}$ and $\begin{pmatrix}
-a_0+1 & b_0\sqrt p \\ \sqrt p & a_0
\end{pmatrix}$, respectively. These two elliptic points are $R(p)^-$-equivalent. Indeed, as in the proof of Proposition \ref{prop_Ell+tr0}, it is enough to find integer solutions of the quadratic equation 
\begin{align*}
    \frac{a_0^2-a_0+1}{p}Z^2+(2a_0-1)ZW+pW^2=1.
\end{align*}
Since the quadratic form $\frac{a_0^2-a_0+1}{p}Z^2+(2a_0-1)ZW+pW^2$ is positive definite of discriminant~$-3$, it must have an integer solution.

It only remains to prove that any elliptic point $z \in \mathrm{Ell}^+(p)_1$ is $R(p)$-equivalent to $z_0.$ Similarly to the proof of Proposition \ref{prop_Ell+tr0}, we know that any prime factor of $c$ is 3 or of the form $3r+1$ for an integer $r \in \Z.$ Applying Lemma \ref{lem_Fermat2}, we have a pair of integers $(x_0,y_0) \in \Z^2$ such that 
\begin{enumerate}[(a)]
    \item $c=x_0^2+y_0^2$, \text{ and }
    \item $c$ divides $x_0-ay_0$ and $\frac{a-a_0}{p}x_0+\frac{a_0a+1}{p}y_0$.
\end{enumerate}
Define
\begin{align*}
    Z&:=2x_0, \quad W:=-(2a_0-1)x_0+y_0, \\
    X&:=\frac{(a_0+a-1)X+W}{c}, \quad Y:=\frac{1}{c}\left(-\frac{a_0^2-a_0+1}{p}Z+\frac{a-a_0}{p}W\right).
\end{align*}
One can see that a matrix $ \begin{pmatrix}
    X & Y\sqrt p \\ Z\sqrt p & W 
    \end{pmatrix}$ belongs to $R(p)^+$ and sends $z_0$ to $z$.
\end{proof}

\begin{proof}[Proof of Theorem~\ref{thm_ell}\normalfont{(a)}]
It follows from combining Propositions~\ref{prop_Ell+tr0} and \ref{prop_Ell+tr1}.
\end{proof}

\

Next we prove Theorem~\ref{thm_ell}(b). Suppose that $D$ is a negative integer such that $D \equiv 0,1 \pmod{4}$.
Consider the set of all (not~necessarily primitive) positive definite binary quadratic forms of discriminant $D.$ We say that two binary quadratic forms $f(X,Y)$ and $g(X,Y)$ are {\it equivalent} if there exist integers $a,b,c,d$ such that $f(x,y)=g(ax+by,cx+dy)$ and $ad-bc=\pm 1$. If $ad-bc=1$, we say that they are {\it properly equivalent}.

 Recall that any positive definite binary quadratic form of discriminant $D$ has exactly one reduced form up to equivalence. In other words, there is a bijection between the set of all equivalence classes of positive definite binary quadratic forms of discriminant $D$ and the set of all positive definite binary quadratic reduced forms of discriminant~$D.$ If we consider the set of all proper equivalence classes of primitive forms, which is a binary quadratic form whose coefficients are relatively prime, there is a natural group structure which is given via transport of structure of the ideal class group $C(D).$ We call such a group the {\it form class group} of discriminant $D$.
 

We set the following notations:
\begin{align*}
    (A,B,C):=&\text{ a binary quadratic form } AX^2+BXY+CY^2, \\
    [(A,B,C)]:=&\text{ an equivalence class of a binary quadratic form } (A,B,C), \\
    H(D):=&\text{ the set of all proper equivalence classes of positive definite binary quadratic}\\ &\text{ forms of discriminant } D.
\end{align*}

\begin{proof}[Proof of Theorem \ref{thm_ell}\normalfont{(b)}]
For a prime $p \geq 5$, any elliptic point $[z] \in \mathrm{Ell}^-(p)$ is of trace zero. We can write $z=\frac{a\sqrt p +i}{c}$, where $\begin{pmatrix}
a \sqrt p & b \\ c & -a \sqrt p
\end{pmatrix}$ is an elliptic element of $z_0$. We associate an elliptic point $z$ to a binary quadratic form $(-pb,2pa,c)=(\frac{a^2p+1}{c}p,2pa,c),$ denoted by $QF(z).$ 

We claim that the map defined by
\begin{align*}
    \mathrm{Ell}^-(p) &\longrightarrow H(-4p) \\
   [z] &\mapsto [QF(z)] 
\end{align*}
is well-defined and bijective. Indeed, let $[z_1]=[z_2] \in \mathrm{Ell}^-(p)$ be $R(p)$-equivalent elliptic elements. There exists $\gamma \in R(p)$ such that $\gamma z_1=z_2$. Moreover we may assume $\gamma:=\begin{pmatrix}
r & s\sqrt p \\ t \sqrt p & u
\end{pmatrix} \in~R(p)^+$, since if $\gamma \in R(p)^-$, then for any $\alpha \in R(p)_{z_1}^-$, we have $\gamma \alpha z_1=z_2$ and $\gamma \alpha \in R(p)^+$. It is shown by direct computation that a matrix $\begin{pmatrix}
r & s \\ tp & u 
\end{pmatrix} \in \SL_2(\Z)$ gives a proper equivalence between $QF(z_1)$ and $QF(z_2)$. In other words,
\begin{align*}
    QF(z_1)(rX+sY,tpX+uY)=QF(z_2)(X,Y).
\end{align*}

The injectivity of the map follows by reversing the above argument. If $[QF(z_1)]=[QF(z_2)]$, then there exists a matrix $\begin{pmatrix}
    r & s \\ t' & u
    \end{pmatrix} \in \SL_2(\Z)$ such that
\begin{align*}
    \begin{pmatrix}
    r & s \\ t' & u
    \end{pmatrix} 
    \begin{pmatrix}
    b_1p & a_1p \\ a_1 p & c_1
    \end{pmatrix}
    \begin{pmatrix}
    r & t' \\ s & u
    \end{pmatrix}=
    \begin{pmatrix}
    b_2p & a_2p \\ a_2 p & c_2
    \end{pmatrix}.
\end{align*}
We have $t'^2 c_1 \equiv r^2 b_1 p +4rt' a_1 p + t'^2c_1 = b_2 p \equiv 0 \pmod{p},$ so $t' \equiv 0 \pmod{p}.$ Letting $t'=pt,$  
the matrix $\gamma=\begin{pmatrix}
r & s\sqrt p \\ t\sqrt p & u
\end{pmatrix}\in R(p)$ sends $z_1$ to $z_2.$

The remaining part is to prove surjectivity. Let $[(A,B',C)] \in H(-4p)$ be an equivalence class of the binary quadratic form $(A,B',C)$. Since $4$ divides its discriminant, $B'$ must be even, so we write $B'=2B.$ We consider two cases: either $p \mid A$ or not.

Suppose $A$ is divisible by $p.$ Again by the divisibility of its discriminant, $B$ is also divisible by $p$, so we write $pA$ and $pB$ instead of $A$ and $B$, respectively. Note that a quadratic form $(pA,2pB,C)$ is properly equivalent to a quadratic form $(C,-2pB,pA)$ via an equivalence $\omega_1=\begin{pmatrix}
0 & -1 \\ 1 & 0
\end{pmatrix}$, i.e.,
\begin{align*}
    (pA,2pB,C)(-Y,X)=(C,-2pB,pA)(X,Y).
\end{align*}
Let $a=-B$  and $c=A$. By the discriminant condition, we have $-b:=C=\frac{a^2p+1}{c}$. 

Next suppose $A$ is not divisible by $p$. Let $s$ be an integer such that $sA+B \equiv 0 \pmod{p}.$ Since $B^2-AC=-p$, we have $s^2 A+sB+C \equiv 0 \pmod{p}.$ In this case, we set
\begin{align*}
    a:=\frac{sA+B}{p}, \quad c:=\frac{s^2A+sB+C}{p}.
\end{align*}
Then $a$ and $c$ are integers  satisfying $-b:=\frac{a^2 p+1}{c}=A \in \Z.$ One can see that the matrix $\begin{pmatrix}
1 & s \\ 0 & 1
\end{pmatrix}$ gives a proper equivalence between two quadratic forms via
\begin{align*}
    \begin{pmatrix}
1 & 0 \\ s & 1
\end{pmatrix}
\begin{pmatrix}
A & B \\ B & C
\end{pmatrix}
\begin{pmatrix}
1 & s \\ 0 & 1
\end{pmatrix}=
\begin{pmatrix}
A & sA+B \\ sA+B & s^2A+sB+C
\end{pmatrix},
\end{align*}
so $[(A,2B,C)]=[(-b,2ap,cp)].$ 

In both cases, we have shown that the quadratic form $(A,2B,C)$ is properly equivalent to the quadratic form of the form $(-b,2ap,cp).$ Finally, we point out that there is a proper equivalence $\begin{pmatrix}
a & b \\ c & -ap
\end{pmatrix}$ between two quadratic forms $(-b,2ap,cp)$ and $(-bp,2ap,c)$. Note that a class of the quadratic form $[(-bp,2ap,c)]$ is equal to $[QF(z)]$, where $z$ is an elliptic point $z=\frac{a^2p+1}{c}.$ Therefore the map $[z] \mapsto [QF(z)]$ is surjective.
\end{proof}

\

\begin{remark}
One may slightly generalize the proof of Theorem \ref{thm_ell}(b) for an arbitrary trace $t \in \{-1,0,1\}.$ If $[z] \in \mathrm{Ell}^-(p)$ is an elliptic point and $\begin{pmatrix}
a\sqrt p & b \\ c & d\sqrt p
\end{pmatrix}$ is its associated elliptic element, then we have $z=\frac{(2a-t)\sqrt p +\sqrt{4-pt^2}i}{2c}.$ If we let $QF(z):=(-pb,p(2a-t),c)$, then the map
\begin{align*}
    \mathrm{Ell}^+(p) &\longrightarrow H(t^2p^2-4p) \\
   [z] &\mapsto [QF(z)]
\end{align*}
is also bijective. However, the set $\mathrm{Ell}^+(p)_t$ is nonempty only if $p=2$ or $3$. 
\end{remark}

\

Let $z \in \Ha$ be any elliptic point for $R(N).$ The order of $z$ is defined as
\begin{align*}
    e_z:=|R(N)_z/\{ \pm I\}|=\frac{1}{2}|R(N)_z|.
\end{align*}
It is independent of the choice of representatives of a class $[z] \in \mathrm{Ell}(N).$

\begin{proposition}
Let $z \in \Ha$ be an elliptic point for $R(p).$ Then the order of $z$ is
\begin{align*}
    e_z=\begin{cases}
        3 & \text{ if } [z] \in \mathrm{Ell}^+(p)_1, \\
        2 & \text{ otherwise.}
    \end{cases}
\end{align*}
\end{proposition}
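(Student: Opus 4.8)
The plan is to read off the order $e_z$ directly from the trace of the associated elliptic element, using the general fact about Fuchsian groups that the order of the stabilizer of an elliptic point is exactly the order of the corresponding element in $\mathrm{PSL}_2(\R)$, which is determined by its trace. Recall that an elliptic element $\gamma \in \SL_2(\R)$ with $|\mathrm{tr}(\gamma)|<2$ is conjugate in $\SL_2(\R)$ to a rotation, and its image in $\mathrm{PSL}_2(\R)$ has finite order $e$ precisely when $\mathrm{tr}(\gamma)=2\cos(\pi j/e)$ for some $j$ coprime to $e$; in particular $\mathrm{tr}(\gamma)=0$ forces the image to have order $2$, while $|\mathrm{tr}(\gamma)|=1$ forces order $3$. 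Since the order $e_z=\tfrac12|R(p)_z|$ is by definition the order of the image of $R(p)_z$ in $\mathrm{PSL}_2(\R)$, and since $R(p)_z/\{\pm I\}$ is a finite cyclic group generated by the image of a primitive elliptic element, it suffices to identify the trace of such a generator.

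First I would invoke the trace classification established earlier in the excerpt: by the union \eqref{eqn_ellunion} together with Propositions~\ref{prop_Ell+tr0} and \ref{prop_Ell+tr1}, every elliptic point $[z] \in \mathrm{Ell}^+(p)$ has an associated elliptic element of trace $0$ or $1$, and the two cases are disjoint. For points in $\mathrm{Ell}^-(p)$, the proof of Theorem~\ref{thm_ell}(b) shows these all have trace zero. Thus the trace of a generating elliptic element is $0$ for every point except those in $\mathrm{Ell}^+(p)_1$, where it is $\pm 1$. Applying the trace-to-order correspondence, $e_z=2$ in the trace-$0$ case and $e_z=3$ in the trace-$1$ case, which is exactly the claimed dichotomy.

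The one point requiring care is that the stabilizer might be generated by a primitive element whose order is larger, with the trace-$0$ or trace-$1$ element being a power of it; I would rule this out by noting that $R(p)_z/\{\pm I\}$ is cyclic and that in a cyclic group of order $e_z$, an element of multiplicative order $2$ (respectively $3$) exists only if $2 \mid e_z$ (respectively $3 \mid e_z$). For this I need to confirm that $e_z$ cannot exceed these values, i.e.\ that no elliptic point for $R(p)$ carries an element of trace $2\cos(\pi/e)$ with $e>3$. The cleanest way is to observe that the possible traces of elliptic elements in $R(p)$ are integers (for $R(p)^+$ the trace lies in $\Z$, and for $R(p)^-$ the trace is of the form $(a+d)\sqrt p$, which has absolute value below $2$ only when $a+d=0$, again giving trace $0$); the only integers of absolute value strictly less than $2$ are $0$ and $\pm 1$, so the only possible orders are $2$ and $3$, and no larger cyclic stabilizer can occur.

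The main obstacle, then, is the verification that the trace of any elliptic element of $R(p)$ is forced to be an integer in $\{-1,0,1\}$ (after accounting for the $\sqrt p$ factor in the $R(p)^-$ case), so that the stabilizer has order exactly $2$ or $3$ and is generated by the very element whose trace we computed. Once this integrality constraint on the trace is in hand, the proposition follows immediately from matching $\mathrm{tr}=0$ to order $2$ and $\mathrm{tr}=\pm 1$ to order $3$.
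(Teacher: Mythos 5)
Your proof is correct, but it follows a genuinely different route from the paper. The paper argues via the quadratic-form correspondence it has just set up: for $[z]\in \mathrm{Ell}^-(p)$ it first reduces to counting $R(p)_z^+$ via the bijection $\alpha \mapsto \gamma\alpha$ between $R(p)_z^+/\{\pm I\}$ and $R(p)_z^-/\{\pm I\}$, then identifies $|R(p)_z^+|$ with the number of pairs $(X,Y)$ representing a distinguished integer by the reduced form equivalent to $QF(z)$, and counts those representations by completing the square as in \eqref{eqn_qfrepreA} and \eqref{eqn_qfrepreC}; the $\mathrm{Ell}^+(p)$ cases are handled by the analogous explicit computation. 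Your argument instead exploits trace rigidity: since $R(p)_z/\{\pm I\}$ is finite cyclic and every non-central element of the stabilizer is elliptic with trace either an integer (the $R(p)^+$ part) or $(a+d)\sqrt p$ (the $R(p)^-$ part, forced to be $0$ once $\sqrt p>2$), a generator must have trace $2\cos(\pi j/e_z)\in\{0,\pm 1\}$ with $\gcd(j,e_z)=1$, which pins down $e_z\in\{2,3\}$ and matches trace $0$ to order $2$ and trace $\pm 1$ to order $3$. This is shorter and more structural, avoids the representation count entirely, and yields as a free byproduct the disjointness of $\mathrm{Ell}^+(p)_0$ and $\mathrm{Ell}^+(p)_1$ (a stabilizer containing elements of orders $2$ and $3$ would be cyclic of order divisible by $6$, hence contain an element of trace $\pm\sqrt 3$, impossible); what the paper's computation buys in exchange is an explicit description of the stabilizer in terms of representations by $QF(z)$, consistent with the class-group machinery of Theorem~\ref{thm_ell}(b). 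One caveat you should make explicit: your parenthetical that $|(a+d)\sqrt p|<2$ forces $a+d=0$ uses $p\geq 5$, and the proposition genuinely fails for $p=2,3$ (where $\pm\sqrt p=2\cos(\pi/4)$ or $2\cos(\pi/6)$ produces the order-$4$ and order-$6$ points of the Hecke triangle groups $\Gamma_{(2,4,\infty)}$ and $\Gamma_{(2,6,\infty)}$); this is covered by the paper's standing assumption $p\geq 5$ stated earlier in the section, but your write-up should cite it.
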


\begin{proof}
Let $[z] \in \mathrm{Ell}^-(p)$ and $\gamma=\begin{pmatrix}
a \sqrt p & b \\ c & d\sqrt p
\end{pmatrix}$ be an elliptic element associated to $z$. One can see that the map $\alpha \mapsto \gamma\alpha$ for $\alpha \in R(p)_z^+$ induces a bijection between $R(p)_z^+/\{\pm I\}$ and $R(p)_z^-/\{\pm I\}$. Thus, in order to count the number of elements of $R(p)_z/\{\pm I\}$, it suffices to  count those of $R(p)_z^+.$ 
 By definition, the number of elements of the set $R(p)_z^+$ is equal to the number of pairs of integers $(X,Y) \in \Z^2$ such that $c$ is represented by the quadratic form associated with $z$ of  a pair $(X,Y)$, i.e., $QF(z)(X,Y)=c.$

Let $(A,2B,C)$ be a reduced form of discriminant $-4p.$ By Theorem \ref{thm_ell}(b), there is an elliptic point $[z] \in \mathrm{Ell}^-(p)$ such that $[QF(z)]=[(A,2B,C)]$. 

Suppose $A$ and $B$ are relatively prime. Then one can choose a representative $z=\frac{\hat{B}\sqrt p+1}{A},$ where $\hat{B}$ is any integer satisfying $B\hat{B} \equiv 1 \pmod{A}.$ Indeed, if we let 
\begin{align*}
    r:=-\frac{B+p\hat B}{A},
\end{align*}
then it is an integer since $B^2-AC=-p.$ Two binary quadratic forms $\left(\frac{p\hat B^2+1}{A}p,2p\hat B,A\right)$ and $(A,2B,C)$ are properly equivalent via an equivalence $\begin{pmatrix}
0 & 1 \\ -1 & r
\end{pmatrix},$ and $QF\left(\frac{\hat{B}\sqrt p+1}{A}\right)=\left(\frac{p\hat B^2+1}{A}p,2p\hat B,A\right).$
Hence we need to count the number of pairs $(X,Y)$ that represent $A$ by the binary quadratic form $(A,2B,C).$ Note that a quadratic form $(A,2B,C)$ can be written as
\begin{align}\label{eqn_qfrepreA}
    AX^2+2BXY+CY^2=A\left(X+\frac{B}{A}Y\right)^2+\left(C-\frac{B^2}{A^2}\right)Y^2.
\end{align}
Recall that $C \geq A$ since $(A,2B,C)$ is reduced. Here we assume $\gcd(A,B)=1$, so $C-~\frac{B^2}{A^2}>~A.$ Thus the quadratic form in (\ref{eqn_qfrepreA}) represents $A$ if only if $X=\pm 1$ and $Y=0$. 

Now suppose $A$ and $B$ is not relatively prime. By the discriminant condition, we have $\gcd(A,B)=p$, so let $A=pA'$ and $B=pB'.$ Again by the discriminant condition, we have $A'=\frac{pB'^2+1}{C}$ so that $(A,2B,C)=\left(\frac{pB'^2+1}{C}p,2pB',C\right)$, which is the associated quadratic form of an elliptic point $\frac{B'^2\sqrt p +1}{C} \in \Ha.$ In this case, the problem reduces to count the number of pairs $(X,Y) \in \Z^2$ for which $(A,2B,C)(X,Y)=C.$ Writing the reduced form $(A,2B,C)$ in the same way as in \eqref{eqn_qfrepreA}, we have an equation
\begin{align}\label{eqn_qfrepreC}
    CY^2+2pB'YX+pA'X^2=C\left(Y+\frac{pB'}{C}X\right)^2+\frac{p}{C}X^2=C.
\end{align}
From \eqref{eqn_qfrepreC}, $Y+\frac{pB'}{C}X$ must be $0$ or $\pm 1$. If $Y+\frac{pB'}{C}=0$ then $X^2=\frac{C^2}{p}$, which contradicts the fact that $X$ in an integer. Hence $Y+\frac{pB'}{C}=1$ or $-1$, and in both cases we have $X=0.$ Therefore for any $[z] \in \mathrm{Ell}^-(p),$ we conclude $e_z=2.$

In Proposition \ref{prop_Ell+tr0} and Proposition  \ref{prop_Ell+tr1}, we have explicitly found out what the set $\mathrm{Ell}^+(p)$ consists of. The direct computation using the similar argument as the above shows that $e_z=2$ for $z\in \mathrm{Ell}^+(p)_0$ and $e_z=3$ for $z \in \mathrm{Ell}^+(p)_1.$
\end{proof}

\

Recall that for a Fuchsian group $\Gamma$, a real number $x \in \R$ is called the {\it cusp} for $\Gamma$ if there exists a nontrivial element $\gamma \in \Gamma$ that stabilizes $x$ as a point in the complex plane. In this case, the matrix $\gamma$ is called the {\it parabolic element}. It follows that the trace of $\gamma$ is $2$ or $-2$ by definition. 

Contrary to elliptic points, there is only one cusp for any $R(p)$.

\begin{proposition}\label{prop_cusp}
Every cusp for $R(p)$ is equivalent to $\infty.$
\end{proposition}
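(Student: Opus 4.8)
The plan is to show that every cusp of $R(p)$ lies in the $R(p)$-orbit of $\infty$. A cusp $x$ is stabilized by a parabolic element, i.e. a matrix of trace $\pm 2$ in $R(p)$. First I would observe that, just as with the elliptic-point analysis, a parabolic element either lies in $R(p)^+$ or in $R(p)^-$, and that it suffices to produce a matrix in $R(p)$ carrying $x$ to $\infty$. Since $\infty$ is fixed by $T_p = \begin{pmatrix} 1 & \sqrt p \\ 0 & 1\end{pmatrix} \in R(p)^+$, it is itself a cusp, so the content is the transitivity statement.

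The core of the argument is a descent on the lower-left entry, entirely parallel to the proof of Lemma~\ref{lem_gen}. Given a cusp $x \in \R$, I would take a parabolic $\gamma \in R(p)$ fixing $x$; since $x = \gamma x$ and $x$ is the unique fixed point of the parabolic $\gamma$, I would instead work directly with an arbitrary matrix in $R(p)$ whose action sends $\infty$ to $x$, and show its lower-left entry can be cleared. Concretely, any cusp of a Fuchsian group is the image of $\infty$ under \emph{some} element of the group exactly when the orbit is transitive, so I would argue that $x \in \Q \cup \{\infty\}$ with denominators controlled by $\sqrt p$, write $x = \frac{a}{c}$ (after scaling by $\sqrt p$ appropriately depending on whether the relevant matrix sits in $R(p)^+$ or $R(p)^-$), and then repeatedly left-multiply by the generators $s_p(n)$, $T_p$, and $\omega_1$ from Lemma~\ref{lem_gen} to reduce $|c|$. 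The inequality $(n-1)|a| < |\hat c| < (n+1)|a|$ that drives the reduction in Lemma~\ref{lem_gen}, together with the observation that a strict decrease $|a'| < |a|$ is forced in the non-properly-reduced case, shows the descent terminates with lower-left entry $0$, i.e. at a matrix fixing $\infty$. Hence $x$ is $R(p)$-equivalent to $\infty$.

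The efficient way to phrase this is to invoke Lemma~\ref{lem_gen} almost verbatim: the lemma already establishes that every element of $R(p)$ is a word in $T_p$, $\omega_1$, and the $s_p(n)$, and its proof is precisely a reduction algorithm on the first column $(a\sqrt p, c)^t$ (or $(a, c\sqrt p)^t$) of a matrix in $R(p)^{\mp}$. Applying that same reduction to the matrix sending $\infty$ to $x$ drives the entry that measures the finiteness of $x$ down to the case of a matrix in the stabilizer of $\infty$, which is generated by $T_p$ and $-I$. I would therefore reduce the proposition to the statement ``the first column of any $\gamma \in R(p)$ can be reduced, by right-multiplication by generators, to $(\pm 1, 0)^t$ or $(0,\pm 1)^t$,'' which is exactly what the proof of Lemma~\ref{lem_gen} yields.

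The main obstacle I anticipate is bookkeeping the two cosets $R(p)^+$ and $R(p)^-$ cleanly: a matrix carrying $\infty$ to a given cusp may lie in either coset, and the $\sqrt p$ factors sit in different entries in the two cases, so the coprimality condition governing the reduction ($\gcd(pa,c)=1$ versus $\gcd(a, pc)=1$) must be tracked carefully. Since $R(p)^+ = \omega_1 R(p)^-$ and $\omega_1$ interchanges $0$ and $\infty$ while preserving cusps, I expect this to be handled by reducing the $R(p)^-$ case to the $R(p)^+$ case (or vice versa) by a single multiplication by $\omega_1$, exactly as in the final line of the proof of Lemma~\ref{lem_gen}, so that no genuinely new estimate is required beyond those already established there.
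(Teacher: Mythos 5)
Your proposal has a genuine circularity at its core. You say you will ``work directly with an arbitrary matrix in $R(p)$ whose action sends $\infty$ to $x$'' and then clear its lower-left entry by the reduction of Lemma~\ref{lem_gen} --- but the existence of \emph{any} element of $R(p)$ carrying $\infty$ to $x$ is precisely the statement to be proven; once you have such a matrix, its inverse already sends $x$ to $\infty$ and no reduction is needed, and until you have it the descent has nothing to act on. The reduction in Lemma~\ref{lem_gen} addresses a different question (membership of a given element of $R(p)$ in the subgroup generated by $T_p,\omega_1,s_p(n)$), not transitivity of $R(p)$ on the cusp set, so invoking it ``almost verbatim'' does not close the loop. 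Relatedly, the step you leave as an assertion --- that a parabolic fixed point lies in $\sqrt p\,\Q$ in a suitably coprime form --- is where the actual work sits: the paper solves the fixed-point quadratic for $\gamma=\begin{pmatrix} a & b\sqrt p \\ c\sqrt p & d\end{pmatrix}$, uses $\operatorname{tr}\gamma=2$ and $\det\gamma=1$ to show the discriminant vanishes and $x=\frac{a-1}{c\sqrt p}$, and then observes $(a-1)^2=-pbc\equiv 0 \pmod p$, hence $p\mid(a-1)$, giving $x=\frac{a'}{c}\sqrt p$ with $\gcd(a',c)=1$. (A small point you also miss: every parabolic element automatically lies in $R(p)^+$, since an element of $R(p)^-$ has trace in $\sqrt p\,\Z$, which never equals $\pm 2$; so there is no second coset case to track.)

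Once $x=\frac{a'}{c}\sqrt p$ in lowest terms is in hand, the paper finishes in one line with a B\'ezout completion rather than any descent: if $p\nmid c$ then $\gcd(c,pa')=1$ and one completes $\begin{pmatrix} c & -a'\sqrt p \\ *\sqrt p & * \end{pmatrix}\in R(p)^+$ sending $x$ to $0$; if $p\mid c$, write $c=pc'$ (so $p\nmid a'$) and complete $\begin{pmatrix} c'\sqrt p & -a' \\ * & *\sqrt p\end{pmatrix}\in R(p)^-$; finally $\begin{pmatrix}\sqrt p & -1 \\ 1 & 0\end{pmatrix}$ carries $0$ to $\infty$. This case split on $p\mid c$ is exactly the coset bookkeeping you flagged as the main obstacle, and it is resolved by solvability of a single determinant equation, not by estimates. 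Your descent idea could be salvaged as a continued-fraction argument acting on the pair $(a',c)$ itself (never on a presupposed matrix), but as written the proposal both assumes the conclusion and replaces the decisive completion step with machinery that does not bear on it.
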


\begin{proof}
Note that a linear fractional transformation $\begin{pmatrix}
\sqrt p & -1 \\ 1 & 0 
\end{pmatrix} \in R(p)$ sends $0$ to $\infty.$ Thus it is enough to show that any cusp for $R(p)$ is equivalent to $0.$

Let $x \in \R$ be a cusp for $R(p).$ Take its parabolic element $\gamma=\begin{pmatrix}
a & b\sqrt p \\ c \sqrt p & d
\end{pmatrix}.$ Then $x \in \R$ satisfies
\begin{align*}
    x^2-\frac{a-d}{c\sqrt p}x-\frac{b}{c}=0.
\end{align*}
Since $\mathrm{tr}~{\gamma}=2$, we have $a-d=2(a-1).$ Also, since $\det{\gamma}=1,$ we have
\begin{align*}
    -\frac{b}{c}=\frac{-pbc}{pc^2}=\frac{-ad+1}{pc^2}=\frac{(a-1)^2}{pc^2}.
\end{align*}
Hence $x$ can be written as $x=\frac{a-1}{pc}\sqrt p.$ Moreover, one can see that $(a-1)^2 = -pbc \equiv 0~\pmod{p}$, so we write $a-1=pa'$ and then $x=\frac{a'}{c}\sqrt p$. We may assume $a'$ and $c$ are relatively prime.

Suppose that $p$ and $c$ are also relatively prime. Then $\gcd(c,-pa')=1$, so there exists a matrix of the form $\begin{pmatrix}
c & -a' \sqrt p \\ *\sqrt p & *
\end{pmatrix}$ in $R(p).$ This matrix sends $x$ to $0$.

Suppose not. Then $c$ is divisible by $p$, so we write $c=pc'.$ In this case, there exists a matrix of the form $\begin{pmatrix}
c'\sqrt p & -a' \\ * & *\sqrt p
\end{pmatrix} \in R(p)$ and this matrix sends $x$ to $0.$
\end{proof}

\

\subsection{Reducing generating sets of $R(p)$ and the genera of $X(R(p))$}

We finish this section by proving that the genus $g_p$ of the curve $X(R(p))$ is zero for each of five primes $p=5,7,11,13$ and $17$. By Corollary \ref{cor_genusthm}, it suffices to show that $v_p<2\pi h_p$ in order to prove $g_p=0.$

Recall that the fundamental domain of a Fuchsian group $\Gamma$ can be given by the Dirichlet domain centered at some point $z_0 \in \Ha$, which is defined as
\begin{align*}
    D(z_0,\Gamma)=\{z \in \Ha : d_{\Ha}(z,z_0) \leq d_{\Ha}(z,\gamma z_0) \text{ for all } \gamma \in \Gamma \}.
\end{align*}

Here $z_0 \in \Ha$ is assumed to be a non-elliptic point for $\Gamma$ and $d_{\Ha}$ is the hyperbolic metric on the Poincare upper half plane $\Ha.$ One can define the Dirichlet domain of a subset $H$ of $\Gamma$ in the same manner and we denote it by $D(z_0,H)$. Obviously one has $D(z_0,\Gamma) \subseteq D(z_0,H)$ for any subset $H \subseteq \Gamma$. 

Let $\rho : \Ha \to \K$ be a conformal map which is defined by the M\"{o}bius transformation sending $z_0$ to $0$, where $\K$ be the Poincare unit disc. We can define the Dirichlet domain centered at $0$ for $\Gamma^{\rho}:=\rho\Gamma \rho^{-1}$ in $\K$ similarly. Since the conformal map $\rho$ is an isometry between $\Ha$ and $\K$, the Dirichlet domain of $\Gamma^{\rho}$ in $\K$ is equal to the image $\rho(D(z_0,\Gamma)).$ In order to distinguish two Dirichlet domains which are contained in $\Ha$ and $\K$, respectively, we denote the latter one by $D(\Gamma).$

For an element $\gamma \in \Gamma^{\rho}$, the set $I(\gamma):=\{w \in \C : d_{\K}(w,0)=d_{\K}(w, \gamma(0))\}$ is called the {\it isometric circle} of $\gamma$. Equivalently, one can define the isometric circle of $\gamma=\begin{pmatrix}
a & b \\ c & d
\end{pmatrix} \in \Gamma^{\rho}$ as $I(\gamma):=\{w \in \C : |cw+d|=1\}$. Let $\mathrm{Ext}(I(\gamma)):=\{w \in \C : |cz+d| \geq 1\}$ be the exterior domain of the circle $I(\gamma)$ with boundary. By  \cite{MR1561111}, the Dirichlet domain of~$H$ is given by the intersection of exterior domains of the isometric circles of elements in~$H$. Namely,
\begin{align*}
    D(H)=\bigcap_{\gamma \in H} \left(\K \cap \mathrm{Ext}(I(\gamma) \right). 
\end{align*}

From now on, for $\Gamma=R(p)$, we set $z_0=2i$ and $\rho(z):=\frac{z-2i}{z+2i}$.
First note that for $T_p,\omega_1$ and $s_p(n)$ given in Lemma \ref{lem_gen},
\begin{align*}
    &T_p^{\rho}=\begin{pmatrix}
    1+\frac{\sqrt p}{4}i & -\frac{\sqrt p}{4}i \\ \frac{\sqrt p}{4}i & 1-\frac{\sqrt p}{4}i
    \end{pmatrix}, \quad
    \omega^{\rho}= \begin{pmatrix}
    -\frac{5}{4}i & -\frac{3}{4}i \\ \frac{3}{4}i & -\frac{5}{4}
    \end{pmatrix}, \\
    &s_p(n)^{\rho}=\begin{pmatrix}
    \frac{n\hat{n}+p+1}{2\sqrt p}-(\hat n -\frac{n}{4})i & \frac{n\hat{n}-p+1}{2\sqrt p}-(\hat n +\frac{n}{4})i \\ \frac{n\hat{n}-p+1}{2\sqrt p}+(\hat n +\frac{n}{4})i & \frac{n\hat{n}+p+1}{2\sqrt p}+(\hat n -\frac{n}{4})i
    \end{pmatrix}.
\end{align*}
If we denote $I(\gamma):=\{w \in \C : |w-c|=r\}$ by $(c,r)$ then we have
\begin{align}\label{eqn_isometric-circles1}
    &I(T_p^{\rho})=\left(1+\frac{4}{\sqrt p}i, \frac{4}{\sqrt p}\right), \quad I(\omega^{\rho})=\left(\frac{5}{3}, \frac{4}{3}\right),
\end{align}
\begin{align}\label{eqn_isometric-circles2}
    I(s_p(n)^{\rho})=\left(-\frac{\frac{n\hat{n}+p+1}{2\sqrt p}+(\hat n -\frac{n}{4})i}{\frac{n\hat{n}-p+1}{2\sqrt p}+(\hat n +\frac{n}{4})i}, \frac{1}{|\frac{n\hat{n}-p+1}{2\sqrt p}+(\hat n +\frac{n}{4})i|}\right) \text{ for an integer $n$ prime to $p$},
\end{align}

\

and

\begin{align}\label{eqn_isometric-circles3}
 &I(\gamma^{-1})=\overline{I(\gamma)} \text{ for any } \gamma \in \Gamma^{\rho}.
\end{align}

Note that the generating set given in Lemma \ref{lem_gen} is not minimal. Before we calculate the isometric circles for small primes $p$, we introduce several technical lemmas and propositions that reduce the number of generators of the group $R(p)$.


Let $n_i$ for $1\leq i \leq m$ be integers relatively prime to $p$. We say that $n_1, \ldots, n_m$ are dependent (or $(n_1,\ldots,n_m)$ is dependent), if $s_p(n_i) \in \langle T_p, \omega_1, s_p(n_j) : 1 \leq j \leq m, j \neq i \rangle$ for any $1 \leq i \leq m.$
From now on, we often use the notation $s_p(n,\hat{n})$ for $s_p(n)$ given in Lemma~\ref{lem_gen} whenever it is necessary or efficient to indicate what $\hat{n}$ is. We note that for two different choices $n_1$ and $n_2$ with $n_2=n_1+pr$, we have $s_p(n,n_2)=s_p(n,n_1)\omega_1 T_p^{-r} \omega_1 ^{-1}$, so the notation $s_p(n)$ omitting  $\hat{n}$ has been so far used. 


\begin{lemma}\label{lem_genred1} Let $p$ be a prime.
\begin{enumerate}[\normalfont(a)]
    \item $s_p(\pm 1)$ is contained in $\langle T_p,\omega_1 \rangle.$
    \item If $n_1 \equiv n_2 \pmod{p}$, then $n_1, n_2$ are dependent.
    \item If there exists an integer $\hat{n}\in \Z$ such that $n\hat{n}+1 =p$, then $n, -\hat{n}$ are dependent.
    \item If there exists an integer $\hat{n}\in \Z$ such that $n\hat{n}+1 =-p$, then $n, \hat{n}$ are dependent.
    \item If there exists an integer $\hat{n}\in \Z$ such that $n\hat{n}+1 =p$, then $n, -n$ are dependent.
\end{enumerate}
\end{lemma}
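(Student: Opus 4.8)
The plan is to verify each dependence by exhibiting, for the relevant index, an explicit word in $T_p$, $\omega_1$, and a single $s_p(n_j)$ that equals $s_p(n_i)$; every assertion then reduces to finite $2\times 2$ matrix arithmetic. I would first record three elementary identities to use throughout. For any $A \in \SL_2(\R)$ a direct computation gives $\omega_1 A^{-1} \omega_1^{-1} = A^T$. Next, left multiplication by $T_p$ alters only the top row, and one checks $T_p^r\, s_p(n,\hat n) = s_p(n+pr,\hat n)$; here $\hat n$ remains an admissible auxiliary integer for $n+pr$ since $n+pr \equiv n \pmod p$. Finally, as recorded just before the lemma, changing $\hat n$ to $\hat n + pr$ corresponds to right multiplication by $\omega_1 T_p^{-r}\omega_1^{-1}$. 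The last two identities together show that, for a fixed residue of $n$ modulo $p$, all admissible matrices $s_p(n,\hat n)$ lie in a single coset of $\langle T_p,\omega_1\rangle$, so in every part I am free to choose the most convenient $\hat n$. This already proves (b): writing $n_2 = n_1 + pr$ gives $s_p(n_2) = T_p^r\, s_p(n_1)$ up to such a coset adjustment, and inverting yields the converse membership.

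For (a), the admissible choices $\hat 1 = -1$ and $\widehat{-1} = 1$ make $(n\hat n + 1)/p = 0$, whence $s_p(1) = \begin{pmallmatrix} 0 & 1 \\ -1 & \sqrt p \end{pmallmatrix} = \omega_1^{-1} T_p^{-1}$ and $s_p(-1) = \begin{pmallmatrix} 0 & -1 \\ 1 & \sqrt p \end{pmallmatrix} = \omega_1 T_p$, both in $\langle T_p,\omega_1\rangle$. For (c), (d), (e) the hypothesis pins down $(n\hat n+1)/p$ to be $1$, $-1$, $1$ respectively, so $s_p(n)$ becomes an explicit matrix with diagonal $\sqrt p,\sqrt p$ or $-\sqrt p,\sqrt p$. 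In (e), $s_p(n) = \begin{pmallmatrix} \sqrt p & n \\ \hat n & \sqrt p \end{pmallmatrix}$ has determinant $1$ and $s_p(n)^{-1} = \begin{pmallmatrix} \sqrt p & -n \\ -\hat n & \sqrt p \end{pmallmatrix}$, which is precisely $s_p(-n)$ for the admissible choice $\widehat{-n} = -\hat n$; hence $s_p(-n) = s_p(n)^{-1}$. In (c), the same normalization yields $\omega_1 s_p(n)\omega_1^{-1} = \begin{pmallmatrix} \sqrt p & -\hat n \\ -n & \sqrt p \end{pmallmatrix} = s_p(-\hat n)$ with $\widehat{-\hat n} = -n$. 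In (d), with $(n\hat n + 1)/p = -1$ the transpose identity gives $\omega_1 s_p(n)^{-1}\omega_1^{-1} = s_p(n)^T = \begin{pmallmatrix} -\sqrt p & \hat n \\ n & \sqrt p \end{pmallmatrix} = s_p(\hat n)$ with $\widehat{\hat n} = n$. Each displayed element lies in $\langle T_p,\omega_1,s_p(n)\rangle$, and inverting the exhibited word gives the reverse membership, so the two indices are dependent.

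The one point requiring care — and the step I would treat most attentively — is the bookkeeping of the auxiliary integers $\hat{\,\cdot\,}$. Each time a product or conjugate is formed I must confirm two things about the output matrix: that its lower-left entry is an admissible $\widehat{(\,\cdot\,)}$ for the target index, i.e.\ satisfies the defining congruence modulo $p$, and that its upper-left entry is the correct $\frac{(\,\cdot\,)\widehat{(\,\cdot\,)}+1}{p}\sqrt p$. The normalizations $(n\hat n + 1)/p \in \{0,\pm 1\}$ forced by the hypotheses of (a), (c), (d), (e) are exactly what make these checks succeed, while the coset remark underlying (b) guarantees that any residual mismatch in the choice of $\hat n$ is absorbed harmlessly into $\langle T_p,\omega_1\rangle$. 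No further input is needed beyond these conventions and the three identities above.
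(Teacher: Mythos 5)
Your proof is correct and follows essentially the same route as the paper: explicit $2\times 2$ matrix identities in $T_p$, $\omega_1$ and $s_p(n,\hat n)$, combined with the observation (also made just before the lemma) that changing the auxiliary integer $\hat n$ only moves $s_p(n,\hat n)$ by right multiplication by an element of $\langle T_p,\omega_1\rangle$, so membership statements are insensitive to its choice. Your specific words are harmless variants of the paper's (plain $\omega_1$-conjugation in (c) versus the paper's $\omega_1 s_p(n,\hat n)T_p\omega_1^{-1}$, the transpose identity in (d)), and in (a) your $s_p(1,-1)=\omega_1^{-1}T_p^{-1}$ is in fact the correct version of the paper's displayed $s_p(1,-1)=\omega_1^{-1}T_p$, which has a sign typo.
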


\begin{proof}
The first and second assertion easily follows from the calculations $s_p(1,-1)=\omega_1^{-1} T_p$, $s_p(-1,1)=\omega T_p$ and $s_p(n_2)=T_p^r s_p(n_1)$, where $n_2=n_1+pr.$ To show the remaining assertions, one can check that
\begin{align*}
    \omega_1 s_p(n,\hat n)T_p \omega_1^{-1}&=s_p(-\hat n,-n-p) \quad \text{ if $n\hat n+1=p$,}\\
    \omega_1 s_p(n, \hat n) T_p \omega_1&=s_p(\hat n,n-p) \quad \text{ if $n\hat n+1=-p$,}\\
    s_p(-n,-\hat n)&=s_p(n,\hat n)^{-1} \quad \text{ if $n\hat n+1=p$.}
\end{align*}
The assertion follows from the definition of dependence.
\end{proof}

Define
\begin{align*}
    D_p^+:=\{(n_1,n_2,n_3) \in \Z^3 : &\gcd(n_i,p)=1, n_i \neq \pm 1 \text{ for any } 1 \leq i \leq 3, \\
    &\text{and there exists an integer } \hat{n_1} \in \Z \text{ such that for } \\ &a_1:=\frac{n_1 \hat{n_1}+1}{p}, \text{ }
    n_2a_1+n_1=1 \text{ and } n_3=-\hat{n_1}n_2\},
\end{align*}
\begin{align*}
    D_p^-:=\{(n_1,n_2,n_3) \in \Z^3 : &\gcd(n_i,p)=1, n_i \neq \pm 1 \text{ for any } 1 \leq i \leq 3, \\
    &\text{and there exists an integer } \hat{n_1} \in \Z \text{ such that for } \\ &a_1:=\frac{n_1 \hat{n_1}+1}{p}, \text{ }
    n_2a_1-n_1=1 \text{ and } n_3=\hat{n_1}n_2\}.
\end{align*}

\begin{lemma}\label{lem_genDp}
If $(n_1,n_2,n_3) \in D_p^+ \cup D_p^-$, then $n_1,n_2,n_3$ are dependent.
\end{lemma}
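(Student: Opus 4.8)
The plan is to produce a single matrix identity relating $s_p(n_1), s_p(n_2), s_p(n_3)$ in which each of the three factors occurs exactly once while every other factor lies in $\langle T_p,\omega_1\rangle$, and then to solve this identity for each factor in turn, thereby obtaining all three memberships required by the definition of dependence. As a preliminary reduction I would note that dependence is insensitive to the chosen inverses $\hat n_i$: by the relation $s_p(n,\hat n')=s_p(n,\hat n)\,\omega_1 T_p^{-r}\omega_1^{-1}$ (with $\hat n'=\hat n+pr$) recorded before Lemma~\ref{lem_genred1}, changing an $\hat n_i$ alters $s_p(n_i)$ only by right multiplication by an element of $\langle T_p,\omega_1\rangle$. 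Hence it suffices to prove dependence for one convenient choice of each $\hat n_i$; I fix $\hat n_1$ as in the definition of $D_p^+$ (so that $a_1:=\frac{n_1\hat n_1+1}{p}\in\Z$) and choose $\hat n_2,\hat n_3$ compatibly, the residue of $\hat n_3$ being forced mod $p$ by $n_3=-\hat n_1 n_2$ together with $n_i\hat n_i\equiv -1\pmod p$.

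The heart of the argument is an explicit identity of the shape
$$ s_p(n_1)\,\omega_1\, s_p(n_2) \;=\; W\, s_p(n_3)\, W', \qquad W, W' \in \langle T_p,\omega_1\rangle, $$
where $W,W'$ are words in $T_p$ and $\omega_1$ whose exponents are read off from $n_1,\hat n_1,n_2$. Since $s_p(n_i)\in R(p)^-$ while $T_p\in R(p)^+$ and $\omega_1\in R(p)^-$, both sides lie in $R(p)^-$, so the identity is consistent with the coset structure. I would verify it by a direct $2\times 2$ computation: expand the left-hand side and substitute the defining relations $p a_1 = n_1\hat n_1 + 1$, $\,n_2 a_1 + n_1 = 1$, $\,n_3 = -\hat n_1 n_2$ (and $p a_i = n_i\hat n_i+1$ for the other indices) to match it against $W s_p(n_3) W'$ entrywise. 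For the case $D_p^-$ the identical scheme applies with the sign changes $n_2 a_1 - n_1 = 1$ and $n_3 = \hat n_1 n_2$, yielding the analogous relation.

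Once such an identity is in hand, dependence is immediate. As $W$ and $W'$ are invertible inside $\langle T_p,\omega_1\rangle$ and each $s_p(n_i)$ occurs exactly once and linearly, I can isolate each factor:
$$ s_p(n_3) = W^{-1} s_p(n_1)\,\omega_1\, s_p(n_2)\,W'^{-1},\quad s_p(n_1) = W s_p(n_3) W'\, s_p(n_2)^{-1}\omega_1^{-1},\quad s_p(n_2) = \omega_1^{-1} s_p(n_1)^{-1} W s_p(n_3) W'. $$
Each right-hand side lies in the subgroup generated by $T_p,\omega_1$ and the two remaining $s_p$'s, which is precisely the statement that $n_1,n_2,n_3$ are dependent. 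The hypotheses $\gcd(n_i,p)=1$ and $n_i\neq\pm1$ play no role in this derivation beyond guaranteeing (via Lemma~\ref{lem_genred1}(a)) that the $s_p(n_i)$ are genuine generators, so the statement is nonvacuous.

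The main obstacle will be the second step, namely pinning down the precise words $W,W'$ together with the compatible choices of $\hat n_2,\hat n_3$ so that the $2\times 2$ identity closes up. The entries of $s_p(n_1)\,\omega_1\, s_p(n_2)$ involve $a_2$ and $\hat n_2$, which are \emph{not} constrained by the $D_p^+$ relations, so the correct $W,W'$ must be chosen so as to absorb exactly these free quantities; confirming the match is a routine but delicate bookkeeping calculation rather than a conceptual difficulty.
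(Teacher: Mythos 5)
Your overall strategy is the paper's: the paper also proves the lemma by exhibiting a single matrix identity in which each $s_p(n_i)$ occurs exactly once alongside elements of $\langle T_p,\omega_1\rangle$, and dependence then follows by solving for each factor exactly as in your last step. But the identity you committed to has the wrong shape, and the difference is fatal rather than cosmetic. The paper's identity is
\[
T_p\,\omega_1\, s_p(n_1,\hat n_1)\, s_p(n_2,\hat n_2)\;=\;s_p\bigl(-\hat n_1 n_2,\; a_1a_2p+n_1\hat n_2\bigr)
\]
for the case $D_p^+$ (and $T_p^{-1}\omega_1^{-1}s_p(n_1,\hat n_1)s_p(n_2,\hat n_2)=s_p(\hat n_1n_2,-a_1a_2p-n_1\hat n_2)$ for $D_p^-$): the auxiliary word sits entirely on the left, there is \emph{no} $\omega_1$ between the two $s_p$ factors, and $W'=I$. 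In particular, the ``free quantities'' $a_2,\hat n_2$ that worried you are not absorbed into $W,W'$ at all --- they are absorbed into the second argument $\hat n_3$ of $s_p(n_3,\hat n_3)$, using precisely the flexibility in the choice of $\hat n_3$ that you noted in your own preliminary reduction.

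Your shape $s_p(n_1)\,\omega_1\,s_p(n_2)=W\,s_p(n_3)\,W'$ is in fact provably unachievable in general, so the ``routine but delicate bookkeeping'' you defer can never close. Reducing entries mod $p$ gives a homomorphism $\Phi$ from $R(p)$ onto the generalized dihedral group on $G_p:=(\Z/p)^{\times}/\{\pm1\}$: send a matrix $\begin{pmatrix} a & b\sqrt p \\ c\sqrt p & d\end{pmatrix}\in R(p)^+$ (which has $ad\equiv 1 \bmod p$) to the rotation indexed by the class of $a$, and $\begin{pmatrix} a\sqrt p & b \\ c & d\sqrt p\end{pmatrix}\in R(p)^-$ (which has $bc\equiv -1$) to the reflection indexed by the class of $c$; the four type-by-type multiplication rules are checked directly. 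Then $\Phi(T_p)$ is trivial and $\Phi(\omega_1)$ is the reflection indexed by $1$, so $\Phi\langle T_p,\omega_1\rangle$ has order $2$, while $\Phi(s_p(n,\hat n))$ is the reflection indexed by $\hat n$. One computes that $\Phi\bigl(s_p(n_1)\omega_1 s_p(n_2)\bigr)$ is the reflection indexed by $\hat n_1\hat n_2$, whereas every element of $\Phi\langle T_p,\omega_1\rangle\,\Phi(s_p(n_3))\,\Phi\langle T_p,\omega_1\rangle$ is a rotation or a reflection indexed by $\hat n_3^{\pm1}$ with $\hat n_3\equiv \hat n_1^{-1}\hat n_2$. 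These classes differ unless $\hat n_1^2\equiv\pm1$ or $\hat n_2^2\equiv\pm1\pmod p$: concretely, for $(n_1,n_2,n_3)=(3,2,8)\in D_{11}^+$ with $\hat n_1=-4$, $a_1=-1$, $\hat n_2=5$, the left side maps to the reflection of class $2$ while the double coset of $s_{11}(8)$ only meets reflections of classes $3$ and $4$. So the $\omega_1$ must be moved out of the middle; with the paper's placement the same invariant is consistent ($\Phi(T_p\omega_1 s_p(n_1)s_p(n_2))$ is the reflection indexed by $\hat n_1^{-1}\hat n_2\equiv\hat n_3$), and the identity is verified by the direct $2\times 2$ computation you envisioned, e.g.\ $s_p(n_1,\hat n_1)s_p(n_2,\hat n_2)$ has upper-right entry $(a_1n_2+n_1)\sqrt p=\sqrt p$ by the hypothesis $n_2a_1+n_1=1$, after which left multiplication by $T_p\omega_1$ produces a matrix literally of the form $s_p(-\hat n_1n_2,\cdot)$. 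Since the explicit identity is the entire content of the paper's proof, deferring it --- and committing to a shape that cannot hold --- constitutes a genuine gap.
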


\begin{proof}
Assume $(n_1,n_2,n_3) \in D_p^+$. Let $\hat{n_2}$ be an arbitrary integer that satisfies $n_2 \hat{n_2}+1 \pmod{p}$, and set $a_2:=\frac{n_2 \hat{n_2}+1}{p}.$ Under the assumption $n_2a_1+n_1=1$, one can see that
\begin{align*}
    T_p \omega_1 s_p(n_1,\hat{n_1})s_p(n_2,\hat{n_2})=s_p(-\hat{n_1}n_2,a_1 a_2p+n_1 \hat{n_2}).
\end{align*}
Similarly, if $(n_1,n_2,n_3) \in D_p^-$, then
\begin{align*}
    T_p^{-1} \omega_1^{-1} s_p(n_1,\hat{n_1})s_p(n_2,\hat{n_2})=s_p(\hat{n_1}n_2,-a_1 a_2p-n_1 \hat{n_2}).
\end{align*}
Therefore $n_1,n_2,n_3$ are dependent.
\end{proof}

\begin{lemma}\label{lem_genred2}The following statements hold true:
\begin{enumerate}[\normalfont(a)]
    \item If $(n_1,n_2,n_3) \in D_p^+ \cup D_p^-$, then $(-n_1,-n_2,-n_3) \in D_p^+ \cup D_p^-$.
    \item If $(n_1,n_2,n_3) \in D_p^+$, then $(\hat{n_1},n_3-p,n_2') \in D_p^-$ for some integer $n_2'$ such that $n_2' \equiv n_2 \pmod{p}.$
    \item If $(n_1,n_2,n_3) \in D_p^-$, then $(\hat{n_1},n_3+p,n_2') \in D_p^-$ for some integer $n_2'$ such that $n_2' \equiv n_2 \pmod{p}.$
\end{enumerate}
Consequently, if $(n_1,n_2,n_3) \in D_p^+ \cup D_p^-$, then all of triples $(n_1,n_2,n_3)$, $(-n_1,-n_2,-n_3)$ and $(\hat{n_1},n_3,n_2)$ are dependent.
\end{lemma}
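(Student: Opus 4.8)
The plan is to prove (a), (b), (c) by the same bookkeeping move used in Lemma~\ref{lem_genDp}: for each transformed triple I would write down an explicit witness $\hat{M_1}$ for its first coordinate and then verify the three defining conditions of membership in $D_p^+$ or $D_p^-$ in turn --- that $M_1\hat{M_1} \equiv -1 \pmod p$ (so that $A_1 := (M_1\hat{M_1}+1)/p \in \Z$), that the middle relation $M_2 A_1 \pm M_1 = 1$ holds, and that the third coordinate satisfies $M_3 = \mp \hat{M_1} M_2$. The only arithmetic inputs are the congruence $n_1\hat{n_1} \equiv -1 \pmod p$ that defines the original witness together with the two hypotheses $n_2 a_1 \pm n_1 = 1$ and $n_3 = \mp\hat{n_1} n_2$.

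For part (a) I would use that $-\hat{n_1}$ is a legitimate witness for $-n_1$, since $(-n_1)(-\hat{n_1})=n_1\hat{n_1}\equiv -1\pmod p$, so the associated value $A_1$ coincides with $a_1$. Negating all three coordinates then reverses the sign of the $M_1$-term in the middle relation, which is exactly what should interchange the two families $D_p^+$ and $D_p^-$; the third-coordinate identity transforms correctly because both $\hat{M_1}$ and $M_2$ change sign. Parts (b) and (c) are of a different flavour: there the new first coordinate is $\hat{n_1}$, for which $n_1$ (or any integer congruent to it mod $p$) is a valid witness, again because $n_1\hat{n_1}\equiv -1\pmod p$. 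The $\mp p$ shift in the middle coordinate is precisely the ambiguity of replacing a witness by a congruent one modulo $p$, and after choosing it so that the middle relation closes up, the new third coordinate comes out equal to $\hat{n_1}(n_3\mp p)$, which I would check is $\equiv n_2 \pmod p$ from $n_3\equiv \mp\hat{n_1} n_2$ and $n_1\hat{n_1}\equiv -1$; this is the source of the ``$n_2'$ with $n_2'\equiv n_2\pmod p$'' in the statement.

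The ``Consequently'' clause I would then obtain by assembly. Parts (a)--(c) place each transformed triple in $D_p^+\cup D_p^-$, so Lemma~\ref{lem_genDp} makes each of them dependent; and since adding a multiple of $p$ to any index only multiplies the corresponding $s_p$ by a power of $T_p$ (Lemma~\ref{lem_genred1}(b)), the congruence-level conclusions of (b), (c) upgrade to the dependence of the clean triples $(\hat{n_1}, n_3, n_2)$ and, via (a), $(-n_1,-n_2,-n_3)$.

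I expect the main obstacle to be the sign and witness bookkeeping rather than any conceptual difficulty. The integrality and third-coordinate conditions pin down $\hat{M_1}$ essentially uniquely up to its residue mod $p$, and the real work is confirming that this forced choice is simultaneously compatible with the middle linear relation landing exactly on $1$; it is this compatibility that should dictate which of $D_p^+, D_p^-$ the image lands in and that forces the specific $+p$ versus $-p$ shift recorded in (b) and (c). Care is also needed to confirm that the excluded values $n_i\neq\pm 1$ and the coprimality to $p$ are preserved under each transformation, since these are part of the definition of $D_p^\pm$.
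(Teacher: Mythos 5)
Your plan is, in outline, the paper's own proof: the same witness choices (for (a), $-\hat{n_1}$, so that the associated value is again $a_1$; for (b) and (c), $n_1$ as the witness $\hat{m_1}$ of the new first coordinate $m_1=\hat{n_1}$, giving $b_1=\frac{m_1\hat{m_1}+1}{p}=a_1$), the same mechanism forcing the $\mp p$ shift in the middle coordinate, the same device of replacing $n_2$ by the exact product $n_2'$, and the same assembly of the final clause from Lemma~\ref{lem_genDp} together with the mod-$p$ insensitivity of Lemma~\ref{lem_genred1}(b).

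However, two of your concrete claims would fail when checked, and they are exactly at the ``sign and witness bookkeeping'' you flag as the crux. First, the new third coordinate in (b) and (c) is $n_2'=\hat{m_1}m_2=n_1(n_3\mp p)$, \emph{not} $\hat{n_1}(n_3\mp p)$: the defining relation involves the hat of the new first coordinate, which is $n_1$, and indeed $n_1(n_3-p)\equiv -n_1\hat{n_1}n_2\equiv n_2 \pmod{p}$, whereas $\hat{n_1}(n_3-p)\equiv -\hat{n_1}^2 n_2 \pmod{p}$ is congruent to $n_2$ only when $n_1^2\equiv -1 \pmod{p}$. Second, the middle relation for each transformed triple lands on $-1$, not on $1$: in (b), with $b_1=a_1$, one computes $(n_3-p)b_1+\hat{n_1}=-\hat{n_1}(n_2a_1+n_1)+\hat{n_1}-1=-1$ (this is the paper's computation), and in (a) negation flips the \emph{whole} left-hand side, $(-n_2)a_1+(-n_1)=-1$, rather than reversing only the $M_1$-term as you assert. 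This is not a harmless discrepancy: the printed condition $n_2a_1-n_1=1$ in the definition of $D_p^-$ is a sign typo for $n_2a_1+n_1=-1$ (the matrix identity $T_p^{-1}\omega_1^{-1}s_p(n_1,\hat{n_1})s_p(n_2,\hat{n_2})=s_p(\hat{n_1}n_2,-a_1a_2p-n_1\hat{n_2})$ in the proof of Lemma~\ref{lem_genDp} requires the latter), and under the printed condition part (a) is even false: $(3,2,8)\in D_{11}^+$ via $\hat{n_1}=-4$, $a_1=-1$, yet no integer witness places $(-3,-2,-8)$ in the printed $D_{11}^+\cup D_{11}^-$ (the forced values of $b_1$ yield $\hat{m_1}=23/3$ and $-10/3$). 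So your verification goes through only after correcting the $D_p^-$ relation to $n_2a_1+n_1=-1$ and the witness product to $n_1(n_3\mp p)$; your insistence that the relation close up ``exactly on $1$'' is precisely where the execution would have broken down. Your caution about the side conditions is warranted, incidentally: $n_3-p=\pm 1$ can occur, in which case the transformed triple misses $D_p^-$ literally, though the dependence conclusion survives via Lemma~\ref{lem_genred1}(a) --- a point the paper passes over silently.
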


\begin{proof}
(a) It follows from the definition of $D_p^{\pm}.$

(b) Let $(m_1,m_2,m_3)=(\hat{n_1},n_3-p,n_2)$. For an integer $b_1:=\frac{m_1 \hat{m_1}+1}{p}=\frac{\hat{n_1}\hat{\hat{n_1}}+1}{p}=a_1,$ we have
\begin{align*}
    m_2b_1+m_1&=(-\hat{n_1} n_2-p)a_1+\hat{n_1} \\
    &=(-\hat{n_1}n_2 a_1-n_1\hat{n_1}-1)+\hat{n_1} \\
    &=-\hat{n_1}(n_2a_1+n_1)+\hat{n_1}-1 \\
    &=-1,
\end{align*}
and $m_3=n_2 \equiv \hat{\hat{n_1}}(-\hat{n_1}n_2) \equiv \hat{m_1}m_2 \pmod{p}.$ Thus $n_2=m_3=\hat{m_1}m_2+pr$ for some $r \in \Z.$ If we let $n_2':=\hat{m_1}m_2,$ then $n_2 \equiv n_2' \pmod{p}$ and $(m_1,m_2,n_2')=(\hat{n_1},n_3-p,n_2') \in D_p^-.$

(c) The argument of the proof is in parallel with the proof of (b).
\end{proof}

Let $n$ be an integer. We say that $n$ is {\it redundant}, if there exist integers  $n_j$ and $\epsilon, \epsilon_j \in \{ \pm 1\}$ for $1 \leq j \leq r$ such that $|n_j|<|n|$ for all $j$ and $\epsilon_1 n_1, \ldots, \epsilon_r n_r, \epsilon n$ are dependent.

\begin{proposition} For a positive integer $d$, the following two statements hold true:
\begin{enumerate}[\normalfont(a)]
    \item If $d$ divides $p-1$ and $d > \sqrt{p-1},$ then $d$ is redundant.
    \item If $d$ divides $p+1$ and $d> \sqrt{p+1}$, then $d$ is redundant.
\end{enumerate}
\end{proposition}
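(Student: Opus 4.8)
The plan is to realize $d$ as the ``large'' entry $n=d$ in the two-term dependence relations of Lemma~\ref{lem_genred1}(c),(d), so that the redundancy witness is a single companion integer whose absolute value is forced below $d$ by the square-root hypothesis. Throughout, I note first that a divisor $d$ of $p\mp 1$ is automatically prime to $p$, since $p\mp 1\equiv \mp 1\pmod p$; hence $s_p(d)$ is defined and the notion of dependence applies to $d$.

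For part (a), I would write $p-1=de$ with $e:=(p-1)/d\in\Z_{>0}$. The hypothesis $d>\sqrt{p-1}=\sqrt{de}$ gives $d^2>de$, hence $e<d$. Now set $\hat{n}:=e$; then $d\hat{n}+1=de+1=p$, so Lemma~\ref{lem_genred1}(c) applies with $n=d$ and shows that $d$ and $-\hat{n}=-e$ are dependent. Taking $r=1$, $n_1=e$, $\epsilon_1=-1$ and $\epsilon=1$ in the definition of redundancy, the tuple $(-e,d)$ is dependent and $|n_1|=e<d$, so $d$ is redundant. (When $e=1$ the companion is $-1$, and $s_p(-1)\in\langle T_p,\omega_1\rangle$ by Lemma~\ref{lem_genred1}(a), so the dependence and the strict inequality $1<d$ both still hold.)

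Part (b) is the same argument with the $-p$ relation. I would write $p+1=df$ with $f:=(p+1)/d$; the hypothesis $d>\sqrt{p+1}=\sqrt{df}$ forces $f<d$. The sign is the only point requiring care: choosing $\hat{n}:=-f$ gives $d\hat{n}+1=-df+1=-(p+1)+1=-p$, so Lemma~\ref{lem_genred1}(d) applies with $n=d$ and yields that $d$ and $\hat{n}=-f$ are dependent. Since $|-f|=f<d$, the tuple $(-f,d)$ witnesses that $d$ is redundant.

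There is no serious obstacle here once the bookkeeping is arranged; the essential observations are that the divisibility $d\mid p\mp1$ is exactly what makes the cofactor $(p\mp1)/d$ an integer, so that the \emph{exact} relation $n\hat{n}+1=\pm p$ demanded by Lemma~\ref{lem_genred1}(c),(d) holds rather than a mere congruence, and that the strict inequality $d>\sqrt{p\mp1}$ is precisely what pushes that cofactor strictly below $d$. The triple machinery of Lemmas~\ref{lem_genDp} and \ref{lem_genred2} is not needed for these two statements; the only place demanding attention is selecting the sign of $\hat{n}$ in part (b) so as to land on the $-p$ relation instead of the $+p$ one.
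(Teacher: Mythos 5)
Your proof is correct and follows essentially the same route as the paper, which likewise writes $d\cdot\frac{p\mp1}{d}=p\mp1$ and invokes Lemma~\ref{lem_genred1}(c),(d) to get the dependence of $d$ with the cofactor $(p\mp1)/d$, whose absolute value is below $\sqrt{p\mp1}<d$. Your extra care with the sign choice in (b) and the coprimality of $d$ to $p$ only makes explicit what the paper leaves implicit (it proves (a) and declares (b) analogous).
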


\begin{proof}
We only prove (a), since the proof of (b) is obtained by the same argument. Let $d'$ be an integer satisfying $dd'=p-1$. By Lemma \ref{lem_genred1}, $d$ and $d'$ are dependent and $|d'|<\sqrt{p-1}<|d|,$ so $d$ is redundant.
\end{proof}

\begin{proposition}\label{prop_p-n/d is redundant}
Let $d$ be a positive integer relatively prime to $p$ such that $d+1 <\sqrt{p},$ and let $a$ be an integer satisfying $ap \equiv 1 \pmod{d}.$
\begin{enumerate}[\normalfont(a)]
    \item If there exists $n \in \Z$ such that $an+d=1,$ then $n \equiv p \pmod{d}$ and $\frac{p-n}{d}$ is redundant.
    \item If there exists $n \in \Z$ such that $an+d=-1,$ then $n \equiv -p \pmod{d}$ and $\frac{p+n}{d}$ is redundant.
\end{enumerate}
\end{proposition}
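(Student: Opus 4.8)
The plan is to exhibit $m:=\tfrac{p-n}{d}$ (resp. $\tfrac{p+n}{d}$) as the \emph{largest} index in a single dependent triple drawn from $D_p^+$ (resp. $D_p^-$), so that redundancy follows at once from the definition together with Lemma~\ref{lem_genDp}. First I would record the congruence. Since $ap\equiv 1\pmod d$, the integer $a$ is a unit modulo $d$; in particular $\gcd(a,d)=1$. In part (a) the hypothesis $an+d=1$ gives $an\equiv 1\equiv ap\pmod d$, and cancelling the unit $a$ yields $n\equiv p\pmod d$, so that $m=\tfrac{p-n}{d}\in\Z$. (In part (b), $an+d=-1$ gives $an\equiv -1\equiv -ap\pmod d$, hence $n\equiv -p\pmod d$ and $m=\tfrac{p+n}{d}\in\Z$.)

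The key step is to choose the auxiliary integer attached to $s_p(d)$ cleverly. I would set $\hat d:=\tfrac{ap-1}{d}$, which is an integer precisely because $ap\equiv 1\pmod d$ and which satisfies $d\hat d=ap-1\equiv -1\pmod p$, so it is a legitimate partner of $d$ in $s_p(d,\hat d)$. Its virtue is that the resulting leading entry is $a_1=\tfrac{d\hat d+1}{p}=a$. With this choice the defining relation of $D_p^+$, namely $a_1 n_2+n_1=1$, becomes exactly the hypothesis $an+d=1$; thus $(d,n,n_3)\in D_p^+$ with $n_3=-\hat d\,n$, and Lemma~\ref{lem_genDp} makes $d,n,n_3$ dependent. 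For part (b) the same $\hat d$ turns the $D_p^-$ relation $a_1 n_2-n_1=1$ into $an_2-d=1$; since $an=-1-d$ this forces $n_2=-n$, giving $(d,-n,n_3)\in D_p^-$ with $n_3=-\hat d\,n$.

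It then remains to connect $n_3$ to $m$ and to compare sizes. Reducing modulo $p$ one has $\hat d\equiv -d^{-1}$, so $n_3=-\hat d\,n\equiv d^{-1}n\pmod p$; since $dm\equiv -n$ (case (a)) resp.\ $dm\equiv n$ (case (b)), this gives $n_3\equiv -m$ resp.\ $n_3\equiv m\pmod p$. Lemma~\ref{lem_genred1}(b) then lets me replace $n_3$ by $\mp m$ and chain the memberships to conclude that $\{d,n,\pm m\}$ (resp. $\{d,-n,m\}$) is itself dependent. Finally I would verify $|d|,|n|<|m|$, so that $m$ is genuinely the largest entry and therefore redundant. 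Since $|a|\ge 1$ one has $|n|\le d-1$ in (a) (resp. $|n|\le d+1$ in (b)), while $|m|=\tfrac{p\mp n}{d}$; in every sign case the required inequality $|m|>\max(d,|n|)$ reduces to $p>(d+1)^2$, which is exactly the hypothesis $d+1<\sqrt p$.

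This size comparison is the crux of the argument: it is the one place where the bound $d+1<\sqrt p$ enters, and it is what guarantees that $m$ qualifies as \emph{redundant} rather than merely dependent on indices of comparable magnitude. The remaining bookkeeping---that the three entries are prime to $p$ and different from $\pm 1$, with the degenerate cases $d=1$ and $n=\pm1$ absorbed by Lemma~\ref{lem_genred1}(a)---is routine and I would dispatch it quickly.
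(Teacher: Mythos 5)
Your proposal matches the paper's proof essentially step for step: the same choice $\hat d=\tfrac{ap-1}{d}$ so that $a_1=a$, the same appeal to Lemma~\ref{lem_genDp} to produce the dependent triple $(d,n,-\hat d\,n)\in D_p^{+}$ (resp.\ $D_p^{-}$), the same identification $-\hat d\,n\equiv \mp\tfrac{p\mp n}{d}\pmod p$ combined with Lemma~\ref{lem_genred1}(b), and the same size comparison where $(d+1)^2<p$ makes $\tfrac{p\mp n}{d}$ the strictly largest entry and hence redundant. The only differences are cosmetic: you spell out part (b) and the sign cases explicitly, where the paper assumes $n\le 0$ and dismisses (b) with ``follows similarly.''
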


\begin{proof}
We only prove(a). The proof of (b) follows similarly.

By the assumption of the proposition, we have
\begin{align*}
    1=an+d \equiv p^{-1}n \pmod{d},
\end{align*}
so $n \equiv p \pmod{d}.$ Since $n \leq 0,$ one has $n < p$ and $\max \{d,|n|\}=d$ or $d+1.$ Also we have $(d+1)^2<p$, hence
\begin{align*}
    (d+1)^2+n <p
\end{align*}
and so $d+1 < \frac{p-n}{d}.$ In other words, $\frac{p-n}{d}$ is the largest positive integer among the three integers $\frac{p-n}{d},d$ and $|n|.$

Let $n_1=d$, $\hat{n}_1=\frac{ap-1}{d}$ and $n_2=n.$ By Lemma \ref{lem_genDp}, $n_1,n_2,-\hat{n}_1 n_2$ are dependent and $-\hat{n}_1 n_2 \equiv -\frac{p-n}{d} \pmod{p}.$ Therefore $|-\frac{p-n}{d}|=\frac{p-n}{d}$ is redundant.
\end{proof}

We immediately get the following two corollaries by substituting $a=\pm1$ and $a=\pm 2$ in Proposition~\ref{prop_p-n/d is redundant}, respectively.

\begin{corollary}
Let $d$ be a positive integer such that $d+1 < \sqrt p.$
\begin{enumerate}[\normalfont(a)]
    \item If $d\mid p-1$, then $\frac{p-1}{d} \pm 1$ are redundant.
    \item If $d\mid p+1$, then $\frac{p+1}{d} \pm 1$ are redundant.
\end{enumerate}
\end{corollary}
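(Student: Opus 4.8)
The plan is to read the statement off directly from Proposition~\ref{prop_p-n/d is redundant} by feeding it the correct auxiliary integer $a$: I would take $a=1$ for part (a) and $a=-1$ for part (b), and in each case invoke \emph{both} clauses of that proposition in order to recover the two shifts $\pm 1$ at once.

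For part (a), since $d\mid p-1$ we have $p\equiv 1\pmod d$, so $a=1$ meets the congruence $ap\equiv 1\pmod d$ demanded by the proposition; the remaining hypotheses hold because $d+1<\sqrt p$ is assumed and $\gcd(d,p)=1$ follows from $d\mid p-1$ with $p$ prime (indeed $p\nmid d$ since $0<d<p$). Clause (a) then applies with the integer $n=1-d$ solving $n+d=1$, giving that $\frac{p-n}{d}=\frac{p-1}{d}+1$ is redundant, while clause (b) applies with $n=-1-d$ solving $n+d=-1$, giving that $\frac{p+n}{d}=\frac{p-1}{d}-1$ is redundant.

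For part (b), the divisibility $d\mid p+1$ gives $p\equiv -1\pmod d$, so this time $a=-1$ satisfies $ap\equiv 1\pmod d$, and again $\gcd(d,p)=1$ and $d+1<\sqrt p$ hold. Clause (a) now applies with $n=d-1$ (from $-n+d=1$), yielding that $\frac{p-n}{d}=\frac{p+1}{d}-1$ is redundant, and clause (b) applies with $n=d+1$ (from $-n+d=-1$), yielding that $\frac{p+n}{d}=\frac{p+1}{d}+1$ is redundant.

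The only genuinely delicate point is the sign and arithmetic bookkeeping needed to match each output of the proposition with the claimed value $\frac{p-1}{d}\pm 1$ or $\frac{p+1}{d}\pm 1$, together with the routine verification that each chosen $a$ indeed solves $ap\equiv 1\pmod d$ and that the corresponding integer $n$ exists (which is automatic for $a=\pm 1$). Once these checks are in place the corollary drops out with no further work, which is why it can be stated as an immediate consequence.
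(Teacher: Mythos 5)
Your proposal is correct and takes essentially the same route as the paper, which proves this corollary in one line by substituting $a=\pm 1$ into Proposition~\ref{prop_p-n/d is redundant}; your choices $a=1$ for part (a) and $a=-1$ for part (b), each run through both clauses of that proposition, are exactly what this substitution unwinds to. Your sign bookkeeping ($n=1-d$ and $n=-1-d$ giving $\frac{p-1}{d}+1$ and $\frac{p-1}{d}-1$; $n=d-1$ and $n=d+1$ giving $\frac{p+1}{d}-1$ and $\frac{p+1}{d}+1$) checks out, as do the hypotheses $ap\equiv 1\pmod{d}$ and $\gcd(d,p)=1$.
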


\begin{corollary}
Let $d$ be a positive integer.
\begin{enumerate}[\normalfont(a)]
    \item If $d\mid p-2$ and $2 \leq d < \sqrt{p-2},$ then $\frac{p-2}{d}$ is redundant.
    \item If $d\mid p+2$ and $2 \leq d < \sqrt{p+2},$ then $\frac{p+2}{d}$ is redundant.
\end{enumerate}
\end{corollary}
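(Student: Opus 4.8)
The plan is to reuse the mechanism behind Proposition~\ref{prop_p-n/d is redundant}, but applied with the small fixed value $n=\pm 2$ (rather than with a fixed value of $a$). The first observation is a parity constraint: since $p$ is an odd prime, $p-2$ and $p+2$ are both odd, so any divisor $d\mid p\mp 2$ is odd, whence $d\geq 3$. This oddness is exactly what guarantees that the relevant inverse of $p$ modulo $d$ admits the integral representative used below, and the bound $d\geq 3$ will make the size comparison at the end come out cleanly.

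For part (a) I would put $m:=\frac{p-2}{d}$ and choose $a:=\frac{1-d}{2}$ (an integer because $d$ is odd) together with $n:=2$. Using $p\equiv 2\pmod d$ one checks $ap\equiv 1\pmod d$ and $an+d=1$, so that setting $n_1:=d$, $\hat{n}_1:=\frac{ap-1}{d}$ and $n_2:=2$ the triple $(d,\,2,\,-2\hat{n}_1)$ satisfies the defining relations of $D_p^{+}$ (here $a_1=\frac{n_1\hat{n}_1+1}{p}=a$, and the entries are coprime to $p$ and different from $\pm 1$, using $d\geq 3$ and $\hat n_1\neq 0$). Lemma~\ref{lem_genDp} then makes $d,\,2,\,-2\hat{n}_1$ dependent, and since $-2\hat{n}_1\equiv 2d^{-1}\equiv -m\pmod p$, Lemma~\ref{lem_genred1}(b) lets me replace the last entry by $-m$, so that $d,\,2,\,-m$ are dependent. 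Part (b) is the mirror image: with $m:=\frac{p+2}{d}$, $n:=-2$ and $a:=\frac{d-1}{2}$ one uses $p\equiv -2\pmod d$ to land on the triple $(d,\,-2,\,2\hat{n}_1)$ in $D_p^{+}$, whose last entry reduces to $2\hat n_1\equiv -2d^{-1}\equiv -m\pmod p$.

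It then remains to check that $m$ is strictly the largest of $d,2,m$, which is precisely what redundancy of $m$ requires. The inequality $m>d$ is equivalent to $d^2<p-2$ in part (a) and to $d^2<p+2$ in part (b), i.e.\ to the stated hypotheses $d<\sqrt{p-2}$ and $d<\sqrt{p+2}$; and $m>2$ is then automatic since $m>d\geq 3$. Thus $\{d,2,-m\}$ (with the obvious signs) exhibits $m$ as redundant.

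The step I expect to require the most care --- and the reason one cannot simply quote Proposition~\ref{prop_p-n/d is redundant} verbatim --- is this final size comparison. That proposition is proved under $d+1<\sqrt p$, whereas here we assume only $d<\sqrt{p\mp 2}$, which is strictly weaker, so its internal estimate $(d+1)^2<p$ is unavailable. The saving grace is that for the particular small choice $n=\pm 2$ the largeness of $m$ reduces to $m>\max(d,2)=d$, and this is \emph{equivalent} to the hypothesis of the corollary. Hence the argument must be re-run with $n=\pm 2$ in hand, verifying membership in $D_p^{+}$ and the single inequality $m>d$ directly, rather than invoking the proposition as a black box.
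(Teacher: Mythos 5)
Your proof is correct and follows essentially the paper's own route: the paper obtains this corollary by substituting into Proposition~\ref{prop_p-n/d is redundant} (its phrase ``substituting $a=\pm2$'' really amounts to taking $n=\pm2$ with $a=\frac{1\mp d}{2}$, exactly the choice you make), and your verification of membership in $D_p^+$, the reduction $-\hat{n}_1 n_2\equiv -\frac{p\mp2}{d}\pmod p$ via Lemma~\ref{lem_genred1}(b), and the size comparison $m>d\geq 3$ reproduce the proposition's proof for this specific $n$. Your caution about the hypothesis mismatch is well placed and is the one point where the paper's ``immediately'' is too quick: for part (a) parity actually closes the gap (both $d$ and $\frac{p-2}{d}$ are odd, so $\frac{p-2}{d}\geq d+2$ forces $p\geq d^2+2d+2>(d+1)^2$), but for part (b) there are genuine cases such as $p=13$, $d=3$ (where $m=5$ but $(d+1)^2=16>p$) in which the proposition's hypothesis $d+1<\sqrt p$ fails while the corollary's $d<\sqrt{p+2}$ holds, so your direct re-run of the argument with the single inequality $m>d$ is exactly what is needed.
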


We introduce a process reducing the number of generators of $R(p)$ efficiently for small primes $p$ using these lemmas and propositions.
Referring to Lemma~\ref{lem_gen}, Lemma~\ref{lem_genred1} reduces a generating set of $R(p)$ so that $R(p)=\langle T_p,\omega_1,s_p(n) : |n| \leq \frac{p-1}{2}, n\neq 0 \rangle$. Set $S=\{1,2,\ldots,\frac{p-1}{2}\}$.

\

\begin{enumerate}[\bf Step 1.]
\item Remove all divisors $d$ of $p-1$ such that $d > \sqrt{p-1}$ from $S$.
\item  Remove all divisors $d$ of $p+1$ such that $d > \sqrt{p+1}$ from $S$.
\item  Remove $\frac{p-1}{d} \pm 1$ for every divisor $d$ of $p-1$ with $d < \sqrt{p-1}$ from $S$.
\item Remove $\frac{p+1}{d} \pm 1$ for every divisor $d$ of $p+1$ with $d < \sqrt{p+1}$ from $S$.
\item  For an integer $m=\frac{p-1}{d} \pm 1$ (or $\frac{p+1}{d}\pm 1$) which is removed in \textbf{Step 3} and \textbf{Step 4}, find an integer $m' \in \{1,2,\ldots,\frac{p-1}{2}\}$ such that $mm' \equiv \pm 1 \pmod{p}.$ If $m' > d+1,$ then remove $m'$ from~$S$. 
\item Remove all divisors $d$ of $p-2$ such that $d \neq p-2$ and $d > \sqrt{p-2}$ from $S$.
\item Remove all divisors $d$ of $p+2$ such that $d \neq p+2$ and $d > \sqrt{p+2}$ from $S$.
\end{enumerate}

\





    




Note that \textbf{Step 5} is supported by Lemma \ref{lem_genred1}. After terminating the process, we find a set $S$ for which
\begin{align*}
    R(p) = \langle T_p, \omega_1, s_p(n), s_p(-n) : n \in S\rangle.
\end{align*}

Since $R(p)$ is {\it transpose-invariant}, i.e., for any element $A \in R(p)$, its transpose matrix~$A^t$ belongs to $R(p)$ and since $T_p^t, \omega_1^t \in \langle T_p,\omega_1\rangle$, we have
\begin{align*}
     R(p) = \langle T_p, \omega_1, s_p(n)^t, s_p(-n)^t : n \in S\rangle.
\end{align*}

Applying the above process, we reduce the set of generators for $R(p)$ for each $p\in \{5,7,11,13,17\}$ as follows:
\begin{align}\label{eqn_reduced-generating-set-of-R(p)}
\begin{cases}
    R(5)&=\langle T_5, \omega_1, s_5(2,2)^t \rangle, \\
    R(7)&=\langle T_7, \omega_1, s_7(2,3)^t \rangle, \\
    R(11)&=\langle T_{11}, \omega_1, s_{11}(2,5)^t, s_{11}(3,-4)^t \rangle, \\
    R(13)&=\langle T_{11}, \omega_1, s_{13}(2,6)^t, s_{13}(3,4)^t \rangle, \\
    R(17)&=\langle T_{17}, \omega_1, s_{17}(2,8)^t, s_{17}(3,-6)^t, s_{17}(-3,6)^t \rangle.
\end{cases}
\end{align}
We remark that $s_p(-2)^t$ is not exhibited in \eqref{eqn_reduced-generating-set-of-R(p)}, because $2$ and $-2$ are always dependent for any $p \geq 5$ by Lemma \ref{lem_genred1}(e).

\begin{proposition}
For each $p\in \{2,3,5,7,11,13, 17\}$, $X(R(p))$ has genus zero.
\end{proposition}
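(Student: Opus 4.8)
The plan is to reduce everything to the area inequality of Corollary~\ref{cor_genusthm}: for each prime $p$ in the list it suffices to verify $v_p < 2\pi h_p$, since the corollary then forces $g_p = 0$. The two primes $p = 2,3$ require no computation. By Lemma~\ref{lem_gen} the groups $R(2)$ and $R(3)$ are the Hecke triangle groups $\Gamma_{(2,4,\infty)}$ and $\Gamma_{(2,6,\infty)}$, and the quotient of a triangle group is $\mathbb{P}^1$, of genus zero; since genus is a conjugation invariant this settles those cases. So the substantive work is for the five primes $p \in \{5,7,11,13,17\}$.

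For these I would work in the unit disc $\K$ through the isometry $\rho(z) = (z-2i)/(z+2i)$ fixed above, so that $v_p = \mathrm{area}\bigl(D(R(p))\bigr)$ with $D(R(p))$ the Dirichlet domain in $\K$. The key input is the inclusion $D(R(p)) \subseteq D(H)$, valid for any subset $H \subseteq R(p)^{\rho}$, together with the description $D(H) = \bigcap_{\gamma \in H}\bigl(\K \cap \mathrm{Ext}(I(\gamma))\bigr)$. Taking $H$ to be the finite set built from the reduced generators recorded in \eqref{eqn_reduced-generating-set-of-R(p)}, their inverses, and the transposes already appearing there, the bounding isometric circles are furnished explicitly by \eqref{eqn_isometric-circles1}, \eqref{eqn_isometric-circles2} and \eqref{eqn_isometric-circles3}. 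Because each $I(\gamma)$ is the locus equidistant from $0$ and $\gamma(0)$, it is a hyperbolic geodesic, so $D(H)$ is a geodesic polygon and $v_p \le \mathrm{area}(D(H))$.

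The next step is to pin down $D(H)$ for each $p$: list the finitely many circles $I(\gamma)$, discard those whose exteriors already contain the intersection of the rest, and read off the vertices as the intersection points of the surviving arcs, one of them being the ideal vertex on $\partial\K$ coming from the cusp $\infty$. The hyperbolic area is then $(n-2)\pi - \sum_j \theta_j$ by Gauss--Bonnet, where $n$ is the number of sides, the $\theta_j$ are the interior angles, and the ideal vertex contributes $0$. Comparing this value with $2\pi h_p$, where $h_p$ is evaluated from Corollary~\ref{cor_genusthm} using $h(-4p)$ (and $h(-p)$ when $p \equiv 3 \pmod 4$), yields the inequality $v_p < 2\pi h_p$; in fact the area should come out exactly $2\pi(h_p-2)$, comfortably below the threshold, which also reconfirms $g_p = 0$ via the area formula.

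The principal difficulty is purely geometric bookkeeping, and it grows with $p$: one must correctly decide which isometric circles actually bound $D(H)$ and locate every vertex and interior angle, keeping in mind that the order-$2$ and order-$3$ elliptic fixed points computed earlier may appear either as genuine corners or as fold-points in the interiors of sides (exactly as happens for $\mathrm{SL}_2(\Z)$ at $i$). A secondary caveat is that $D(H)$ only \emph{contains} the true Dirichlet domain, so $\mathrm{area}(D(H))$ is a priori only an upper bound for $v_p$; this is harmless for the genus-zero conclusion, which needs nothing beyond the strict inequality, but if one wants the exact value of $v_p$ one should additionally check, via a side-pairing of $D(H)$ under the generators (Poincar\'e's polygon theorem), that the chosen generators already cut out the genuine fundamental domain. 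For the five primes at hand each of these verifications is finite and fully explicit.
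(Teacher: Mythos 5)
Your outline is correct and follows the same skeleton as the paper: reduce to the strict inequality $v_p<2\pi h_p$ of Corollary~\ref{cor_genusthm}, dispose of $p=2,3$ via the triangle-group identification, and for $p\in\{5,7,11,13,17\}$ bound $v_p$ by the area of the Dirichlet domain $D(H_p)\supseteq D(R(p))$ cut out in $\K$ by the isometric circles \eqref{eqn_isometric-circles1}--\eqref{eqn_isometric-circles3} of the reduced generating sets \eqref{eqn_reduced-generating-set-of-R(p)}. The one substantive difference is your final step: you propose to compute the area of $D(H_p)$ exactly by Gauss--Bonnet, which requires locating every vertex and every interior angle --- precisely the ``geometric bookkeeping'' you flag as the principal difficulty. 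The paper avoids this entirely with a cheaper observation: one only needs to verify that $D(H_p)$ is a hyperbolic $m_p$-gon ($m_p=5,5,7,7,9$ respectively) and then use the crude bound that any hyperbolic $m$-gon has area strictly less than $(m-2)\pi$ (strict because some vertex has nonzero angle), giving $\frac{1}{2\pi}v_p<\frac{m_p}{2}-1\le h_p$ with no angle computations at all. Note that for $p=17$ the last comparison is an equality, $\frac{m_{17}}{2}-1=h_{17}=\frac{7}{2}$, so in the paper's chain the strictness supplied by the nonzero-angle remark is essential; your exact-area route sidesteps that subtlety (since $2\pi(h_p-2)<2\pi h_p$ trivially) but at the cost of the full vertex/angle determination, including your correct but unnecessary caveats about side-pairings and Poincar\'e's theorem, which are needed only for the exact value of $v_p$ --- and even that the paper recovers for free from the corollary's conclusion $v_p=2\pi(h_p-2)$ rather than from the polygon geometry. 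Both routes are valid; the paper's buys a much lighter verification, yours buys the area directly.
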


\begin{proof}
We already know the cases when $p=2,3$. Suppose $p \geq 5$ and let $H_p$ be the generating set of $R(p)$ given in \eqref{eqn_reduced-generating-set-of-R(p)}. Explicit formulae for isometric circles \eqref{eqn_isometric-circles1}, \eqref{eqn_isometric-circles2} and \eqref{eqn_isometric-circles3} provide the Dirichlet domains $D(H_p)$. It is immediately verified that these Dirichlet domains are hyperbolic $m_p$-gons, where $m_p=5$ for $p=5,7$, $m_p=7$ for $p=11,13$, and $m_p=9$ for $p=17.$

On the other hand, by Corollary \ref{cor_genusthm}, one can find that $h_p$ is given by
\begin{center}
\begin{tabular}{|c||c|c|c|c|c|c|}
  \hline
  $p$ & $5$ & $7$ & $11$ & $13$ &$17$ \\\hline
  $h_p$ & $5/2$ & $8/3$ & $3$ & $19/6$& $7/2$\\
  \hline
\end{tabular}\end{center}
Note that the maximum area of hyperbolic $m$-gons in $\K$ with respect to the hyperbolic measure is $(m-2)\pi.$ Moreover, if such hyperbolic $m$-gon is the Dirichlet domain $D(H_p)$, then it's area doesn't admit $(m-2)\pi$ since there exists a vertex with non-zero angle. Therefore for given $p$,
\begin{align*}
    \frac{1}{2\pi}v_p=D(R(p)^{\rho}) \leq D(H_p^{\rho}) < \frac{1}{2}m_p-1 \leq h_p.
\end{align*}
Again by Corollary \ref{cor_genusthm}, $g_p=0$ and $v_p=2\pi(h_p-2)$ for $p \in \{5,7,11,13,17\}.$
\end{proof}
We remark that the argument of the proof is not applicable to $p = 19,$ since $\frac{1}{2}m_{19}-1=\frac{7}{2} > 3=h_{19}.$



\

Thompson \cite{MR0604632} proved that there are only finitely many Fuchsian groups commensurable with $\SL_2(\Z)$ of fixed genus,  up to conjugation.
For the Atkin-Lehner groups $\Gamma_0^+(p)$, it turns out that $\Gamma_0^+(p)$ has genus zero for only finitely many $p$, more precisely, for primes dividing the order of the {\it Monster group} (see \cite{MR0417184}).

It is natural to ask a similar question for the groups $R(p)$.

\begin{question}
For which prime $p$, does the modular curve $X(R(p))$ have genus $g_p=0$?
\end{question}

\section{Non-holomorphic Eisenstein series}\label{sec_Eisen}
\subsection{For the full modular group $\SL_2(\Z)$}
We start this section with the proposition which states the basic properties of non-holomorphic Eisenstein series for  $\SL_2(\Z).$
\begin{proposition}\label{prop_g}
For the non-holomorphic Eisenstein series $g(z,\overline{z},\alpha,\beta)$ as a function in $q=\alpha+\beta\in \C$ and $k=\alpha-\beta \in 2\Z$, the following holds true:
\begin{enumerate}[\normalfont(a)]
    \item The function $g(z,\z,\alpha,\beta)$ has analytic continuation in the variable $q$ to the whole plane.
    \item The function $g(z,\overline{z},\alpha,\beta)$ is a harmonic function with respect to the elliptic operator,
    $$
    \Omega_{\alpha \beta}:=-y^2\left(\frac{\partial^2}{\partial x^2}+\frac{\partial^2}{\partial y^2}\right)+i(\alpha-\beta)y\frac{\partial}{\partial x}-(\alpha+\beta)y\frac{\partial}{\partial y},
    $$
    i.e., $\Omega_{\alpha \beta}(g)=0.$
    \item For each $\gamma=\begin{pmatrix}a & b \\ c & d \end{pmatrix}  \in \SL_2(\Z)$,
    $$
    g(z,\overline{z},\alpha,\beta)|_{\alpha,\beta}\gamma := (cz+d)^{-\alpha}(c\z+d)^{-\beta}g(\gamma z,\gamma \z,\alpha,\beta)=g(z,\z,\alpha,\beta).
    $$
    \item The Fourier expansion of $g(z,\z,\alpha,\beta)$ is given by
    \begin{align*}
    g(z,\z,\alpha,\beta)=&\phi_{k/2}(y,q) \\
    &+2(-1)^{k/2}(\sqrt 2 \pi)^q \sum_{n \in \Z \setminus \{0\}} \frac{\sigma_{q-1}(n)}{\Gamma\left(\frac{q}{2}+\sgn(n)\frac{k}{2}\right)}W(2\pi n y ;\alpha,\beta)e^{2\pi i nx},
    \end{align*}
    where 
    \begin{align*}
        &\phi_{k/2}(y,q):=2\zeta(q)+(-1)^{k/2}(4\pi) 2^{1-q}\frac{\Gamma(q-1)}{\Gamma\left(\frac{q+k}{2}\right) \Gamma\left(\frac{q-k}{2}\right)}\zeta(q-1)y^{1-q}, \\
        &\sigma_{q-1}(n):=\sum_{d|n}d^{q-1}, \text{ and }\\
    &W(y;\alpha,\beta):=|y|^{-q/2}W_{\sgn(y)\frac{k}{2},\frac{1}{2}(q-1)}(2|y|)
     \text{ is the modified $W$-Whittaker function.}
     \end{align*}
    \item The completed non-holomorphic Eisenstein series $$g^*(z,\z,\alpha,\beta):=\frac{q}{2}\left(1-\frac{q}{2}\right)\pi^{-1/2}\Gamma\left(\frac{q}{2}+\frac{|k|}{2}\right)g(z,\z,\alpha,\beta)$$ satisfies the functional equations,
    \begin{align*}
        &g^*(z,\z,\overline{\beta},\alpha)=g^*(-\z,-z,\alpha,\beta), \text{ and }\\
        &g^*(z,\z,1-\alpha,1-\beta)=y^{q-1}g^*(z,\z,\beta,\alpha).
    \end{align*}
\end{enumerate}
\end{proposition}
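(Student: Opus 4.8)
The plan is to establish the three ``pointwise'' assertions (b), (c) and the Fourier expansion (d) directly on the region of absolute convergence $\Re(q)>2$, and then to bootstrap the analytic continuation (a) and the functional equations (e) out of (d).

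For (c) I would argue by relabelling the lattice. Writing a single summand as $(mz+n)^{-\alpha}(m\z+n)^{-\beta}$ and using $m(\gamma z)+n=\frac{(ma+nc)z+(mb+nd)}{cz+d}$ for $\gamma=\begin{pmatrix} a & b \\ c & d\end{pmatrix}$, each factor produces a power of $(cz+d)$ or $(c\z+d)$ that is exactly cancelled by the slash weight; since $(m,n)\mapsto(ma+nc,mb+nd)$ is a bijection of $\Z^2\setminus\{(0,0)\}$ when $\gamma\in\SL_2(\Z)$, the sum is unchanged, which is the claimed invariance, and absolute convergence justifies the rearrangement. For (b) I would first rewrite the operator in Wirtinger derivatives as $\Omega_{\alpha\beta}=-4y^2\partial_z\partial_{\z}+2iy(-\beta\partial_z+\alpha\partial_{\z})$ and then check that each summand $u^{-\alpha}v^{-\beta}$ with $u=mz+n$, $v=m\z+n$ is annihilated: a one-line computation using $v-u=-2imy$ collapses the second-order and first-order contributions against each other. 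Local uniform convergence legitimises termwise differentiation for $\Re(q)>2$, so $\Omega_{\alpha\beta}g=0$ there, and since $\Omega_{\alpha\beta}g$ is analytic in $q$ the identity persists for all $q$ once (a) is available.

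Part (d) is the main obstacle and carries essentially all of the real work. I would split off the $m=0$ contribution $\sum_{n\neq 0}n^{-\alpha}n^{-\beta}$, which (using $k\in 2\Z$ to resolve the phase for $n<0$) produces the $2\zeta(q)$ term of $\phi_{k/2}$. For each fixed $m\neq 0$ I would apply Poisson summation in $n$ to $\sum_n(mz+n)^{-\alpha}(m\z+n)^{-\beta}$, reducing the problem to the Fourier--Whittaker integral $\int_{\R}(mz+t)^{-\alpha}(m\z+t)^{-\beta}e^{-2\pi i\xi t}\,dt$. Identifying this integral with the modified $W$-Whittaker function $W(\,\cdot\,;\alpha,\beta)$ (and isolating its $\xi=0$ contribution, which supplies the $\zeta(q-1)y^{1-q}$ part of $\phi_{k/2}$) fixes the shape of each Fourier mode, and summing over $m$ assembles the divisor sums $\sigma_{q-1}(n)$. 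The delicate points here, which I expect to be the most laborious, are justifying Poisson summation off the line of absolute convergence and pinning down the exact constants and $\Gamma$-factors in the Whittaker integral.

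Statement (a) is then immediate from (d): the right-hand side is built from $\zeta(q)$ and $\zeta(q-1)$, which continue to all of $\C$, from Whittaker functions entire in the relevant parameter, and from the rapidly decaying modes $e^{2\pi i nx}$, so $g$ continues (meromorphically, with poles only where the zeta factors force them) to the whole plane. For (e) I would read both functional equations off the Fourier expansion. The second, which is the reflection $q\leftrightarrow 2-q$ (note $1-\alpha+1-\beta=2-q$), follows mode by mode from the functional equation of the completed zeta together with the transformation law of the $W$-Whittaker function; the completion factor $\frac{q}{2}\!\left(1-\frac{q}{2}\right)\pi^{-1/2}\Gamma\!\left(\frac{q}{2}+\frac{|k|}{2}\right)$ is designed precisely to symmetrise these. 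The first equation $g^*(z,\z,\overline{\beta},\alpha)=g^*(-\z,-z,\alpha,\beta)$ I would instead obtain from a direct symmetry of the lattice sum, combining the reflection $z\mapsto-\z$ (which interchanges $\alpha$ and $\beta$ after the substitution $m\mapsto-m$) with the conjugation symmetry of the summands, and then checking that the two completion factors match. The recurring subtlety throughout (e) is the branch-tracking of the non-integral powers, which is kept under control by the hypothesis $k=\alpha-\beta\in 2\Z$.
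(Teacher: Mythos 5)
Your proposal is correct and follows essentially the same route as the paper's: the paper does not prove Proposition~\ref{prop_g} itself but defers to Maass's lectures \cite{MR0734485}, whose argument --- lattice/index-set relabelling for the modular invariance (exactly as the paper does for $g_{N,L}, g_{N,R}$ in Proposition~\ref{prop_gmodular}), termwise annihilation by $\Omega_{\alpha\beta}$, and the Fourier expansion obtained by periodization/Poisson summation with the Fourier integral $h(t;\alpha,\beta)$ identified as a modified Whittaker $W$-function (the same machinery the paper reuses in Proposition~\ref{prop_gNLR}), from which (a) and (e) are read off --- is precisely what you outline. One remark: your parenthetical ``meromorphically, with poles only where the zeta factors force them'' is in fact the sharper statement, since for $k=0$ the $\zeta(q-1)y^{1-q}$ term leaves a genuine simple pole at $q=2$ (while the poles at $q=1$ cancel between $2\zeta(q)$ and the $\Gamma(q-1)\zeta(q-1)$ term for every even $k$), so the proposition's ``analytic continuation to the whole plane'' must be read with that caveat; also, note that Poisson summation is applied inside the half-plane $\Re(q)>2$ of absolute convergence, so the delicate step is not justifying it off that line but the uniform-in-$n$ and locally-uniform-in-$q$ Whittaker estimates needed to continue the resulting series.
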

Proposition \ref{prop_g}(b) guarantees that $g(z,\z,\alpha,\beta)$ is smooth, which is deduced from the elliptic regularity, and Proposition \ref{prop_g}(c) says that $g(z,\z,\alpha,\beta)$ is automorphic of weight $(\alpha,\beta)$ for $\SL_2(\Z)$ as we would expect. In fact, the analytic continuation of $g$ follows from the Fourier expansion of which proof is given in \cite{MR0734485}.

Obviously one can obtain the similar properties for another definition of the Eisenstein series $G_k(z,s)$ from Proposition \ref{prop_g}. In particular, let
\begin{align*}
    &\widehat{G}_k(z,s):=\pi^{-\left(s+\frac{k}{2}\right)}\Gamma \left( s+\frac{k}{2}+\frac{|k|}{2}\right) G_k(z,s), \text{ and }\\
    &\widetilde{G}_k(z,s):=\left( s+\frac{k}{2}\right) \left(s+\frac{k}{2}-1\right) \widehat G_k(z,s)
\end{align*}
be the completed and the doubly-completed Eisenstein series of $G_k(z,s)$, respectively. The Fourier expansion and the functional equations of these completed series follow from those of $g(z,\z,\alpha,\beta)$.

\begin{proposition}\label{prop_G}
The Fourier expansion of $\widehat{G}_k(z,s)$ is given by
    \begin{align*}
        \widehat{G}&_k(z,s)=\frac{\Gamma\left(s+\frac{k}{2}+\frac{|k|}{2}\right)}{\Gamma\left(s+\frac{k}{2}\right)}\hat{\zeta}(2s+k)y^s+(-1)^{k/2}\frac{\Gamma\left(s+\frac{k}{2}\right)\Gamma\left(s+\frac{k}{2}+\frac{|k|}{2}\right)}{\Gamma(s+k)\Gamma(s)}\hat{\zeta}(2-2s-k)y^{1-s-k} \\
        &+(-1)^{k/2}\Gamma\left(s+\frac{k}{2}+\frac{|k|}{2}\right)y^{-k/2}\sum_{n \in \Z \setminus \{0\}}\frac{|n|^{-s-\frac{k}{2}}\sigma_{2s+k-1}(n)}{\Gamma\left(s+\frac{k}{2}+\sgn(n)\frac{k}{2}\right)}W_{\sgn(n)k/2,s+\frac{k-1}{2}}(4\pi |n|y)e^{2\pi i nx},
    \end{align*}
    where $\hat{\zeta}(s):=\pi^{-s/2}\Gamma\left(\frac{s}{2}\right)\zeta(s)$ is the completed Riemann zeta function.
\end{proposition}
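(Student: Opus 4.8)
The plan is to read the expansion off directly from the Fourier expansion of $g(z,\z,\alpha,\beta)$ in Proposition~\ref{prop_g}(d), using the two defining relations $G_k(z,s)=\frac{1}{2}y^s g(z,\z,s+k,s)$ and $\widehat{G}_k(z,s)=\pi^{-(s+k/2)}\Gamma(s+\frac{k}{2}+\frac{|k|}{2})G_k(z,s)$. Setting $\alpha=s+k$ and $\beta=s$, so that $q=\alpha+\beta=2s+k$ while the weight $\alpha-\beta=k$ is unchanged, every ingredient of Proposition~\ref{prop_g}(d) becomes an explicit function of $s$ and $k$. The whole argument is thus a substitution followed by simplification; the only external inputs are the Legendre duplication formula for $\Gamma$ and the functional equation $\hat{\zeta}(w)=\hat{\zeta}(1-w)$, together with the definition $\hat{\zeta}(w)=\pi^{-w/2}\Gamma(w/2)\zeta(w)$.

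For the nonzero Fourier modes I would first expand the modified Whittaker function as $W(2\pi n y;\alpha,\beta)=|2\pi n y|^{-q/2}W_{\sgn(n)\frac{k}{2},\frac{1}{2}(q-1)}(4\pi|n|y)$ at $q=2s+k$; the half-integer index becomes $\frac{1}{2}(q-1)=s+\frac{k-1}{2}$ and the subscript $\sgn(n)\frac{k}{2}$ is already in the desired form. Collecting the prefactors $\frac{1}{2}y^s\cdot 2(-1)^{k/2}(\sqrt2\,\pi)^q$ against the normalizing power $|2\pi n y|^{-q/2}$, the factors of $2$ cancel completely, the powers of $\pi$ collapse to $\pi^{s+k/2}$ which is then killed by the completion factor $\pi^{-(s+k/2)}$, and the surviving $y$-power is $y^{-k/2}$. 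This reproduces the stated $n$-sum term by term, and this portion is purely mechanical bookkeeping.

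The constant term $\frac{1}{2}y^s\phi_{k/2}(y,2s+k)$ requires the only genuine manipulation, and I expect it to be the main obstacle. Its first piece $\zeta(2s+k)y^s$, multiplied by $\pi^{-(s+k/2)}\Gamma(s+\frac{k}{2}+\frac{|k|}{2})$, becomes $\frac{\Gamma(s+\frac{k}{2}+\frac{|k|}{2})}{\Gamma(s+\frac{k}{2})}\hat{\zeta}(2s+k)y^s$ upon inserting $\hat{\zeta}(2s+k)=\pi^{-(s+k/2)}\Gamma(s+\frac{k}{2})\zeta(2s+k)$. The second piece, which carries $\frac{\Gamma(2s+k-1)}{\Gamma(s+k)\Gamma(s)}\zeta(2s+k-1)$ together with the awkward factor $2^{1-2s-k}$, is where care is needed: I would apply the duplication formula in the form $\Gamma(2s+k-1)=\pi^{-1/2}2^{2s+k-2}\Gamma(s+\frac{k-1}{2})\Gamma(s+\frac{k}{2})$ to absorb the power of $2$, and then rewrite $\zeta(2s+k-1)$ via $\hat{\zeta}(2s+k-1)=\hat{\zeta}(2-2s-k)$. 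The factor $\Gamma(s+\frac{k-1}{2})$ produced by duplication cancels against the one arising from $\hat{\zeta}(2s+k-1)^{-1}$, and a short check shows that all remaining powers of $\pi$ and $2$ cancel, leaving exactly $(-1)^{k/2}\frac{\Gamma(s+\frac{k}{2})\Gamma(s+\frac{k}{2}+\frac{|k|}{2})}{\Gamma(s+k)\Gamma(s)}\hat{\zeta}(2-2s-k)y^{1-s-k}$. Assembling the three pieces yields the claimed expansion.
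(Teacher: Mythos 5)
Your proposal is correct and is exactly the route the paper takes: Proposition~\ref{prop_G} is stated there as an immediate consequence of Proposition~\ref{prop_g}(d) via the substitution $\alpha=s+k$, $\beta=s$ (so $q=2s+k$) together with the definitions of $G_k(z,s)$ and $\widehat{G}_k(z,s)$, with the details left to the reader. Your bookkeeping supplies those details correctly — the cancellation of the powers of $2$ and $\pi$ against the completion factor $\pi^{-(s+k/2)}$ in the nonzero modes, and the use of the Legendre duplication formula $\Gamma(2s+k-1)=\pi^{-1/2}2^{2s+k-2}\Gamma\left(s+\frac{k-1}{2}\right)\Gamma\left(s+\frac{k}{2}\right)$ together with $\hat{\zeta}(2s+k-1)=\hat{\zeta}(2-2s-k)$ in the constant term all check out.
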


\begin{proposition}\label{prop_Gfunctional}
The completed and the doubly-completed Eisenstein series satisfy the functional equations, respectively,
    \begin{align*}
        \widehat{G}_k(z,s)=\widehat{G}_k(z,1-k-s) \quad 
        \text{ and } \quad
        \widetilde{G}_k(z,s)=\widetilde{G}_k(z,1-k-s).
    \end{align*}
\end{proposition}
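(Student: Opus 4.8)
The plan is to prove the functional equation directly from the explicit Fourier expansion of $\widehat{G}_k(z,s)$ recorded in Proposition~\ref{prop_G}, checking invariance under $s\mapsto 1-k-s$ term by term. The substitution $s\mapsto 1-k-s$ sends $u:=s+\frac{k}{2}$ to $1-u$, so it is a reflection about the line $s=\frac{1-k}{2}$; on the two pieces of the constant term it interchanges the powers $y^{s}$ and $y^{1-s-k}$, while leaving the common factor $y^{-k/2}$ in the nonconstant terms fixed. First I would isolate the constant term and show that the coefficient of $y^{s}$ in $\widehat{G}_k(z,s)$ equals the coefficient of $y^{s}$ in $\widehat{G}_k(z,1-k-s)$, the latter coming from the second summand, whose $y$-power becomes $y^{s}$. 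Since both coefficients carry the same factor $\hat{\zeta}(2s+k)$, this reduces to the purely Gamma-theoretic identity
\[
\frac{\Gamma\!\left(u+\tfrac{|k|}{2}\right)}{\Gamma(u)}
=(-1)^{k/2}\,\frac{\Gamma(1-u)\,\Gamma\!\left(1-u+\tfrac{|k|}{2}\right)}{\Gamma\!\left(1-u+\tfrac{k}{2}\right)\Gamma\!\left(1-u-\tfrac{k}{2}\right)},
\]
which I would verify with the reflection formula $\Gamma(x)\Gamma(1-x)=\pi/\sin(\pi x)$, using that $k$ is even so that $\sin\!\big(\pi(u+\tfrac{k}{2})\big)=(-1)^{k/2}\sin(\pi u)$. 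Splitting into $k\ge 0$ and $k<0$ to resolve $|k|$ is routine.

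For the nonzero Fourier modes I would compare the $n$-th coefficients of $\widehat{G}_k(z,s)$ and $\widehat{G}_k(z,1-k-s)$. Three ingredients make each mode invariant: the Whittaker symmetry $W_{\kappa,\mu}=W_{\kappa,-\mu}$, since $s\mapsto 1-k-s$ fixes the first index $\sgn(n)\tfrac{k}{2}$ and negates the second index $s+\tfrac{k-1}{2}$; the divisor-sum identity $\sigma_{w}(n)=|n|^{w}\sigma_{-w}(n)$, which shows that $|n|^{-s-k/2}\sigma_{2s+k-1}(n)$ is itself invariant under the reflection; and the Gamma-ratio identity
\[
\frac{\Gamma\!\left(s+\tfrac{k}{2}+\tfrac{|k|}{2}\right)}{\Gamma\!\left(s+\tfrac{k}{2}+\sgn(n)\tfrac{k}{2}\right)}
=\frac{\Gamma\!\left(1-s-\tfrac{k}{2}+\tfrac{|k|}{2}\right)}{\Gamma\!\left(1-s-\tfrac{k}{2}+\sgn(n)\tfrac{k}{2}\right)},
\]
which is trivial when $\sgn(n)\tfrac{k}{2}=\tfrac{|k|}{2}$ and otherwise reduces to $\Gamma(s+k)/\Gamma(s)=\Gamma(1-s)/\Gamma(1-s-k)$, an identity that holds because $k$ is an even integer (the product $\prod_{j=0}^{k-1}(s+j)$ is invariant up to the sign $(-1)^{k}=1$). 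Combining these three facts, each nonzero coefficient is unchanged, and hence $\widehat{G}_k(z,s)=\widehat{G}_k(z,1-k-s)$.

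The equation for the doubly-completed series is then immediate: the prefactor $\big(s+\tfrac{k}{2}\big)\big(s+\tfrac{k}{2}-1\big)$ is itself invariant under $s\mapsto 1-k-s$, because this substitution sends $s+\tfrac{k}{2}\mapsto 1-(s+\tfrac{k}{2})$ and $s+\tfrac{k}{2}-1\mapsto -(s+\tfrac{k}{2})$, whose product is again $\big(s+\tfrac{k}{2}\big)\big(s+\tfrac{k}{2}-1\big)$. Multiplying the invariance of $\widehat{G}_k$ by this invariant prefactor yields $\widetilde{G}_k(z,s)=\widetilde{G}_k(z,1-k-s)$.

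I expect the main obstacle to be the Gamma-function bookkeeping rather than any conceptual point: one must keep careful track of $|k|$ versus $k$ (equivalently of $\sgn(n)$) and repeatedly exploit the evenness of $k$, first through $\sin\!\big(\pi(u+\tfrac{k}{2})\big)=(-1)^{k/2}\sin(\pi u)$ in the constant term and then through $(-1)^{k}=1$ in the nonconstant terms. An alternative would be to deduce the statement from the functional equations of $g^{*}$ in Proposition~\ref{prop_g}(e) via the relation $\widetilde{G}_k(z,s)=-\tfrac{1}{2}\,y^{s}\,\pi^{1/2-s-k/2}\,g^{*}(z,\z,s+k,s)$; this works but requires feeding in the functional equation of $\hat{\zeta}$ to reconcile the differing $\pi$-normalizations of the two completions, so the term-by-term verification above is the cleaner route.
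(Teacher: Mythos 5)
Your verification is correct, but it takes a genuinely different route from the paper: the paper gives no term-by-term argument at all, stating only that the functional equations of $\widehat{G}_k$ and $\widetilde{G}_k$ ``follow from those of $g(z,\z,\alpha,\beta)$'', i.e.\ from Proposition~\ref{prop_g}(e), whose proof is delegated to Maass \cite{MR0734485} --- precisely the alternative you sketch and set aside. Your direct check from Proposition~\ref{prop_G} is sound in every part: the reflection $u=s+\tfrac{k}{2}\mapsto 1-u$, the constant-term Gamma identity (which does follow from $\Gamma(x)\Gamma(1-x)=\pi/\sin(\pi x)$ and $\sin\bigl(\pi(u+\tfrac{k}{2})\bigr)=(-1)^{k/2}\sin(\pi u)$ for even $k$), the three invariances $W_{\kappa,\mu}=W_{\kappa,-\mu}$, $|n|^{-u}\sigma_{2u-1}(n)=|n|^{u-1}\sigma_{1-2u}(n)$, and $\Gamma(s+k)/\Gamma(s)=\Gamma(1-s)/\Gamma(1-s-k)$ for even $k$, and finally the invariance of the prefactor $u(u-1)$. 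What your route buys is self-containedness and content the paper needs anyway: in the proof of Theorem~\ref{thm_funeq} the paper extracts the invariance of the non-constant coefficients by reading Proposition~\ref{prop_Gfunctional} backwards, whereas you establish exactly those invariances directly. Your caution about the $g^*$-route is also vindicated, though for a sharper reason than the one you give: your relation $\widetilde{G}_k(z,s)=-\tfrac{1}{2}y^{s}\pi^{1/2-s-k/2}g^{*}(z,\z,s+k,s)$ is right, but feeding it through Proposition~\ref{prop_g}(e) \emph{as printed} yields $\widetilde{G}_k(z,1-k-s)=\pi^{2s+k-1}\widetilde{G}_k(z,s)$, because the completion $g^*$ carries $\pi^{-1/2}$ where it should carry $\pi^{-q/2}$ (with the latter normalization the discrepancy disappears, as one checks at $k=0$); so the repair is to the $\pi$-power in the completion rather than an appeal to the functional equation of $\hat{\zeta}$. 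Your term-by-term argument sidesteps this entirely, which is a further point in its favor.
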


The goal of this section is to establish the analogous properties for the nonholomorphic Eisenstein series for $R(p)$. In particular, it gives the Fourier expansion of the nonholomorphic Eisenstein series for the Fricke group $\Gamma_0^+(2)$ and $\Gamma_0^+(3).$

\subsection{Automorphy condition}
First of all, we prove that the Eisenstein series $g_N(z,\z,\alpha,\beta)$ and $G_{N,k}(z,s)$ satisfy the automorphy condition for $R(N).$

\begin{lemma}\label{lem_corres}
Let $N$ be a positive integer, and $a_1,b_1,c_1,d_1,a_2,b_2,c_2,d_2 \in \Z$ be any integers satisfying $a_1d_1-Nb_1c_1=1$ and $Na_2d_2-b_2c_2=1$. The following maps are bijective:
\begin{align*}
        f_{LL}:(\Z \times \Z)_{N,L} &\longrightarrow (\Z \times \Z)_{N,L} \\
        (m,n) &\mapsto (ma_1+nc_1, Nmb_1+nd_1),
    \end{align*}
\begin{align*}
        f_{RR}:(\Z \times \Z)_{N,R} &\longrightarrow (\Z \times \Z)_{N,R} \\
        (m,n) &\mapsto (ma_1+Nnc_1, mb_1+nd_1),
    \end{align*}
\begin{align*}
        f_{LR}:(\Z \times \Z)_{N,L} &\longrightarrow (\Z \times \Z)_{N,R} \\
        (m,n) &\mapsto (Nma_2+nc_2, mb_2+nd_2),
    \end{align*}
\begin{align*}
        f_{RL}:(\Z \times \Z)_{N,R} &\longrightarrow (\Z \times \Z)_{N,L} \\
        (m,n) &\mapsto (ma_2+nc_2, mb_2+Nnd_2).
    \end{align*}
\end{lemma}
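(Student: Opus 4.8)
The plan is to recognize each of the four maps as right-multiplication of a row vector $(m,n)$ by an integer matrix of determinant $1$, and then to prove that this $\SL_2(\Z)$-action restricts to a bijection of the prescribed subset. Concretely, $f_{LL}$ is right-multiplication by $\begin{pmatrix} a_1 & Nb_1 \\ c_1 & d_1\end{pmatrix}$, whose determinant is $a_1d_1 - Nb_1c_1 = 1$; the matrices for $f_{RR}, f_{LR}, f_{RL}$ are obtained by placing the factor $N$ in the entry dictated by the definition of the map, and each again has determinant $1$ by the hypothesis $a_1d_1 - Nb_1c_1 = 1$ or $Na_2d_2 - b_2c_2 = 1$. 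Since such a matrix lies in $\SL_2(\Z)$, each map is automatically a bijection of the ambient set $\Z \times \Z$, and its inverse is again one of the four listed maps, attached to the inverse matrix (which has the same determinant-$1$ shape and comes from the inverse of the corresponding element of $R(N)$; note $f_{LR}^{-1}$ is of type $RL$ and vice versa). Thus it suffices to prove that each map sends its source set into its target set, and applying this to both the map and its inverse yields the bijection.

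First I would record the level structure. Writing $t := \gcd(Nm,n)$ for the $L$-sets and $t := \gcd(m,Nn)$ for the $R$-sets, membership means precisely that $t \mid m$ and $t \mid n$, so that $(m,n) = t(\mu,\nu)$ with the reduced pair satisfying $\gcd(N\mu,\nu) = 1$ (resp. $\gcd(\mu,N\nu) = 1$). Since the maps are linear, they commute with scaling by $t$, so the whole problem reduces to the primitive case $t = 1$: it is enough to show that the underlying $\SL_2(\Z)$-map sends a reduced pair to a pair $(\mu^*,\nu^*)$ satisfying the target coprimality, e.g. $\gcd(N\mu^*,\nu^*) = 1$ for $f_{LL}$ and $\gcd(\mu^*,N\nu^*) = 1$ for $f_{LR}$.

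The heart of the argument is this coprimality preservation, which I would verify prime-by-prime. Suppose a prime $p$ divides the relevant gcd of $(\mu^*,\nu^*)$. If $p \nmid N$, then $p$ divides both $\mu^*$ and $\nu^*$; since $(\mu,\nu) = (\mu^*,\nu^*)M^{-1}$ with $M^{-1} \in \SL_2(\Z)$ integral, $p$ divides both $\mu$ and $\nu$, contradicting the source coprimality. If $p \mid N$, then $p$ divides the $N$-weighted target coordinate automatically, and I would reduce the other target coordinate modulo $p$: the source coprimality forces $p$ to miss exactly one of $\mu,\nu$, and the resulting congruence then forces $p$ to divide one of $a_i,b_i,c_i,d_i$ in such a way that the determinant $a_1d_1 - Nb_1c_1$ (or $Na_2d_2 - b_2c_2$) becomes $\equiv 0 \pmod p$, contradicting its value $1$. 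Carrying out this casework for each of the four maps establishes well-definedness, and hence the lemma.

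I expect the main obstacle to be bookkeeping rather than conceptual content: the four maps are genuinely asymmetric in where the factor $N$ sits (left versus right coordinate, source versus target), and in the cross maps $f_{LR}, f_{RL}$ the relevant matrix comes from $R(N)^-$ with the normalization $Na_2d_2 - b_2c_2 = 1$ rather than $a_1d_1 - Nb_1c_1 = 1$. Keeping the determinant-$1$ contradiction aligned with the correct coordinate and the correct matrix entry across all eight prime-cases ($p \mid N$ or $p \nmid N$, times four maps) is the delicate part. The conceptual substance, however, is entirely captured by a single observation: right-multiplication by an $\SL_2(\Z)$-matrix preserves primitivity at primes not dividing $N$ and cannot create a new common factor at primes dividing $N$ without violating $\det = 1$.
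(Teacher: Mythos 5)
Your proposal is correct and follows essentially the same route as the paper's proof: reduce to the primitive stratum $t=1$ by linearity, use the determinant-one condition to show coprimality is preserved, and obtain bijectivity from the explicit inverse map, which is again of the matching type ($LL \leftrightarrow LL$, $LR \leftrightarrow RL$, etc.). Your prime-by-prime casework merely fleshes out the step the paper dispatches in one line (``it follows from the condition $a_1d_1 - Nb_1c_1 = 1$,'' i.e.\ taking the integer linear combinations $d_1\cdot(\text{first}) - Nc_1\cdot(\text{second})$ and $a_1\cdot(\text{second}) - b_1\cdot(\text{first})$ to recover $Nm$ and $n$), so the two arguments differ only in bookkeeping, not substance.
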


\begin{proof}
We only show the lemma for the first map $f_{LL}.$ For the rest of functions the proofs can be done similarly. First we show that the map is well-defined, which means $f_{LL}(m,n) \in (\Z \times \Z)_{N,L}.$ Since $(m,n) \in (\Z \times \Z)_{N,L},$ there is a positive integer $t$ such that $(m,n) \in t\Z \times t\Z$ and $\gcd(Nm,n)=t$. It is enough to show that $\gcd(N(ma_1+nc_1),Nmb_1+nc_1)=1$, so we may assume $t=1.$ Substituting $M=Nm,$ it is reduced to show $\gcd(Ma_1+Nnc_1,Mb_1+nc_1)=1$ assuming $\gcd(M,n)=1.$ It follows from the condition $a_1d_1-Nb_1c_1=1.$

The bijectivity of $f_{LL}$ follows from the existence of the inverse map $f_{LL}^{-1}:(\Z \times \Z)_{N,L} \rightarrow (\Z \times \Z)_{N,L}$ defined by $f_{LL}^{-1}(m,n) = (d_1m-c_1n,-N n b_1+n d_1)$. In fact, this map $f_{LL}^{-1}$ is also well-defined by the same reason.
\end{proof}

\begin{proposition}\label{prop_gmodular}
Let $N$ be a positive integer and $\alpha-\beta \in 2\Z.$ For any $\gamma \in R(N),$
\begin{enumerate}[\normalfont(a)]
    \item if $\gamma \in R(N)^+,$ then 
    \begin{equation*}
        \begin{cases}
        g_{N,L}(z,\z,\alpha,\beta)|_{\alpha,\beta} \gamma=g_{N,L}(z,\z,\alpha,\beta), \\
        g_{N,R}(z,\z,\alpha,\beta)|_{\alpha,\beta} \gamma=g_{N,R}(z,\z,\alpha,\beta). 
        \end{cases}
    \end{equation*}
    \item if $\gamma \in R(N)^-,$ then
    \begin{equation*}
        \begin{cases}
        g_{N,L}(z,\z,\alpha,\beta)|_{\alpha,\beta} \gamma=g_{N,R}(z,\z,\alpha,\beta), \\
        g_{N,R}(z,\z,\alpha,\beta)|_{\alpha,\beta} \gamma=g_{N,L}(z,\z,\alpha,\beta).
        \end{cases}
    \end{equation*}
\end{enumerate}
Consequently, $g_N(z,\z,\alpha,\beta)|_{\alpha,\beta} \gamma =g_N(z,\z,\alpha,\beta)$ for any $\gamma \in \Gamma.$
\end{proposition}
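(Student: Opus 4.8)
The plan is to reduce the automorphy of $g_N$ entirely to the four re-indexing bijections of Lemma~\ref{lem_corres}. The point is that the determinant relations appearing there, namely $a_1d_1-Nb_1c_1=1$ and $Na_2d_2-b_2c_2=1$, are exactly the determinant conditions defining $R(N)^+$ and $R(N)^-$. Thus the entire argument rests on one computation: tracking how the linear forms $\sqrt N mz+n$ (appearing in $g_{N,L}$) and $mz+\sqrt N n$ (appearing in $g_{N,R}$) transform under $z\mapsto\gamma z$, and recognizing the resulting index pair as the image under one of $f_{LL},f_{RR},f_{LR},f_{RL}$.

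First I would prove (a) for $\gamma=\begin{pmatrix} a & b\sqrt N \\ c\sqrt N & d\end{pmatrix}\in R(N)^+$. A direct calculation gives
\[
\sqrt N m(\gamma z)+n=\frac{\sqrt N(ma+nc)z+(Nmb+nd)}{c\sqrt N z+d},
\]
so that, writing $(m',n')=(ma+nc,\,Nmb+nd)=f_{LL}(m,n)$, each summand of $g_{N,L}(\gamma z,\gamma\z,\alpha,\beta)$ equals $(c\sqrt N z+d)^{\alpha}(c\sqrt N\z+d)^{\beta}$ times the $(m',n')$-summand of $g_{N,L}(z,\z,\alpha,\beta)$. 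Since $f_{LL}$ is a bijection of $(\Z\times\Z)_{N,L}$ by Lemma~\ref{lem_corres} and the series converges absolutely for $\Re(\alpha+\beta)>2$, I may re-index the sum; the factor $(c\sqrt N z+d)^{\alpha}(c\sqrt N\z+d)^{\beta}$ then cancels against the automorphy prefactor in the slash operator, yielding $g_{N,L}|_{\alpha,\beta}\gamma=g_{N,L}$. The identical computation applied to the form $mz+\sqrt N n$ produces the image under $f_{RR}$, giving $g_{N,R}|_{\alpha,\beta}\gamma=g_{N,R}$.

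For (b), with $\gamma=\begin{pmatrix} a\sqrt N & b \\ c & d\sqrt N\end{pmatrix}\in R(N)^-$, the essential new feature is that the transformation interchanges the two types of linear form. Indeed,
\[
\sqrt N m(\gamma z)+n=\frac{(Nma+nc)z+\sqrt N(mb+nd)}{cz+d\sqrt N},
\]
whose numerator is an $R$-type form with index $(Nma+nc,\,mb+nd)=f_{LR}(m,n)$. Because $f_{LR}\colon(\Z\times\Z)_{N,L}\to(\Z\times\Z)_{N,R}$ is a bijection, the same cancellation gives $g_{N,L}|_{\alpha,\beta}\gamma=g_{N,R}$; symmetrically the $R$-form is carried to an $L$-form via $f_{RL}$, so $g_{N,R}|_{\alpha,\beta}\gamma=g_{N,L}$. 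Summing the two identities and using the linearity of the slash operator together with $g_N=g_{N,L}+g_{N,R}$ gives $g_N|_{\alpha,\beta}\gamma=g_N$ in both the $R(N)^+$ and $R(N)^-$ cases, which is the asserted consequence.

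The one delicate point is to justify the cancellation of the automorphy factors, i.e.\ that the complex powers split multiplicatively and that $(c\sqrt N z+d)^{-\alpha}(c\sqrt N z+d)^{\alpha}=1$ for a fixed branch. This is where the hypothesis $\alpha-\beta=k\in 2\Z$ is used: for real $c\sqrt N,d$ one has $c\sqrt N\z+d=\overline{c\sqrt N z+d}$, so writing $c\sqrt N z+d=re^{i\theta}$ the combined factor is $r^{-(\alpha+\beta)}e^{-i(\alpha-\beta)\theta}$, which depends on $\theta$ only modulo $2\pi$ precisely because $\alpha-\beta$ is an integer. Hence the factor is single-valued and multiplicative in its argument, the splitting above is legitimate, and the cancellation is valid. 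I expect this careful bookkeeping of the factor of automorphy, rather than any conceptual difficulty, to be the main thing to verify.
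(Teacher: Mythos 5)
Your proposal is correct and follows essentially the same route as the paper: the paper's proof likewise transforms the linear forms under $\gamma$ and re-indexes the absolutely convergent sum via the four bijections $f_{LL},f_{RR},f_{LR},f_{RL}$ of Lemma~\ref{lem_corres}, exactly as you do. Your explicit verification that the complex-power automorphy factors are single-valued and cancel (using $\alpha-\beta\in 2\Z$) is a point the paper leaves implicit, but it does not change the argument.
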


\begin{proof}
Let $\gamma=\begin{pmatrix}
a & b\sqrt N \\ c\sqrt N & d
\end{pmatrix} \in R(N)^+.$ Then by Lemma \ref{lem_corres},
\begin{align*}
    g_{N,L}&(z,\z,\alpha,\beta)|_{\alpha,\beta} \gamma \\
    &=(c\sqrt N z+d)^{-\alpha}(c\sqrt N \z+d)^{-\beta}g_{N,L}(\gamma z,\gamma \z,\alpha,\beta) \\
    &=(c\sqrt N z+d)^{-\alpha}(c\sqrt N \z+d)^{-\beta}\sum_{(m,n)\in (\Z \times \Z)_{N,L}}\left(\sqrt N mz+n\right)^{-\alpha}\left(\sqrt N m\z+n\right)^{-\beta} \\
    &=\sum_{(m,n)\in (\Z \times \Z)_{N,L}}\left(\sqrt N (ma+nc)z+Nmb+nd\right)^{-\alpha}\left(\sqrt N (ma+nc)\z+Nmb+nd\right)^{-\beta} \\
    &=\sum_{f_{LL}^{-1}(m',n') \in (\Z \times \Z)_{N,L}}(m'z+n')^{-\alpha}(m'\z+n')^{-\beta} \\ 
    &=g_{N,L}(z,\z,\alpha,\beta).
\end{align*}
The others follow from Lemma~\ref{lem_corres} similarly.
\end{proof}

\

\subsection{Dual characters and the Dirichlet series}

We consider a special kind of a collection of functions $\chi : \Z \to \C$, whose images are contained in $U:=\{z \in \C : |z|=1\} \cup \{0\}.$

For a positive integer $n \in \Z_{>0}$, let $Q_n$ be a positive integer and $\chi_n : \Z \to U$ be a function such that
\begin{align*}
    \chi_n(a+Q_n)&=\chi_n(a) \quad \text{ for all $a \in \Z$}, \\
    \chi_n(a)&=0 \quad \text{ if $\gcd(n,Q_n) \neq 1$}.
\end{align*}

Let $\chi_{\bullet}$ be a sequence of pairs $(\chi_n,Q_n)$ for $n \in \Z_{>0}$. Here we call $\chi_{\bullet}$ a dual character if the following conditions hold:

\begin{enumerate}[\normalfont(i)]
    \item If $m$ and $n$ are relatively prime, then $Q_m$ and $Q_n$ are also relatively prime.
    \item If $m$ and $n$ are relatively prime, then $Q_{mn}=Q_m Q_n.$
    \item For any integers $a,b$ such that $\gcd(a,Q_m)=1$ and $\gcd(b,Q_n)=1$, $$\chi_m(a)\chi_n(b)=\chi_{mn}(aQ_n+bQ_m).$$
    \item $\chi_1=\idd_{1}$ and $Q_1=1.$
    \item There exists $\sigma \in \R$ such that $\sum_{n=1}^{\infty}\phi(Q_n)n^{-\sigma} < \infty,$ where $\phi$ is the Euler totient function.
\end{enumerate}
One of the trivial example of a dual character is $\idd_{\bullet}=(\idd_n,n)_{n \in \Z_{>0}}$, where 
\begin{align*}
    \idd_{n}(m):=\begin{cases} 1 \quad &\text{ if $m$ is relatively prime to $n$},\\
    0 \quad &\text{ otherwise},
    \end{cases} 
\end{align*}
\begin{lemma}\label{lem_charprime}
Any dual character $\chi_{\bullet}$ is uniquely determined by its subsequence $\{(\chi_{\ell^e},Q_{\ell^e}): \ell \text{ is a prime, } e \in~\Z_{\geq 0}\}$.
\end{lemma}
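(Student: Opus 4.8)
The plan is to prove the combinatorial fact behind the statement: for every positive integer $n$, both $Q_n$ and the function $\chi_n$ are computed from the prime-power pairs $(\chi_{\ell^e},Q_{\ell^e})$ through the multiplicativity axioms (i)--(iii), so that two dual characters sharing the same prime-power subsequence must coincide at every index. I would argue by induction on $\omega(n)$, the number of distinct prime divisors of $n$. For the modulus, write $n=\prod_i \ell_i^{e_i}$; the prime powers $\ell_i^{e_i}$ are pairwise coprime, so iterating axiom (ii) gives $Q_n=\prod_i Q_{\ell_i^{e_i}}$, while axiom (i) shows these factors are pairwise coprime. Thus $Q_n$ is already determined by the prime-power data.

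For the character itself, recall that $\chi_n$ has period $Q_n$ and, by the vanishing axiom, is supported on residues coprime to $Q_n$; hence $\chi_n$ is determined once its values $\chi_n(c)$ with $\gcd(c,Q_n)=1$ are known. Factor $n=mk$ with $m=\ell_1^{e_1}$ and $k=n/m$, so that $\gcd(m,k)=1$, $Q_n=Q_mQ_k$, and $\gcd(Q_m,Q_k)=1$ by (i) and (ii). Given $c$ coprime to $Q_n$, I would choose integers $a,b$ with $a\equiv cQ_k^{-1}\pmod{Q_m}$ and $b\equiv cQ_m^{-1}\pmod{Q_k}$, the inverses existing because $\gcd(Q_m,Q_k)=1$. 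Since $c$ is coprime to both $Q_m$ and $Q_k$, one checks $\gcd(a,Q_m)=\gcd(b,Q_k)=1$, and by the Chinese Remainder Theorem $aQ_k+bQ_m\equiv c\pmod{Q_n}$. Axiom (iii) then yields $\chi_n(c)=\chi_{mk}(aQ_k+bQ_m)=\chi_m(a)\chi_k(b)$, where $\chi_m$ is prime-power data and $\chi_k$ is determined by the inductive hypothesis; the right-hand side is independent of the choice of representatives $a,b$ precisely because $\chi_m$ and $\chi_k$ are periodic with periods $Q_m$ and $Q_k$. The base cases $n=1$ (axiom (iv)) and $n$ a prime power are immediate, completing the induction.

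The only step with genuine content is this reduction: expressing an arbitrary unit $c$ modulo $Q_n$ in the form $aQ_k+bQ_m$ with $a,b$ satisfying exactly the coprimality hypotheses needed to invoke axiom (iii). Everything else is bookkeeping with (i) and (ii). I expect the main care to go into verifying $\gcd(a,Q_m)=1$ and $\gcd(b,Q_k)=1$ from $\gcd(c,Q_n)=1$, since these are what license the multiplicativity relation. I note that the analytic axiom (v) plays no role in this uniqueness statement; it is needed only later to guarantee convergence of the associated Dirichlet series.
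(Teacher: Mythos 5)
Your proposal is correct and takes essentially the same route as the paper: both reduce $\chi_n$ to the prime-power data via the Chinese remainder theorem together with axioms (i)--(iii), the only cosmetic difference being that you peel off one prime power at a time with explicit inverses $a\equiv cQ_k^{-1}\pmod{Q_m}$, $b\equiv cQ_m^{-1}\pmod{Q_k}$, whereas the paper sets up the full isomorphism $(\Z/Q_{\ell_1^{e_1}})^{\times}\times\cdots\times(\Z/Q_{\ell_r^{e_r}})^{\times}\to(\Z/Q_n)^{\times}$ at once and writes $\chi_n(a)=\prod_i\chi_{\ell_i^{e_i}}(\pi_i(a))$ using the projections $\pi_i$ (your $a,b$ are exactly the components of $c$ under that isomorphism). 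Your verification of the coprimality hypotheses licensing axiom (iii), and your observation that axiom (v) is irrelevant to uniqueness, are both sound and consistent with the paper's argument.
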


\begin{proof}
Suppose that we have $Q_n$'s for $n \in \Z_{>0}$ that satisfy the conditions (i) and (ii) of the definition of a dual character. By the Chinese remainder theorem, for two integers $m,n$ that are relatively prime, a group homomorphism
\begin{align*}
    (\Z/Q_m)^{\times} \times (\Z/Q_n)^{\times} \to (\Z/Q_{mn})^{\times}
\end{align*}
which sends $(a,b)$ to $aQ_n+bQ_m$ is an isomorphism. Moreover, if $m,n$ and $r$ are pairwise relatively prime, then the following diagram commutes:
\begin{equation}\label{diag1}
\begin{tikzcd}
	{(\Z/Q_m)^{\times} \times (\Z/Q_n)^{\times} \times (\Z/Q_r)^{\times}} & {(\Z/Q_{mn})^{\times} \times (\Z/Q_r)^{\times}} & {} \\
	{(\Z/Q_m)^{\times} \times (\Z/Q_{nr})^{\times}} & {(\Z/Q_{mnr})^{\times}}
	\arrow[from=1-1, to=1-2]
	\arrow[from=1-2, to=2-2]
	\arrow[from=1-1, to=2-1]
	\arrow[from=2-1, to=2-2]
\end{tikzcd}
\end{equation}
The map $(\Z/Q_m)^{\times} \times (\Z/Q_n)^{\times} \times (\Z/Q_r)^{\times} \to (\Z/Q_{mnr})^{\times}$ sends $(a,b,c)$ to $aQ_{nr}+bQ_{mr}+~cQ_{mn}.$ Inductively we have an isomorphism $(\Z/Q_{m_1})^{\times} \times (\Z/Q_{m_2})^{\times} \times \cdots \times (\Z/Q_{m_j})^{\times} \to (\Z/Q_{m_1m_2 \cdots m_j})^{\times}$.

For given $\chi_{\ell^e}$, we construct $\chi_n : \Z \to \C$ for arbitrary positive integers $n\in \Z_{>0}$ as follows:
\begin{align}\label{eqn_chardecomp1}
    \chi_1(a)&:=\idd_1
\end{align}
and
\begin{align}\label{eqn_chardecomp2}
    \chi_n(a)&:=\prod_{i=1}^r \chi_{\ell_i^{e_i}}(\pi_i(a)) \quad \text{ for } n > 1,
\end{align}
where $n=\ell_1^{e_1} \cdots \ell_r^{e_r}$ is a prime factorization of $n$ and $\pi_i: (\Z/Q_n)^{\times} \to (\Z/Q_{\ell_i^{r_i}})^{\times}$ is the projection via~\eqref{diag1}. Also we define
\begin{align*}
Q_1:=1 \quad \text{ and } \quad
Q_n:=Q_{\ell_1^{e_1}}\cdots Q_{\ell_r^{e_r}} \text{ for $n>1$}.
\end{align*}
Then $\chi_{\bullet}:=((\chi_n,Q_n))_{n \in \Z_{>0}}$ is a dual character. Conversely, if one has a dual character $\chi_{\bullet}:=((\chi_n,Q_n))_{n \in \Z_{>0}}$ with given $(\chi_{\ell^e},Q_{\ell^e})$ for primes $\ell$ and non-negative integers $e \in \Z_{\geq 0}$, then it must satisfy the equations (\ref{eqn_chardecomp1}) and (\ref{eqn_chardecomp2}), which follow from the conditions (iii) and~(iv) of the definition of a dual character. Hence $\chi_{\bullet}$ is uniquely determined.
\end{proof}

For each function $\chi_n,$ the Gauss sum of $\chi_n$ is defined as
\begin{align*}
    G(b,\chi_n):=\sum_{a \in (\Z/Q_n \Z)^{\times}}\chi_n(a)(\zeta_{Q_n}^a)^b, \quad \text{ for } b \in \Z.
\end{align*}
Then we consider the associated Dirichlet series to a dual character $\chi_{\bullet}$,
\begin{align*}
    \cL(b,\chi_{\bullet},s)&:=\sum_{n=1}^{\infty}\frac{G(b,\chi_n)}{n^s}, \\
    \cL_N(b,\chi_{\bullet},s)&:=\prod_{p|N}\sum_{m=0}^{\infty}\frac{G(b,\chi_{})}{p^{ms}},\\
    \cL_p(b,\chi_{\bullet},s;v)&:=\sum_{m=0}^v \frac{G(b,\chi_{p^m})}{p^{ms}}.
\end{align*}

Note that the condition (e) of the definition of a dual character implies that there exists $\sigma \in \R$ such that the associated Dirichlet series $\cL(b,\chi_{\bullet},s)=\sum_{n=1}^{\infty}G(b,\chi_n)n^{-s}$ converges absolutely if $\Re(s) > \sigma$. The following proposition shows that  $\cL(b,\chi_{\bullet},s)$ can be expressed as the so-called Euler product.

\begin{proposition}
For any integer $b$ and a dual character $\chi_{\bullet}$, there exists $\sigma \in \R$ such that $$\cL(b,\chi_{\bullet},s)=\prod_{p}\cL_N(b,\chi_{\bullet},s).$$
for all $s \in \C$ with $\Re(s)>\sigma$. 
\end{proposition}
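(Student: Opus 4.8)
The plan is to deduce the Euler product from two ingredients: the absolute convergence already guaranteed by condition (v), and the multiplicativity of the arithmetic function $n \mapsto G(b,\chi_n)$. The convergence estimate is immediate: since each $\chi_n$ takes values in $U$, every summand of $G(b,\chi_n)=\sum_{a \in (\Z/Q_n\Z)^{\times}}\chi_n(a)(\zeta_{Q_n}^a)^b$ has modulus at most $1$, so $|G(b,\chi_n)| \leq \phi(Q_n)$. Condition (v) then yields a $\sigma \in \R$ with $\sum_{n} |G(b,\chi_n)|\,n^{-\sigma} \leq \sum_n \phi(Q_n)n^{-\sigma} < \infty$, and hence $\cL(b,\chi_{\bullet},s)$ converges absolutely on $\Re(s)>\sigma$, which also legitimizes all rearrangements below.

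The heart of the argument, and the main obstacle, is to prove that $G(b,\chi_{mn})=G(b,\chi_m)G(b,\chi_n)$ whenever $\gcd(m,n)=1$. For this I would use the isomorphism $(\Z/Q_m)^{\times}\times(\Z/Q_n)^{\times}\to(\Z/Q_{mn})^{\times}$, $(u,v)\mapsto uQ_n+vQ_m$, established in the proof of Lemma~\ref{lem_charprime}; this already incorporates conditions (i) and (ii), in particular the factorization $Q_{mn}=Q_mQ_n$. For $c=uQ_n+vQ_m$, property (iii) gives $\chi_{mn}(c)=\chi_m(u)\chi_n(v)$, while the root-of-unity identities $\zeta_{Q_mQ_n}^{Q_n}=\zeta_{Q_m}$ and $\zeta_{Q_mQ_n}^{Q_m}=\zeta_{Q_n}$ give $(\zeta_{Q_{mn}}^{c})^b=(\zeta_{Q_m}^{u})^b(\zeta_{Q_n}^{v})^b$. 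Summing over the bijection and factoring the resulting double sum produces $G(b,\chi_{mn})=\big(\sum_u \chi_m(u)(\zeta_{Q_m}^u)^b\big)\big(\sum_v \chi_n(v)(\zeta_{Q_n}^v)^b\big)=G(b,\chi_m)G(b,\chi_n)$. The delicate point here is aligning the additive-character bookkeeping with the multiplicative CRT decomposition; once both $\chi_{mn}(c)$ and $(\zeta_{Q_{mn}}^c)^b$ have been factored along the same splitting $c=uQ_n+vQ_m$, the conclusion is forced.

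Finally I would invoke the standard passage from a multiplicative function to its Euler product. In the region of absolute convergence, a multiplicative $f$ satisfies $\sum_{n\geq 1} f(n)n^{-s}=\prod_p \sum_{m\geq 0} f(p^m)p^{-ms}$, the product converging absolutely. Applying this to $f(n)=G(b,\chi_n)$ gives $\cL(b,\chi_{\bullet},s)=\prod_p \sum_{m=0}^{\infty} G(b,\chi_{p^m})\,p^{-ms}$ for $\Re(s)>\sigma$, which is exactly the asserted factorization into the local Euler factors $\cL_p(b,\chi_{\bullet},s;\infty)=\sum_{m=0}^{\infty} G(b,\chi_{p^m})p^{-ms}$. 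I expect no difficulty in this last step beyond citing the classical Dirichlet-series identity, whose hypotheses (multiplicativity plus absolute convergence) have been verified in the two preceding paragraphs.
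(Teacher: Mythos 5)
Your proposal is correct and takes essentially the same route as the paper: multiplicativity of $n \mapsto G(b,\chi_n)$ for coprime arguments via the CRT isomorphism of Lemma~\ref{lem_charprime} together with condition (iii) of the definition of a dual character, followed by the classical Euler-product theorem for multiplicative functions in the half-plane of absolute convergence (the paper cites Montgomery--Vaughan for this step). Your explicit bound $|G(b,\chi_n)| \leq \phi(Q_n)$ and the root-of-unity bookkeeping $\zeta_{Q_mQ_n}^{Q_n}=\zeta_{Q_m}$ are slightly more careful than the paper's displayed computation, but the argument is the same.
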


\begin{proof}
Let $m$ and $n$ be relatively prime. Then
\begin{align*}
    G(b,\chi_m,s)G(b,\chi_n,s)
    &=\sum_{a_1 \in (\Z/Q_m\Z)^{\times}}\chi_m(a_1)(\zeta_m^{a_1})^b\sum_{a_2 \in (\Z/Q_n\Z)^{\times}}\chi_m(a_2)(\zeta_m^{a_2})^b \\
    &=\sum_{a_1, a_2}\chi_m(a_1)\chi_n(a_2)(\zeta_n^{a_2})^b \\
    &=\sum_{a_1, a_2}\chi_{mn}(a_1Q_n+a_2Q_m)(\zeta_{mn}^{a_1Q_n+a_2Q_m})^b \\
    &=\sum_{a \in (\Z/Q_mQ_n\Z)^{\times}}\chi_{mn}(a)(\zeta_m^{a})^b.
\end{align*}
The last two equations follows from the definition of dual characters and the isomorphism given in Lemma \ref{lem_charprime}. Hence the coefficients of $\cL(b,\chi_{\bullet},s)$ are provided by a multiplicative function in $n$, so it equals to the Euler product by \cite[Theorem 1.9]{MR2378655}.
\end{proof}

One of the natural question is when the function $\cL(b,\chi_{\bullet},s)$ has analytic continuation. We suggest certain strong condition for $\chi_{\bullet}$ that admits the analytic continuation of $\cL(b,\chi_{\bullet},s)$. However, it does not characterize equivalent conditions for $\cL(b,\chi_{\bullet},s)$ to have analytic continuation precisely. 

\begin{definition}
Let $\chi_{\bullet}$ be a dual character. We call that $\chi_{\bullet}$ is finitely generated, if there exists finitely many primes $p_1, p_2, \ldots, p_r$ for which 
\begin{align*}
    (\chi_{p_i^{e_i}},Q_{p_i^{e_i}})=(\chi_{p_i},Q_{p_i}), \quad \text{ where  } e_i \in \Z_{> 0}  \text{ for } 1\leq i \leq r, 
\end{align*}
and
\begin{align*}
    (\chi_q,Q_q)=(\idd_1,1) \quad \text{ for any prime $q \neq p_i$},\text{ } 1\leq i \leq r.
\end{align*}
The functions $\chi_{p_1^{e_1}}, \ldots, \chi_{p_r^{e_r}}$ are called the generators of $\chi_{\bullet}$.
\end{definition}

\begin{lemma}\label{lem_fgdualcha}
Let $\chi_{\bullet}$ be the finitely generated dual character with generators $\chi_{p_1}, \chi_{p_2}, \ldots, \chi_{p_r}.$ Then $$\chi_{mp_1p_2\cdots p_{r'}}=\chi_{p_1p_2\cdots p_{r'}}$$ for any positive integers $r' \leq r$ and $m \in \Z_{>0}.$ In particular, if $n$ is an integer such that $\gcd(n,p_i)\neq 1$ for $1\leq i \leq r'$ and $\gcd(n,p_j)=1$ for $r' < j \leq r$, then
$$
\chi_n=\chi_{p_1p_2\cdots p_{r'}}.
$$
\end{lemma}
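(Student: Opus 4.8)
The plan is to prove the statement directly from the multiplicativity built into the definition of a finitely generated dual character, reducing everything to the behavior of the generators. The key observation is that for a finitely generated $\chi_{\bullet}$ with generators $\chi_{p_1},\dots,\chi_{p_r}$, we have by definition $(\chi_{p_i^{e}},Q_{p_i^{e}})=(\chi_{p_i},Q_{p_i})$ for every exponent $e\geq 1$, and $(\chi_q,Q_q)=(\idd_1,1)$ for all other primes $q$. So the first step is to record that whenever $\ell$ is a prime distinct from all the $p_i$ and $e\geq 1$, the local data at $\ell$ is trivial: $(\chi_{\ell^e},Q_{\ell^e})=(\idd_1,1)$. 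This follows because Lemma~\ref{lem_charprime} shows $\chi_{\bullet}$ is determined by its values on prime powers, and the finite-generation hypothesis forces $\chi_{\ell^e}$ to agree with $\chi_{\ell}=\idd_1$ for $\ell\neq p_i$.

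The main computation is then to expand $\chi_{mp_1\cdots p_{r'}}$ using the decomposition formula \eqref{eqn_chardecomp2}. First I would reduce to the case where $m$ is coprime to $p_1\cdots p_{r'}$: if $m$ shares a factor $p_i$ with $i\leq r'$, absorb it, since the local factor at $p_i$ is independent of the exponent by the finite-generation property. Concretely, writing the prime factorization of $mp_1\cdots p_{r'}$ and applying \eqref{eqn_chardecomp2}, the character $\chi_{mp_1\cdots p_{r'}}$ factors as a product of local factors $\chi_{\ell^{e}}$ over the primes $\ell$ dividing $mp_1\cdots p_{r'}$. For each prime $p_i$ with $i\leq r'$, the local factor is $\chi_{p_i}$ regardless of its exponent; for each prime $\ell$ dividing $m$ but distinct from $p_1,\dots,p_{r'}$, the local factor is trivial by the first step; and the primes $p_{r'+1},\dots,p_r$ simply do not appear. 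Hence the product collapses to exactly $\prod_{i=1}^{r'}\chi_{p_i}=\chi_{p_1\cdots p_{r'}}$, again by \eqref{eqn_chardecomp2} applied to $p_1\cdots p_{r'}$.

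For the ``In particular'' clause, I would argue as follows. Given $n$ with $\gcd(n,p_i)\neq 1$ for $i\leq r'$ and $\gcd(n,p_j)=1$ for $r'<j\leq r$, write $n=p_1^{a_1}\cdots p_{r'}^{a_{r'}}\cdot m$ where each $a_i\geq 1$ and $m$ is coprime to all of $p_1,\dots,p_r$. Then $n=m'\cdot(p_1\cdots p_{r'})$ with $m'=p_1^{a_1-1}\cdots p_{r'}^{a_{r'}-1}m$, so the first part of the lemma immediately gives $\chi_n=\chi_{m'p_1\cdots p_{r'}}=\chi_{p_1\cdots p_{r'}}$.

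The argument is essentially bookkeeping with the multiplicative decomposition, so I do not anticipate a genuine obstacle; the one point requiring care is making the reduction to the coprime case precise, i.e.\ verifying that the local factor at each $p_i$ truly does not depend on its exponent. This is exactly what the hypothesis $(\chi_{p_i^{e}},Q_{p_i^{e}})=(\chi_{p_i},Q_{p_i})$ supplies, so the step is clean once stated explicitly. I would also make sure to invoke condition~(iv) ($\chi_1=\idd_1$, $Q_1=1$) so that trivial local factors genuinely contribute the identity and can be dropped from the product without changing the value of $\chi_n$.
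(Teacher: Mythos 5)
Your proof is correct and takes essentially the same route as the paper's: both expand $\chi_{mp_1\cdots p_{r'}}$ through the prime-power decomposition of Lemma~\ref{lem_charprime} (equations \eqref{eqn_chardecomp1}--\eqref{eqn_chardecomp2}), use the finite-generation hypothesis to drop the exponents at the generator primes $p_1,\dots,p_{r'}$, and discard the remaining local factors as trivial, the paper merely packaging this as a split $m=m_1m_2$ with $\chi_{m_1}=\idd_1$ and $Q_{m_1}=1$. The two caveats in your write-up — that the primes $p_{r'+1},\dots,p_r$ must not divide $m$ for their factors ``not to appear,'' and that triviality of $\chi_{\ell^e}$ for non-generator $\ell$ and $e\geq 2$ is read into the definition — are exactly the same implicit assumptions made in the paper's own proof, and they hold in the only case the lemma is ever applied, namely the ``in particular'' clause.
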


\begin{proof}
First suppose a positive integer $m$ is of the form $m=p_1^{e_1}p_2^{e_2}\cdots p_{r'}^{e_{r'}}.$ Then by Lemma~\ref{lem_charprime},
\begin{align*}
    \chi_{mp_1p_2\cdots p_{r'}}(a)&=\chi_{p_1^{e_1+1}p_2^{e_2+1}\cdots p_{r'}^{e_{r'+1}}}(a)\\
    &=\chi_{p_1^{e_1+1}}(\pi_1(a))\chi_{p_2^{e_2+1}}(\pi_2(a))\cdots \chi_{p_{r'}^{e_{r'}+1}}(\pi_{r'}(a)) \\
    &=\chi_{p_1}(\pi_1(a))\chi_{p_2}(\pi_2(a))\cdots \chi_{p_{r'}}(\pi_{r'}(a)) \\
    &=\chi_{p_1p_2\cdots p_{r'}}(a)
\end{align*}
for any $a \in \Z.$ Now let $m=m_1m_2$ where $m_1$ is the largest divisor of $m$ relatively prime to $p_i$ for $1 \leq i \leq r'.$ By the same reason,
\begin{align*}
    \chi_{mp_1p_2\cdots p_{r'}}(a)&=
    \chi_{m_1}(a_1)\chi_{m_2p_1p_2\cdots p_{r'}}(a_2)\\
    &=\chi_{p_1p_2\cdots p_{r'}}(a_2)
\end{align*}
for integers $a_1,a_2$ such that $a \equiv a_1 Q_{mp_1p_2\cdots p_{r'}}+a_2Q_{m_1} \pmod{ Q_{mp_1p_2\cdots p_{r'}}}.$ Note that
\begin{align*}
    a_1 Q_{mp_1p_2\cdots p_{r'}}+a_2Q_{m_1} = a_1 Q_{p_1}Q_{p_2} \cdots Q_{p_{r'}} +a_2 \equiv a_2 \pmod{Q_{mp_1p_2\cdots p_{r'}}},
\end{align*}
so we get the desired equation.
\end{proof}

\begin{theorem}
Let $\chi_{\bullet}$ be the dual character generated by $\chi_{p_1}, \chi_{p_2}, \ldots, \chi_{p_r}.$ Then its associated Dirichlet series $\cL(b,\chi_{\bullet},s)$ is given by
\begin{align*}
    \cL(b,\chi_{\bullet},s)=\left(1-\frac{1}{p_1^s}\right)\left(1-\frac{1}{p_2^s}\right)\cdots \left(1-\frac{1}{p_r^s}\right)\zeta(s)\left(1+\sum_{\substack{1 \leq j \leq r \\ 1 \leq i_1 < i_2 < \cdots < i_j\leq r}}\frac{G(b,\chi_{p_{i_1}p_{i_2}\cdots p_{i_j}})}{(p_{i_1}^s-1)(p_{i_2}^s-1)\cdots(p_{i_j}^s-1)}\right).
\end{align*}
In particular, $\cL(b,\chi_{\bullet},s)$ is the product of meromorphic functions on $\C$, so it can be continued analytically to the complex plane $\C.$
\end{theorem}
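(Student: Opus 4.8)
The plan is to start from the Euler product for $\cL(b,\chi_{\bullet},s)$ established in the preceding proposition, which is valid on some right half-plane $\Re(s)>\sigma$, and to evaluate each local factor explicitly using the finitely generated hypothesis together with Lemma~\ref{lem_fgdualcha}. Writing the product as
\begin{align*}
    \cL(b,\chi_{\bullet},s)=\prod_{\ell}\left(\sum_{e=0}^{\infty}\frac{G(b,\chi_{\ell^e})}{\ell^{es}}\right),
\end{align*}
where $\ell$ ranges over all primes, I would split the primes into the generators $p_1,\ldots,p_r$ and the remaining primes $q$, and compute the two kinds of local factor separately.

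For a non-generator prime $q$, Lemma~\ref{lem_fgdualcha} (taking $r'=0$) gives $\chi_{q^e}=\chi_1=\idd_1$ for every $e\geq 0$, so $G(b,\chi_{q^e})=G(b,\idd_1)=1$ and the local factor collapses to $\sum_{e\geq 0}q^{-es}=(1-q^{-s})^{-1}$. For a generator prime $p_i$, the same lemma yields $\chi_{p_i^e}=\chi_{p_i}$ for $e\geq 1$ while $\chi_{p_i^0}=\idd_1$; hence
\begin{align*}
    \sum_{e=0}^{\infty}\frac{G(b,\chi_{p_i^e})}{p_i^{es}}=1+G(b,\chi_{p_i})\sum_{e=1}^{\infty}p_i^{-es}=1+\frac{G(b,\chi_{p_i})}{p_i^s-1}.
\end{align*}
Comparing the non-generator factors with $\zeta(s)=\prod_{\ell}(1-\ell^{-s})^{-1}$ shows that $\prod_{q}(1-q^{-s})^{-1}=\zeta(s)\prod_{i=1}^{r}(1-p_i^{-s})$, which produces exactly the prefactor $\left(1-\frac{1}{p_1^s}\right)\cdots\left(1-\frac{1}{p_r^s}\right)\zeta(s)$.

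It then remains to expand the finite product over generator primes,
\begin{align*}
    \prod_{i=1}^{r}\left(1+\frac{G(b,\chi_{p_i})}{p_i^s-1}\right)=1+\sum_{\substack{1\leq j\leq r \\ 1\leq i_1<\cdots<i_j\leq r}}\frac{G(b,\chi_{p_{i_1}})\cdots G(b,\chi_{p_{i_j}})}{(p_{i_1}^s-1)\cdots(p_{i_j}^s-1)},
\end{align*}
and, since distinct generator primes are pairwise coprime, to invoke the multiplicativity of $n\mapsto G(b,\chi_n)$ (the identity $G(b,\chi_m)G(b,\chi_n)=G(b,\chi_{mn})$ for coprime $m,n$, obtained from the isomorphism of Lemma~\ref{lem_charprime} and property (iii) of a dual character) to replace each product of Gauss sums by the single sum $G(b,\chi_{p_{i_1}\cdots p_{i_j}})$. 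This is precisely the bracketed sum in the statement.

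Finally, the identity so far holds only for $\Re(s)>\sigma$, but its right-hand side is a finite product and sum of the functions $1-p_i^{-s}$, $(p_i^s-1)^{-1}$ and $\zeta(s)$ — each meromorphic on $\C$, the factors $(p_i^s-1)^{-1}$ having poles only at $s\in\frac{2\pi i}{\log p_i}\Z$ and $\zeta(s)$ its single pole at $s=1$ — together with the constants $G(b,\chi_{p_{i_1}\cdots p_{i_j}})$; hence it furnishes the meromorphic continuation of $\cL(b,\chi_{\bullet},s)$ to the whole plane. I expect the only genuine care to lie in the second and third steps: confirming $G(b,\idd_1)=1$ and translating the stabilization $\chi_{p_i^e}=\chi_{p_i}$ into the geometric-series evaluation of the local factor, and then carrying out the expansion of the finite product and the recombination of Gauss sums via multiplicativity consistently. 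Once these bookkeeping points are settled, the analytic continuation is immediate.
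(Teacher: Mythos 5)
Your proof is correct, but it takes a genuinely different route from the paper's. You work directly on the Euler product $\cL(b,\chi_{\bullet},s)=\prod_{\ell}\sum_{e\geq 0}G(b,\chi_{\ell^e})\ell^{-es}$ from the preceding proposition and evaluate each local factor in closed form: the stabilization $(\chi_{p_i^e},Q_{p_i^e})=(\chi_{p_i},Q_{p_i})$ turns each generator factor into $1+G(b,\chi_{p_i})/(p_i^s-1)$, the non-generator factors reassemble into $\zeta(s)\prod_{i=1}^r(1-p_i^{-s})$, and the multiplicativity $G(b,\chi_m)G(b,\chi_n)=G(b,\chi_{mn})$ for coprime $m,n$ (already established in the proof of the Euler product) recombines the expanded finite product into the bracketed sum. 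The paper instead completes the series, setting $f(s)=(p_1\cdots p_r)^s\pi^{-s/2}\Gamma\left(\frac{s}{2}\right)\cL(b,\chi_{\bullet},s)$, represents $f(2s)$ by the integrals $\int_0^{\infty}e^{-\pi n^2 t}t^{s-1}\,dt$, partitions the $n$-sum according to which generators divide $n$ via Lemma~\ref{lem_fgdualcha}, and then carries out an inclusion--exclusion rearrangement together with the polynomial identity $\sum(-1)^{m-j'}(a_{i_1}\cdots a_{i_{j'}})^s=(a_1^s-1)\cdots(a_m^s-1)$ to arrive at the same formula through $\hat{\zeta}(2s)$. Your argument buys brevity and transparency: it needs no integral representation and no interchange-of-summation justification beyond the absolute convergence already guaranteed by condition (v) of the definition of a dual character, and it makes the pole structure (the pole of $\zeta$ at $s=1$ and the lines $p_i^s=1$) visible at once; the paper's heavier route keeps the computation in completed, theta-integral form. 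Two small points to tidy: Lemma~\ref{lem_fgdualcha} is stated only for positive $r'\leq r$, so your ``$r'=0$'' case ($\chi_{q^e}=\idd_1$ for a non-generator prime $q$) should instead be drawn directly from the definition of finite generation together with the decomposition \eqref{eqn_chardecomp2} of Lemma~\ref{lem_charprime}, which is also exactly what gives $G(b,\chi_{q^e})=G(b,\idd_1)=1$; and the continuation you obtain is meromorphic (a finite product of meromorphic functions, as in the statement), not entire.
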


\begin{proof}
For the sake of convenience, we abuse the notation as follows: $\frac{G(b,\chi_{p_{i_1}p_{i_2}\cdots p_{i_j}})}{(p_{i_1}^s-1)(p_{i_2}^s-1)\cdots(p_{i_j}^s-1)}:=1$ if $j=0.$
Consider a function
\begin{align*}
    f(s):=(p_1p_2 \ldots p_r)^s \pi^{-s/2}\Gamma\left(\frac{s}{2}\right)\cL(b,\chi_{\bullet},s).
\end{align*}
Then,
\begin{align*}
    f(2s)&=(p_1p_2 \ldots p_r)^{2s} \pi^{-s}\sum_{n=1}^{\infty}G(b,\chi_n)n^{-2s}\int_0^{\infty}e^{-t}t^{s-1}dt \\
    &=(p_1p_2 \ldots p_r)^{2s} \sum_{n=1}^{\infty}G(b,\chi_n)\int_0^{\infty}e^{-\pi n^2t}t^{s-1}dt \\
    &=(p_1p_2 \ldots p_r)^{2s} \sum_{\substack{0 \leq j \leq r, \\ 1\leq i_1 < i_2 < \cdots < i_j \leq r}}G(b,\chi_{p_{i_1}p_{i_2}\cdots p_{i_j}})\sum_{\substack{\gcd(n,p_t) \neq 1 \text{ for $t=i_1,i_2,\ldots,i_j$,} \\ \gcd(n,p_t)=1 \text{ for $t \neq i_1, i_2, \ldots, i_j$}}}\int_0^{\infty}e^{-\pi n^2 t}t^{s-1}dt.
\end{align*}
The last equation follows from Lemma \ref{lem_fgdualcha}.

Since the series $\sum\limits_{p_1p_2 \cdots p_k \mid n}\int_0^{\infty}e^{-\pi n^2 t}t^{s-1}dt$ converges absolutely, one can interchange the summation as

\begin{align*}
    \sum_{\substack{\gcd(n,p_t) \neq 1 \text{ for $t=i_1,i_2,\ldots,i_j$,} \\ \gcd(n,p_t)=1 \text{ for $t \neq i_1, i_2, \ldots, i_j$}}}\int_0^{\infty}e^{-\pi n^2 t}t^{s-1}dt = \sum_{\substack{j \leq k \leq r, \\ 1 \leq i_{j+1} < i_{j+2} < \cdots < i_k \leq r, \\ i_{j+1}, i_{j+2}, \ldots i_k \in \{1,2, \ldots, r\} \setminus \{i_1,i_2, \ldots, i_j\}}}\sum_{p_1p_2 \cdots p_k \mid n}\int_0^{\infty}e^{-\pi n^2 t}t^{s-1}dt.
\end{align*}
Thus,
\begin{align*}
    f(2s)&=(p_1p_2 \ldots p_r)^{2s} \times \\ &\sum_{\substack{0 \leq j \leq r, \\ 1\leq i_1 < i_2 < \cdots < i_j \leq r}}G(b,\chi_{p_{i_1}p_{i_2}\cdots p_{i_j}})\sum_{\substack{j \leq k \leq r, \\ 1 \leq i_{j+1} < i_{j+2} < \cdots < i_k \leq r, \\ i_{j+1}, i_{j+2}, \ldots i_k \in \{1,2, \ldots, r\} \setminus \{i_1,i_2, \ldots, i_j\}}}(-1)^{k-j}\sum_{p_1p_2 \cdots p_k \mid n}\int_0^{\infty}e^{-\pi n^2 t}t^{s-1}dt \\
    &=\sum_{\substack{0 \leq j \leq r, \\ 1\leq i_1 < i_2 < \cdots < i_j \leq r}}G(b,\chi_{p_{i_1}p_{i_2}\cdots p_{i_j}})\sum_{\substack{j \leq k \leq r ,\\ 1 \leq i_{j+1} < i_{j+2} < \cdots < i_k \leq r, \\ i_{j+1}, i_{j+2}, \ldots i_k \in \{1,2, \ldots, r\} \setminus \{i_1,i_2, \ldots, i_j\}}}(-1)^{k-j}\frac{(p_1p_2 \ldots p_r)^{2s}}{(p_{i_1}p_{i_2} \ldots p_{i_k})^{2s}} \\ & \qquad \times \sum_{p_1p_2 \cdots p_k \mid n}\int_0^{\infty}(p_{i_1}p_{i_2} \ldots p_{i_k})^{2s}e^{-\pi n^2 t}t^{s-1}dt.
\end{align*}
Note that the last summation in the above is
\begin{align*}
    \sum_{p_1p_2 \cdots p_k \mid n}\int_0^{\infty}(p_{i_1}p_{i_2} \ldots p_{i_k})^{2s}e^{-\pi n^2 t}t^{s-1}dt&=\sum_{n=1}^{\infty}\int_0^{\infty}e^{-\pi n^2 t}t^{s-1}dt=\pi^{-s}\Gamma(s)\zeta(2s)=\hat{\zeta}
    (2s).
\end{align*}
Also note that for arbitrary real numbers $a_1, a_2, \ldots, a_m \in \R$,
\begin{align*}
    \sum_{\substack{0 \leq j' \leq m, \\ 1\leq i_1 < i_2 < \cdots < i_{j'} \leq m}}(-1)^{m-j'}(a_{i_1}a_{i_2}\cdots a_{i_{j'}})^s=(a_1^s-1)(a_2^s-1)\cdots (a_m^s-1).
\end{align*}
By substituting the integers $p_i^2$'s except for $i=i_1, i_2, \ldots, i_j$ for $a_1,a_2, \ldots,a_m$, we have
\begin{align*}
    \sum_{\substack{j \leq k \leq r, \\ 1 \leq i_{j+1} < i_{j+2} < \cdots < i_k \leq r, \\ i_{j+1}, i_{j+2}, \ldots i_k \in \{1,2, \ldots, r\} \setminus \{i_1,i_2, \ldots, i_j\}}}(-1)^{k-j}\frac{(p_1p_2 \ldots p_r)^{2s}}{(p_{i_1}p_{i_2} \ldots p_{i_k})^{2s}}=\frac{(p_1^{2s}-1)(p_2^{2s}-1)\cdots (p_r^{2s}-1)}{(p_{i_1}^{2s}-1)(p_{i_2}^{2s}-1)\cdots (p_{i_j}^{2s}-1)}.
\end{align*}
Therefore, 
\begin{align}\label{eqn_f2s}
    f(2s)&=\hat{\zeta}(2s)\sum_{{\substack{0 \leq j \leq r, \\ 1\leq i_1 < i_2 < \cdots < i_j \leq r}}}G(b,\chi_{p_{i_1}p_{i_2}\cdots p_{i_j}})\frac{(p_1^{2s}-1)(p_2^{2s}-1)\cdots (p_r^{2s}-1)}{(p_{i_1}^{2s}-1)(p_{i_2}^{2s}-1)\cdots (p_{i_j}^{2s}-1)}.
\end{align}
The assertion follows by dividing both terms of equation (\ref{eqn_f2s}) by $(p_1p_2 \ldots p_r)^{2s} \pi^{-s}\Gamma(s).$
\end{proof}

Unfortunately, the principal character $\idd_{\bullet}$ is not finitely generated, while the Dirichlet series $\cL(b,\idd_{\bullet},s)$ associated to the principal character also has analytic continuation. 

\begin{lemma}\label{lem_trivlocalL}
The local Dirichlet series $\cL_p(b,\idd_{\bullet},s)$ satisfies the following equation:
\begin{align*}
    \cL_p(b,\idd_{\bullet},s)=
    \begin{cases}
    \zeta_p(s)^{-1} & \text{ if } p \nmid b, \\
    \zeta_p(s)^{-1}\zeta_p(s-1; v_p(b)) & \text{ otherwise},
    \end{cases}
\end{align*}
where $v_p$ denotes the $p$-valuation and $\zeta_p(s; v_p(b)):=\sum_{m=0}^{v_p(b)}\frac{1}{p^{ms}}.$
\end{lemma}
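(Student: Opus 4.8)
The plan is to evaluate the Gauss sum $G(b,\idd_{p^m})$ explicitly for each prime power and then sum the resulting series. Since $\idd_{p^m}$ is the principal character of modulus $p^m$, the sum
\[
G(b,\idd_{p^m})=\sum_{a\in(\Z/p^m\Z)^{\times}}\zeta_{p^m}^{ab}
\]
is precisely the classical Ramanujan sum $c_{p^m}(b)$. Thus the lemma reduces to computing $c_{p^m}(b)$ at prime powers and substituting into the local series $\cL_p(b,\idd_{\bullet},s)=\sum_{m=0}^{\infty}G(b,\idd_{p^m})p^{-ms}$; recall here that $\zeta_p(s)=\sum_{m\ge 0}p^{-ms}=(1-p^{-s})^{-1}$.

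First I would compute the Ramanujan sum at a prime power by writing the sum over units as the complete sum over $\Z/p^m\Z$ minus the sum over the multiples of $p$:
\[
c_{p^m}(b)=\sum_{a=0}^{p^m-1}\zeta_{p^m}^{ab}-\sum_{a'=0}^{p^{m-1}-1}\zeta_{p^{m-1}}^{a'b}.
\]
Each complete sum of roots of unity vanishes unless the modulus divides $b$, which gives, for $m\ge 1$, the clean formula $c_{p^m}(b)=p^m\,[\,p^m\mid b\,]-p^{m-1}\,[\,p^{m-1}\mid b\,]$, together with $c_1(b)=1$; here $[P]$ equals $1$ if $P$ holds and $0$ otherwise. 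Writing $v:=v_p(b)$, this yields $c_{p^m}(b)=\phi(p^m)$ for $1\le m\le v$, the boundary value $c_{p^{v+1}}(b)=-p^{v}$, and $c_{p^m}(b)=0$ for $m\ge v+2$; when $p\nmid b$ (that is, $v=0$) only $c_1(b)=1$ and $c_p(b)=-1$ survive.

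Finally I would substitute these values into the defining series. When $p\nmid b$ the series truncates to $\cL_p(b,\idd_{\bullet},s)=1-p^{-s}=\zeta_p(s)^{-1}$, which is the first case. When $p\mid b$ it becomes the finite sum $1+\sum_{m=1}^{v}\phi(p^m)p^{-ms}-p^{v}p^{-(v+1)s}$, and the remaining task is to identify this with $\zeta_p(s)^{-1}\zeta_p(s-1;v)$. This is the only step needing a genuine manipulation: expanding $(1-p^{-s})\sum_{n=0}^{v}p^{-n(s-1)}$ and reindexing the subtracted part by $m=n+1$ produces a telescoping in which the interior terms collapse to $\phi(p^m)p^{-ms}$, leaving only the constant $1$ and the boundary term $-p^{v}p^{-(v+1)s}$. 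I expect this bookkeeping at the top index $m=v+1$ to be the only place requiring care; every other step is a direct evaluation.
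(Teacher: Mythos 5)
Your proposal is correct and takes essentially the same approach as the paper: both proofs evaluate the Ramanujan sums $G(b,\idd_{p^m})$ at prime powers (equal to $\phi(p^m)=p^m-p^{m-1}$ for $1\le m\le v_p(b)$, to $-p^{v_p(b)}$ at $m=v_p(b)+1$, and to $0$ beyond) and then identify the resulting finite sum with $\zeta_p(s)^{-1}\zeta_p(s-1;v_p(b))$. The only difference is one of detail: you derive the Gauss-sum values by the standard inclusion--exclusion over complete root-of-unity sums and carry out the telescoping verification explicitly, whereas the paper simply states these values and says the result follows by combining them.
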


\begin{proof}
It follows from the definition directly. If $b$ is prime to $p$, the Gauss sum of $\idd_{}$ for each integer $m \geq 1 $ is
\begin{align*}
    G(b,\idd_{p^m})=
    \begin{cases}
    -1 & \text{ if } m=1, \\
    0 & \text{ otherwise}.
    \end{cases}
\end{align*}
If $b$ is divisible by $p$, then
\begin{align*}
    G(b,\idd_{p^m})=
    \begin{cases}
    p^m-p^{m-1} & \text{ if } 1 \leq m \leq v_p(b), \\
    -p^{v_p(b)} & \text { if } m=v_p(b)+1, \\
    0 & \text{ otherwise}.
    \end{cases}
\end{align*}
Combining them we get the result.
\end{proof}

\begin{proposition}\label{prop_Liddformula}
The Dirichlet series $\cL(b,\idd_{\bullet},s)$ is
\begin{align*}
    \cL(b,\idd_{\bullet},s)=\frac{\sigma_{s-1}(b)}{\zeta(s)|b|^{s-1}}.
\end{align*}
Consequently, $\cL(b,\idd_{\bullet},s)$ can be continued analytically to the complex plane $\C.$
\end{proposition}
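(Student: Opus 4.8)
The plan is to exploit the multiplicativity of the Ramanujan sum in $n$ and reduce everything to the local factors already determined in Lemma~\ref{lem_trivlocalL}. First I would observe that for a fixed integer $b$, the arithmetic function $n \mapsto G(b,\idd_n)$ is multiplicative, so by the Euler product established above the associated Dirichlet series factors as
\begin{align*}
    \cL(b,\idd_{\bullet},s)=\prod_p \cL_p(b,\idd_{\bullet},s)
\end{align*}
on the half-plane $\Re(s)>\sigma$ of absolute convergence guaranteed by condition (v) in the definition of a dual character. Since $G(b,\idd_n)=G(|b|,\idd_n)$ and $v_p(b)=v_p(|b|)$, there is no loss in assuming $b>0$ and replacing it by $|b|$ at the end.

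Next I would substitute the closed forms from Lemma~\ref{lem_trivlocalL}. For $p \nmid b$ the local factor is $\zeta_p(s)^{-1}=1-p^{-s}$, whereas for $p \mid b$ it equals $(1-p^{-s})\,\zeta_p(s-1;v_p(b))$. Pulling the factor $(1-p^{-s})$ out of every prime and recognizing $\prod_p (1-p^{-s})=\zeta(s)^{-1}$ leaves only a finite product over the primes dividing $b$:
\begin{align*}
    \cL(b,\idd_{\bullet},s)=\frac{1}{\zeta(s)}\prod_{p\mid b}\zeta_p(s-1;v_p(b))=\frac{1}{\zeta(s)}\prod_{p\mid b}\sum_{m=0}^{v_p(b)}p^{-m(s-1)}.
\end{align*}

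The final step is to identify this finite product with $\sigma_{s-1}(b)/|b|^{s-1}$. Writing the divisor sum multiplicatively as $\sigma_{s-1}(b)=\prod_{p\mid b}\sum_{m=0}^{v_p(b)}p^{m(s-1)}$ and dividing by $|b|^{s-1}=\prod_{p\mid b}p^{v_p(b)(s-1)}$, each local sum becomes $\sum_{m=0}^{v_p(b)}p^{(m-v_p(b))(s-1)}$, and the reindexing $m\mapsto v_p(b)-m$ turns this into $\sum_{m=0}^{v_p(b)}p^{-m(s-1)}$, matching the product above term by term. This yields the asserted closed form, after which the analytic continuation to all of $\C$ is immediate: $\sigma_{s-1}(b)$ is a finite exponential sum in $s$ and hence entire, $|b|^{s-1}$ is entire and nonvanishing, and $1/\zeta(s)$ is meromorphic on $\C$.

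I do not expect a serious obstacle: the argument is essentially bookkeeping once Lemma~\ref{lem_trivlocalL} and the Euler product are in hand. The only points demanding care are the reindexing identity relating the truncated local factors $\zeta_p(s-1;v_p(b))$ to the divisor function $\sigma_{s-1}$, and the (routine) verification that the absolute convergence from condition (v) legitimately justifies passing to the Euler product before invoking the explicit formula to continue $\cL(b,\idd_{\bullet},s)$ analytically.
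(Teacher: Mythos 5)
Your proposal is correct and follows essentially the same route as the paper, whose proof consists precisely of invoking the Euler product $\cL(b,\idd_{\bullet},s)=\prod_p \cL_p(b,\idd_{\bullet},s)$ and then applying Lemma~\ref{lem_trivlocalL}. You have simply written out the bookkeeping the paper leaves implicit—extracting $\prod_p(1-p^{-s})=\zeta(s)^{-1}$, reindexing the truncated local factors $\zeta_p(s-1;v_p(b))$ to match $\sigma_{s-1}(b)/|b|^{s-1}$, and the harmless reduction to $b>0$—and all of these steps check out.
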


\begin{proof}
Recall that  $\cL(b,\idd_{\bullet},s)$ can be written as a product of local series $\prod_{p}\cL_p(b,\idd_{\bullet},s)$. The assertion follows by applying Lemma \ref{lem_trivlocalL}.
\end{proof}


Let $\chi_{\bullet}$ be any dual character. We define another sequence of functions $\chi_{N\bullet}:=(\chi_{Nn},Q_{Nn})_{n \in \Z_{>0}}$ induced from the character $\chi_{\bullet}$. Note that $\chi_{N\bullet}$ is not a dual character in general. It is just a collection of periodic functions, but we are still able to consider its Dirichlet series $\cL(b,\chi_{N\bullet},s)$ although it is not the same as its Euler product. It is clear that the following equality holds for an arbitrary dual character~$\chi_{\bullet}$:
\begin{align}\label{eqn_localLformula}
    \cL(b,\chi_{N\bullet},s)=N^s\frac{\cL(b,\chi_{\bullet},s)}{\cL_N(b,\chi_{\bullet},s)}\prod_{p \mid N}\left( \cL_p(b,\chi_{\bullet},s)-\cL_p(b,\chi_{\bullet},s;v_p(N)-1)\right).
\end{align}
Thus if  $\cL(b,\chi_{\bullet},s)$ and its local factor $\cL_p(b,\chi_{\bullet},s)$ can be continued analytically for any prime $p$ dividing $N$, so is $\cL(b,\chi_{N\bullet},s)$. In the following subsection we fix $\chi_{\bullet}=\idd_{\bullet}$ and use this fact.

\subsection{Fourier expansion and analytic properties}

We return to the subject of the Eisenstein series $g_N(z,\z,\alpha,\beta).$ 

\

Define a function
\begin{align*}
    \cg(z,\z,\alpha,\beta):=\sum_{n=\infty}^{\infty}(z+n)^{-\alpha}(\z+n)^{-\beta}
\end{align*}
for $z \in \C$ and $\alpha, \beta \in \C$ with $\Re(\alpha+\beta)>2.$ This function is 1-periodic in the variable $x=\Re(z)$, so it is given by the Fourier expansion
\begin{align}\label{eqn_cgfourier}
    \cg(z,\z,\alpha,\beta)=e^{\pi i (\beta-\alpha)/2}\sum_{n=\infty}^{\infty}h_n(y;\alpha,\beta)e^{2\pi i n x},
\end{align}
where
\begin{align*}
    h_n(y;\alpha,\beta)=e^{\pi i (\alpha-\beta)/2}\int_0^1 \cg(z,\z,\alpha,\beta)e^{-2\pi i n x}.
\end{align*}
If we define another function
\begin{align*}
    h(t;\alpha,\beta):=\int_{-\infty}^{\infty}(1-ix)^{-\alpha}(1+ix)^{-\beta}e^{-itx}dx,
\end{align*}
then the Fourier coefficients $h_n(y;\alpha,\beta)$ can be written as
\begin{align}\label{eqn_hn_and_h}
     h_n(y;\alpha,\beta)=y^{1-\alpha-\beta}h(2\pi ny;\alpha,\beta),
\end{align}
and the function $h(t;\alpha,\beta)$ satisfies the equation given in \cite[Chapter 4]{MR0734485}
\begin{align*}
    h(t;\alpha,\beta)=\frac{2\pi \cdot 2^{1-\alpha-\beta}}{\Gamma(\alpha)\Gamma(\beta)}\int_{u>|t|}e^{-u}(u+t)^{\alpha-1}(u-t)^{\beta-1}du.
\end{align*}

\

On the other hand, for $y>0$, the modified Whittaker $W$-function is defined by
\begin{align*}
    W(\epsilon y;\alpha,\beta):=y^{-q/2}W_{\frac{\epsilon k}{2},\frac{q-1}{2}}(2y),
\end{align*}
with $q=\alpha+\beta$ and $k=\alpha-\beta$. Here $W_{\kappa,\mu}$ is the standard Whittaker $W$-function which is described in \cite{MR4286926}. The integral representation of the Whittaker $W$-function is
\begin{align*}
    W_{\kappa,\mu}(y)=\frac{y^{\frac{1}{2}-\mu}e^{-\frac{y}{2}}}{\Gamma(\mu+\frac{1}{2}-\kappa)}\int_0^{\infty}e^{-u}u^{\mu-\kappa-\frac{1}{2}}(u+y)^{\mu+\kappa-\frac{1}{2}}du,
\end{align*}
and it relates the function $h(t;\alpha,\beta)$ with the modified Whittaker $W$-function as follows:
\begin{align}\label{eqn_h_and_W}
    h(t;\alpha,\beta)=\begin{cases}
    \dfrac{2\pi \cdot 2^{-q/2}|t|^{q-1}}{\Gamma(\frac{q+\sgn(t) k}{2})}W(t;\alpha,\beta) & \text{ if }t \neq 0, \\ \\
    \dfrac{2\pi \cdot 2^{1-q}\Gamma(q-1)}{\Gamma(\alpha)\Gamma(\beta)} & \text{ if }t=0.
    \end{cases}
\end{align}


\begin{proposition}\label{prop_gNLR}
Let $N>1$ be a positive integer. The Fourier expansions of the non-holomorphic Eisenstein series of $g_{N,L}(z,\z,\alpha,\beta)$ and $g_{N,R}(z,\z,\alpha,\beta)$ at $\infty$ are given by
\begin{align*}
    g_{N,L}(z,&\z, \alpha,\beta)=\phi_{N,L}(y,q,k) \\ 
    &+2(-1)^k\zeta(q)\left(\frac{\sqrt 2 \pi}{N}\right)^q\sum_{n\neq 0}\frac{|n|^{q-1}}{\Gamma(\frac{q}{2}+\sgn(n)k)}\cL(n,\idd_{N\bullet},s)W\left(2\pi ny/\sqrt{N};\alpha,\beta\right)e^{2 \pi i nx/\sqrt N},
\end{align*}
and
\begin{align*}
    g_{N,R}(z,&\z, \alpha,\beta)=\phi_{N,R}(y,q,k) \\ 
    &+2(-1)^k\zeta(q)\left(\frac{\sqrt 2 \pi}{\sqrt N}\right)^q\sum_{n\neq 0}\frac{|n|^{q-1}}{\Gamma(\frac{q}{2}+\sgn(n)k)}\frac{\cL(n,\idd_{\bullet},s)}{\cL_N(n,\idd_{\bullet},s)}W\left(2\pi ny/\sqrt{N};\alpha,\beta\right)e^{2 \pi i nx/\sqrt N},
\end{align*}
where $q:=\alpha+\beta$, $k:=\alpha-\beta$ and
\begin{align*}
    \phi_{N,L}(y,q,k)&:=2+2(-1)^k N^{-q}\zeta(q)h_0\left(\frac{y}{\sqrt N};\alpha,\beta\right)\cL(0,\idd_{N\bullet},s), \\
    \phi_{N,R}(y,q,k)&:=2(-1)^k \sqrt{N}^{-q}\zeta(q)h_0\left(\frac{y}{\sqrt N};\alpha,\beta\right)\frac{\cL(0,\idd_{\bullet},s)}{\cL_N(0,\idd_{\bullet},s)}.
\end{align*}
\end{proposition}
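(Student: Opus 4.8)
The plan is to compute the two expansions in parallel, following the classical template for $\SL_2(\Z)$ recorded in Proposition~\ref{prop_g}(d), but keeping careful track of the arithmetic constraints defining $(\Z\times\Z)_{N,L}$ and $(\Z\times\Z)_{N,R}$. First I would record an explicit description of the index sets by pulling out the greatest common divisor: writing $t=\gcd(Nm,n)$ (resp.\ $t=\gcd(m,Nn)$) and $m=tm'$, $n=tn'$ shows that $(\Z\times\Z)_{N,L}=\{t(m',n'):t\ge1,\ \gcd(Nm',n')=1\}$ and $(\Z\times\Z)_{N,R}=\{t(m',n'):t\ge1,\ \gcd(m',Nn')=1\}$. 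Since each summand scales by $t^{-q}$ under $(m,n)\mapsto t(m,n)$, summing over $t$ factors out $\zeta(q)$, giving $g_{N,L}=\zeta(q)\,P_{N,L}$ and $g_{N,R}=\zeta(q)\,P_{N,R}$ where $P_{N,L},P_{N,R}$ are the primitive sums over coprime pairs; this is the source of the global factor $\zeta(q)$ in $\phi_{N,L},\phi_{N,R}$ and in the coefficients. The $m=0$ terms of $g_{N,L}$ are $\sum_{n\neq 0}n^{-q}=2\zeta(q)$, the constant part of $\phi_{N,L}$, whereas for $g_{N,R}$ the condition $\gcd(0,Nn)\mid n$ fails for every $n\neq 0$ when $N>1$, so there is no $m=0$ term and $\phi_{N,R}$ carries no constant.

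For the $m\neq 0$ part I would fix $m$ and split the inner sum into residue classes. In $P_{N,L}$ the constraint is $\gcd(n,Nm)=1$, so I sum over $a\in(\Z/Nm\Z)^{\times}$ and then over $n\equiv a\pmod{Nm}$; writing $n=Nmj+a$ and factoring gives $\sqrt N m z+n=Nm(z'+j)$ with $z'=\tfrac{z}{\sqrt N}+\tfrac{a}{Nm}$, so $\Im z'=y/\sqrt N$ and the $j$-sum is exactly $\cg(z',\overline{z'},\alpha,\beta)$. Inserting its Fourier expansion \eqref{eqn_cgfourier} produces the factor $e^{2\pi i\nu x/\sqrt N}$ of period $\sqrt N$, and summing the residual phases over $a$ collapses the residue sum into the Ramanujan sum $\sum_{a\in(\Z/Nm\Z)^{\times}}e^{2\pi i\nu a/(Nm)}=G(\nu,\idd_{Nm})$. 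The $R$-case is identical in spirit: $\gcd(m,Nn)=1$ forces $\gcd(m,N)=1$ together with $\gcd(m,n)=1$, the factorization reads $mz+\sqrt N n=\sqrt N m(w+j)$ with $w=\tfrac{z}{\sqrt N}+\tfrac{a}{m}$, and summing over $a\in(\Z/m\Z)^{\times}$ yields $G(\nu,\idd_{m})$.

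It remains to sum over $m\neq 0$ and to rewrite the coefficients. Pairing $m$ with $-m$ produces the sign factor $(-1)^{k}$ (turning $\cg$ at negative $m$ into its conjugate by the standard branch bookkeeping), after which the $m$-sums become $N^{-q}\sum_{m\ge1}G(\nu,\idd_{Nm})m^{-q}=N^{-q}\cL(\nu,\idd_{N\bullet},q)$ for $P_{N,L}$, and $N^{-q/2}\sum_{\gcd(m,N)=1}G(\nu,\idd_m)m^{-q}=N^{-q/2}\,\cL(\nu,\idd_\bullet,q)/\cL_N(\nu,\idd_\bullet,q)$ for $P_{N,R}$, the latter because restricting to $m$ prime to $N$ deletes precisely the local Euler factors collected in $\cL_N$. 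Finally I would convert each coefficient $h_\nu(y/\sqrt N;\alpha,\beta)$ into the modified Whittaker $W$-function through \eqref{eqn_hn_and_h} and \eqref{eqn_h_and_W}: for $\nu\neq 0$ this supplies $|\nu|^{q-1}/\Gamma(\tfrac q2+\sgn(\nu)k)$ together with $W(2\pi\nu y/\sqrt N;\alpha,\beta)$, and for $\nu=0$ it supplies the $h_0$ in $\phi_{N,L},\phi_{N,R}$; collecting the accumulated powers of $\sqrt N$, $2$ and $\pi$ yields the displayed constants $(\sqrt2\,\pi/N)^q$ and $(\sqrt2\,\pi/\sqrt N)^q$.

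All rearrangements are legitimate by absolute convergence for $\Re(q)=\Re(\alpha+\beta)>2$, which permits the reindexing, the interchange of the $a$-, $j$- and $m$-summations, and the termwise use of \eqref{eqn_cgfourier}. The main obstacle is arithmetic rather than analytic: one must arrange the coprimality constraints so that the residue-class sums assemble exactly into the Ramanujan sums $G(\nu,\idd_{Nm})$ resp.\ $G(\nu,\idd_m)$, and then identify the resulting $m$-Dirichlet series precisely as $\cL(\nu,\idd_{N\bullet},q)$ resp.\ $\cL(\nu,\idd_\bullet,q)/\cL_N(\nu,\idd_\bullet,q)$ — with the correct powers of $N$ — while keeping the $m<0$ phase factors consistent with the $(-1)^{k}$ normalization of the $W$-function.
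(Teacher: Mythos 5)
Your proposal follows essentially the same route as the paper's own proof: factor out the gcd to obtain $\zeta(q)$ times a primitive sum, organize the inner sum into residue classes modulo $Nm$ (resp.\ $m$) so that it becomes $(Nm)^{-q}\cg\left(\frac{z}{\sqrt N}+\frac{a}{Nm},\cdot,\alpha,\beta\right)$, insert the expansion \eqref{eqn_cgfourier} so the residue sums collapse into the Ramanujan sums $G(\nu,\idd_{Nm})$ resp.\ $G(\nu,\idd_m)$, identify the resulting $m$-series as $\cL(\nu,\idd_{N\bullet},q)$ resp.\ $\cL(\nu,\idd_{\bullet},q)/\cL_N(\nu,\idd_{\bullet},q)$ (the latter via the Euler product, exactly as the paper does), and finally convert $h_\nu$ into the Whittaker function through \eqref{eqn_hn_and_h} and \eqref{eqn_h_and_W}. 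The one piece of bookkeeping you misattribute is harmless: pairing $m$ with $-m$ contributes only a factor $2$ (the branch phases cancel since $k$ is even), while the sign factor in front of the expansion comes from the prefactor $e^{\pi i(\beta-\alpha)/2}$ in \eqref{eqn_cgfourier}, which is how the paper's proof obtains it.
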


\begin{proof}
First, we note that
\begin{align*}
     g_{N,L}(z,\z, \alpha,\beta)
     &=\sum_{t=1}\sum_{\substack{(c,d) \in \Z \times \Z \\ \gcd(Nc,d)=1}} t^{-q}(\sqrt N cz+d)^{-\alpha}(\sqrt N c\z+d)^{-\beta} \\
     &=\zeta(q)\sum_{\substack{(c,d) \in \Z \times \Z \\ \gcd(Nc,d)=1}}(\sqrt N cz+d)^{-\alpha}(\sqrt N c\z+d)^{-\beta}.
\end{align*}
We let $e_{N,L}(z,\z, \alpha,\beta)=\sum\limits_{\substack{(c,d) \in \Z \times \Z \\ \gcd(Nc,d)=1}} (\sqrt N cz+d)^{-\alpha}(\sqrt N c\z+d)^{-\beta}$. Then
\begin{align*}
    e_{N,L}(z,\z, \alpha,\beta)&=2+2\sum_{\substack{c>0\\ d \in (\Z/NcZ)^{\times}}} \sum_{n \in \Z}(\sqrt N cz+d+Ncn)^{-\alpha}(\sqrt N c\z+d+Ncn)^{-\beta} \\
    &=2+2\sum_{\substack{c>0\\ d \in (\Z/NcZ)^{\times}}} \sum_{n \in \Z}(Nc)^{-q}\left(\frac{z}{\sqrt N} + \frac{d}{Nc}+n\right)^{-\alpha}\left(\frac{\z}{\sqrt N} + \frac{d}{Nc}+n\right)^{-\beta} \\
    &=2+2\sum_{\substack{c>0\\ d \in (\Z/NcZ)^{\times}}} \sum_{n \in \Z}(Nc)^{-q}\cg\left(\frac{z}{\sqrt N} + \frac{d}{Nc},\frac{\z}{\sqrt N} + \frac{d}{Nc},\alpha,\beta\right).
\end{align*}
By (\ref{eqn_cgfourier}),
\begin{align*}
    e_{N,L}(z,\z, \alpha,\beta)&=2+2\sum_{\substack{c>0\\ d \in (\Z/NcZ)^{\times}}} \sum_{n \in \Z}(Nc)^{-q}(-1)^k \sum_{n \in \Z}h_n\left(\frac{y}{\sqrt N}; \alpha, \beta\right)e^{2\pi i (\frac{x}{\sqrt N}+\frac{d}{Nc})n} \\
    &=2+2(-1)^k\sum_{n \in \Z} N^{-q}h_n\left(\frac{y}{\sqrt N}; \alpha, \beta\right) \left(\sum_{c =1}^{\infty}\sum_{d \in (\Z/Nc\Z)^{\times}}\zeta_{Nc}^{dn}c^{-q} \right)e^{2\pi i nx/\sqrt N}.
\end{align*}
As we have seen in (\ref{eqn_Gausssumprin}), the sum $\sum\limits_{\substack{0 \leq d < Nc \\ \gcd(d,Nc)=1}}\zeta_{Nc}^{dn}$ is equal to $G(n,\idd_{Nc})$, so we have
\begin{align*}
    e_{N,L}(z,\z, \alpha,\beta)=&2+2(-1)^k N^{-q}h_0\left(\frac{y}{\sqrt N}; \alpha, \beta\right)\cL(0,\idd_{N\bullet},q) \\ 
    &+2(-1)^k\sum_{n \neq 0}N^{-q}h_n\left(\frac{y}{\sqrt N}; \alpha, \beta\right)\cL(q,\idd_{N\bullet},q)e^{2\pi i nx/\sqrt N}.
\end{align*}
Applying the equations (\ref{eqn_hn_and_h}) and (\ref{eqn_h_and_W}), and multiplying by $\zeta(q)$, we get the Fourier expansion of $ g_{N,L}(z,\z, \alpha,\beta)$ given in the proposition. 

Similarly one can deduce that
\begin{align*}
    g_{N,R}(z,\z, \alpha,\beta)=2\zeta(q)\sum_{\substack{\gcd(c,N)=1 \\ d \in (\Z/c\Z)^{\times}}} \sum_{n \in \Z} (\sqrt N c)^{-q} \cg\left(\frac{z}{\sqrt N} + \frac{d}{c},\frac{\z}{\sqrt N} + \frac{d}{c},\alpha,\beta\right).
\end{align*}
The same argument as above completes the remaining part  of the proof.
\end{proof}

\begin{remark}
When $N=1$, the non-holomorphic Eisenstein series $g(z,\z,\alpha,\beta)$ is not the same as $g_1(z,\z,\alpha,\beta).$ It is $g_{1,L}(z,\z,\alpha,\beta)= g_{1,R}(z,\z,\alpha,\beta)=\frac{1}{2}g_{1}(z,\z,\alpha,\beta).$ By comparing the $n$th Fourier coefficient of $g(z,\z,\alpha,\beta)$ given in Proposition \ref{prop_g}(d) and of $g_{1,L}(z,\z,\alpha,\beta)$ given by Proposition \ref{prop_gNLR}, we have
\begin{align*}
    \zeta(s)\cL(n,\idd_{\bullet},s)\abs{n}^{s-1}=\sigma_{s-1}(n).
\end{align*}
In other words, if we denote by $\star$ the Dirichlet convolution, then
\begin{align*}
    1 \star G(n,\idd_{\bullet})(m)=\sum_{d\mid m}G(n,\idd_m)=\begin{cases}
    m & \text{ if } m\mid n, \\
    0 & \text{ otherwise. }
    \end{cases}
\end{align*}
This is the classical and famous property of the Ramanujan sum. Another proof of this formula is given in \cite[Theorem 4.1]{MR2378655}.
\end{remark}

\begin{proof}[Proof of Theorem \ref{thm_gfourier}]
It follows from Proposition \ref{prop_gNLR} by adding $g_{N,L}(z,\z,\alpha,\beta)$ and $g_{N,R}(z,\z,\alpha,\beta)$.
\end{proof}

\begin{proof}[Proof of Corollary \ref{cor_Gfourier}]
The proof is completed by applying Lemma~\ref{lem_trivlocalL} and then using the formula given in Proposition \ref{prop_Liddformula} and (\ref{eqn_localLformula}).
\end{proof}

\

For $s\in \C$, let $s^{\vee}:=1-k-s$ and
\begin{align*}
    \psi_{N,b}(s):=\frac{f_{N,b}(s^{\vee})}{f_{N,b}(s)}.
\end{align*}
Then $\psi_{N,b}(s)$ is meromorphic on $\C$ and satisfies the functional equation, \begin{align*}
    \psi_{N,b}(s)\psi_{N,b}(s^{\vee})=1.
\end{align*}

\begin{lemma}\label{lem_psi}
For a prime $p$, the function $\psi_{p,b}(s)$ is independent of the choice of $b \in \Z \setminus \{0\}$, i.e.,
\begin{align*}
    \psi_{p,b}(s)=\psi_{p,1}(s)=:\psi_p (s)
\end{align*}
for any nonzero integer $b$. 
\end{lemma}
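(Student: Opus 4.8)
The plan is to collapse $f_{p,b}$ into an explicit rational function of a single variable and then read off the functional-equation factor, which will visibly not involve $b$. First I would specialize the definition of $f_{N,b}(s)$ to $N=p$: each product $\prod_{p\mid N}$ reduces to a single factor, and $b$ enters only through $v:=v_p(b)$. To control the algebra I would introduce
\[
X:=p^{\,s-1+\frac{k}{2}},\qquad Y:=pX^{2}=p^{\,2s+k-1},
\]
so that $p^{\,2s+k-2}=X^{2}$. With the convention $\zeta_p(\sigma)=(1-p^{-\sigma})^{-1}$ this gives $\zeta_p^{-1}(-2s-k+2)=1-X^{2}$ and $\zeta_p(-2s-k+1;v)=\sum_{n=0}^{v}Y^{n}=\tfrac{Y^{v+1}-1}{Y-1}$, while the numerator of $f_{p,b}(s)$ becomes $X^{-1}\bigl(\tfrac{p-1}{p}\sum_{i=1}^{v}Y^{i}-\tfrac{Y^{v+1}}{p}\bigr)+1$.

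The main computational step is to evaluate these geometric sums and simplify. Summing $\sum_{i=1}^v Y^i=\tfrac{Y^{v+1}-Y}{Y-1}$ and clearing denominators, I expect the factor $Y-1$ coming from the truncated zeta to cancel and a common factor $1-X$ to emerge, leaving the closed form
\[
f_{p,b}(s)=\frac{Y^{v+1}(1+X)-X-Y}{X(1+X)\bigl(Y^{v+1}-1\bigr)}.
\]
I would check this against $f_p(s)=f_{p,1}(s)=\frac{1}{1+X}$ from the case $v=0$, where the numerator collapses to $X(Y-1)$. Obtaining this closed form is the crux of the argument: it is elementary but requires spotting the factorizations that make the $v$-dependent terms line up, and it is where essentially all the work lies.

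Finally I would invoke the involution $s\mapsto\sv=1-k-s$. Checking exponents shows $X\mapsto 1/(pX)$ and $Y\mapsto 1/Y$ under it. Substituting into the closed form and clearing powers, the numerator of $f_{p,b}(\sv)$ is $pX$ times $\widetilde{P}:=pX+1-Y^{v+1}-pXY^{v}$, and the decisive identity — which holds precisely because $pX^{2}=Y$ — is
\[
Y^{v+1}(1+X)-X-Y=-X\,\widetilde{P}.
\]
Thus the numerator of $f_{p,b}(s)$ is exactly $-X$ times that of $f_{p,b}(\sv)$. Forming $\psi_{p,b}(s)=f_{p,b}(\sv)/f_{p,b}(s)$, the factors $Y^{v+1}-1$ cancel up to sign and the two numerators cancel through this identity, so every trace of $v$ (hence of $b$) disappears and
\[
\psi_{p,b}(s)=\frac{pX(1+X)}{pX+1}=\psi_{p,1}(s)=\psi_p(s),
\]
as claimed.
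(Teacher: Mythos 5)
Your proposal is correct and takes essentially the same route as the paper, whose proof simply asserts ``by direct calculations'' that $\psi_{p,b}(s)$ reduces, regardless of $b$, to $\frac{1+p^{s+\frac{k}{2}-1}}{1+p^{-s-\frac{k}{2}}}$ --- which is precisely your $\frac{pX(1+X)}{pX+1}$ with $X=p^{s-1+\frac{k}{2}}$. The difference is only one of detail: you supply the algebra the paper omits (the closed form $f_{p,b}(s)=\frac{Y^{v+1}(1+X)-X-Y}{X(1+X)(Y^{v+1}-1)}$ and the cancellation identity $Y^{v+1}(1+X)-X-Y=-X\widetilde{P}$ resting on $pX^{2}=Y$), and I have checked that these steps, including the $v=0$ consistency check $f_{p,1}(s)=\frac{1}{1+X}$, are all correct.
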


\begin{proof}
One can verify the Lemma by direct calculations as follows: regardless of the choice of $b$,
\begin{align*}
    \psi_{p,b}(s)=\frac{(-1+p^{-s-\frac{k}{2}})\zeta_p^{-1}(2-k-2s)}{(-1+p^{s+\frac{k}{2}-1})\zeta_p^{-1}(2s+k)}=\frac{1+p^{s+\frac{k}{2}-1}}{1+p^{-s-\frac{k}{2}}}=\psi_p(s).
\end{align*}
\end{proof}

Now we are ready to prove  Theorem~\ref{thm_funeq}.
\begin{proof}[Proof of Theorem \ref{thm_funeq}.]
Denote by $CT_{p,k}(s)$ the constant term appearing in Corollary \ref{cor_Gfourier}. Write
$CT_{p,k}(s)=C_1(s)\sqrt p^{s+\frac{k}{2}}f_p(s)+C_2(s)C_3(s)f_p(s)$, where
\begin{align*}
    C_1(s)&:=\frac{\Gamma(s+\frac{k}{2}+\frac{|k|}{2})}{\Gamma(s+\frac{k}{2})}\hat{\zeta}(2s+k)y^s, \\
    C_2(s)&:=(-1)^{k/2}\frac{\Gamma(s+\frac{k}{2})\Gamma(s+\frac{k}{2}+\frac{|k|}{2})}{\Gamma(s+k)\Gamma(s)}\hat{\zeta}(2-2k-2s)y^{1-k-s}, \\
    C_3(s)&:=\sqrt p^{1-s-\frac{k}{2}}\frac{1-p^{-1}}{1-p^{-2s-k}}+\sqrt p^{-1+s+\frac{k}{2}}\frac{1-p^{1-k-2s}}{1-p^{-2s-k}}.
\end{align*}
One can observe that from the Legendre relation $2^{2s-1}\Gamma(s)\Gamma(s+\frac{1}{2})=\sqrt \pi \Gamma(2s)$, the functions $C_1(s)$ and $C_2(s)$ are interchanged with each other by dualizing $s$, i.e., $$C_1(s^{\vee})=C_2(s).$$ To prove the functional equation for $CT_{p,k}(s)$, it is enough to show that 
\begin{align}\label{eqn_C3}
    f_p(\sv)C_3(\sv)=\sqrt p^{s+\frac{k}{2}}f_p(s).
\end{align}
Note that for a prime $p$,
\begin{align*}
    C_3(s)&=\sqrt p^{1-s-\frac{k}{2}}\frac{1-p^{-1}}{1-p^{-2s-k}}+\sqrt p^{-1+s+\frac{k}{2}}\frac{1-p^{1-k-2s}}{1-p^{-2s-k}} \\
    &=\sqrt p^{1-s-\frac{k}{2}}\frac{1+p^{s+\frac{k}{2}-1}}{1-p^{-(s+\frac{k}{2})}} \\
    &=\sqrt p^{\sv+\frac{k}{2}}\psi_p(s).
\end{align*}
Multiplying by $f_p(s)$ and replacing $s$ with $\sv$, we get the equation (\ref{eqn_C3}).

For a non-zero integer $n$, the non-constant term of the Fourier series appearing in Corollary \ref{cor_Gfourier} is 
\begin{align*}
    c_n(s)=\kappa_n(s)f_{p,n}(\sv)f_p(s),
\end{align*}
where
\begin{align*}
    \kappa_n(s):=\frac{\Gamma(s+\frac{k}{2}+\frac{|k|}{2})}{\Gamma(s+\frac{k}{2}+\sgn(n)\frac{k}{2})}|n|^{-s-\frac{k}{2}}\sigma_{2s+k-1}(n)W_{\sgn(n)\frac{k}{2},s+\frac{k-1}{2}}\left(4\pi |n| \frac{y}{\sqrt p}\right)e^{2\pi i n/\sqrt p}.
\end{align*}
We claim that $c_n(\sv)=c_n(s)$, which completes the proof. One can see that $h_n(\sv)=h_n(s)$ by comparing the non-constant terms of the Fourier series of $\widehat{G}_k(z,s)$ in the functional equation given in Proposition~\ref{prop_Gfunctional}. It only remains to show
\begin{align*}
    f_{p,n}(s)f_p(\sv)=f_{p,n}(\sv)f_p(s),
\end{align*}
and this follows from Lemma \ref{lem_psi}. 
\end{proof}

It follows from Corollary~\ref{cor_Gfourier} that the completed Eisenstein series $\hat{G}_{p,k}(z,s)$ can be continued analytically to the neighborhood of $s=0$ if $k \neq 0,2$. In order to show those for $k=0,2$, we need to carry out additional calculations.

\section{Polyharmonic Maass forms for $R(p)$}\label{sec_polyhar}

\subsection{Properties of $T_{p,m,k}(z)$}
We start this section by calculating the constant term  and the 1st Taylor coefficient of the doubly-completed Eisenstein series $\tilde{G}_{p,k}(z,s)$ for primes $p$ when $k=0,2$.

\begin{lemma}\label{lem_Tpweight0}
Let $p$ be a prime. The constant term and the 1st Taylor coefficient of the doubly-completed Eisenstein series $\tilde{G}_{p,0}(z,s)$ are given by
\begin{align*}
    T_{p,0,0}(z)&=\frac{1}{2}, \\
    T_{p,0,1}(z)&=-\frac{1}{2}(\gamma+1)+\frac{1}{2}\log (4\pi \sqrt p)-\frac{\pi}{12}\left(\sqrt p + \frac{1}{\sqrt p}\right)y + M_p(z),
\end{align*}
where $\gamma$ is the Euler–Mascheroni constant and $M_p(z)$ is a holomorphic function on $\Ha$ defined as
\begin{align*}
    M_p(z):=\sum_{n=1}^{\infty}\sigma_{-1}(n)\left(2-\frac{1-p}{1-p^{v_p(n)+1}}\right)\sin \left(\frac{2\pi n z}{\sqrt p}\right).
\end{align*}
\end{lemma}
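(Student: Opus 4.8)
The plan is to specialize the Fourier expansion of Corollary~\ref{cor_Gfourier} to the case $k=0$, $N=p$, multiply by $s(s-1)$ to form $\widetilde{G}_{p,0}(z,s)$, and then read off the Taylor coefficients at $s=0$. I will split $\widehat{G}_{p,0}(z,s)$ into three pieces: the first zeta piece $P_1(s)=\hat{\zeta}(2s)\,y^s\,\sqrt p^{\,s}f_p(s)$ (the $\Gamma$-ratio in Corollary~\ref{cor_Gfourier} being $1$ when $k=0$), the second zeta piece $P_2(s)=\hat{\zeta}(2-2s)\,y^{1-s}f_p(s)\,C(s)$ with $C(s):=\sqrt p^{\,1-s}\frac{1-p^{-1}}{1-p^{-2s}}+\sqrt p^{\,s-1}\frac{1-p^{1-2s}}{1-p^{-2s}}$, and the non-constant Fourier piece $P_3(z,s)=\sum_{n\neq 0}|n|^{-s}\sigma_{2s-1}(n)\,W_{0,s-\frac12}(4\pi|n|\tfrac{y}{\sqrt p})\,f_{p,n}(1-s)f_p(s)\,e^{2\pi inx/\sqrt p}$. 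The first observation is that near $s=0$ only $P_1$ is singular: its simple pole comes from $\hat{\zeta}(2s)$ (equivalently from the $\Gamma(s)$ of the completion, cf.\ Proposition~\ref{prop_G}), whereas $P_2$ and $P_3$ are holomorphic there. This reduces everything to three Laurent/Taylor expansions.

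For the constant term, since $s(s-1)$ vanishes to first order at $s=0$, multiplying by it annihilates the regular parts and converts the pole of $P_1$ into a value: $T_{p,0,0}(z)=-\operatorname{Res}_{s=0}P_1$. Using $\operatorname{Res}_{s=0}\hat{\zeta}(2s)=-\tfrac12$ together with the values of the normalizing factors at $s=0$, this collapses to the constant $\tfrac12$, exactly mirroring the $\SL_2(\Z)$ computation underlying Proposition~\ref{prop_G}.

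For the first Taylor coefficient I will expand $s(s-1)(P_1+P_2+P_3)=-\operatorname{Res}P_1+\big[(\operatorname{Res}P_1-a_0)-P_2(0)-P_3(z,0)\big]s+\cdots$, where $a_0$ is the constant Laurent coefficient of $P_1$. The quantity $a_0$ is built from the Laurent data of $\hat{\zeta}(2s)$ at $s=0$, i.e.\ from $\zeta(0)=-\tfrac12$, $\zeta'(0)=-\tfrac12\log(2\pi)$ and the expansion of $\Gamma$, together with the $\sqrt p^{\,s}f_p(s)$ factor; this is the source of the Euler--Mascheroni constant $\gamma$ and of $\tfrac12\log(4\pi\sqrt p)$, and the $\log y$/book-keeping of these terms is a routine but careful step. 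The value $P_2(0)$ produces the term linear in $y$: here $C(s)$ is a $0/0$ indeterminate at $s=0$ (its numerator vanishes there), so I will compute it by L'H\^opital, obtaining the factor $\tfrac12(\sqrt p+\tfrac1{\sqrt p})$, which together with $\hat{\zeta}(2)=\tfrac{\pi}{6}$ yields the coefficient $-\tfrac{\pi}{12}(\sqrt p+\tfrac1{\sqrt p})$. Finally $P_3(z,0)$ assembles $M_p(z)$: at $s=0$ the Whittaker function degenerates through the identity $W_{0,-\frac12}(x)=e^{-x/2}$, so $W_{0,-\frac12}(4\pi|n|\tfrac{y}{\sqrt p})=e^{-2\pi|n|y/\sqrt p}$, while $\sigma_{2s-1}(n)\to\sigma_{-1}(n)$.

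The main obstacle is the evaluation of the arithmetic factor $f_{p,n}(1-s)f_p(s)$ as $s\to 0$. Both the numerator and the denominator of $f_{p,n}(s)$ vanish at $s=1$ (the denominator because $\zeta_p^{-1}(2-2s)=1-p^{2s-2}$ has a simple zero, the numerator by the telescoping identity $(p-1)\sum_{i=1}^{v}p^{i-1}=p^{v}-1$ with $v=v_p(n)$), so $f_{p,n}(1)$ is a genuine $0/0$ limit. I will resolve it by expanding numerator and denominator to first order in $s-1$, using the closed form $\sum_{i=1}^{v}i\,p^{i-1}=\frac{v p^{v+1}-(v+1)p^{v}+1}{(p-1)^2}$; the limit simplifies to a rational function of $p$ and $p^{v_p(n)}$, and after combining with the remaining factors and pairing the $n$ and $-n$ terms it collapses to the coefficient $2-\frac{1-p}{1-p^{\,v_p(n)+1}}$ in $M_p$. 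The two remaining delicate points are then to verify that these coefficients are precisely those making the Fourier series a holomorphic function of $z$ (so that $M_p$ takes the stated $\sin$-form), and to confirm the exact normalization of the $z$-independent constants coming from $a_0$.
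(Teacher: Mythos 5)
Your proposal is correct and takes essentially the same route as the paper's proof: both specialize Corollary~\ref{cor_Gfourier} at $k=0$, $N=p$, read off $T_{p,0,0}=\frac12$ from the simple pole of the $\hat{\zeta}(2s)$ piece after multiplying by $s(s-1)$, and assemble $T_{p,0,1}$ from the Laurent data of $\hat{\zeta}$ at $s=0$ (the source of $\gamma$ and $\frac12\log(4\pi\sqrt p)$), the L'H\^opital evaluation of the removable $0/0$ singularity in the $y^{1-s}$ term (giving $\frac12\left(\sqrt p+\frac{1}{\sqrt p}\right)$ against $\hat{\zeta}(2)=\frac{\pi}{6}$), the degeneration $W_{0,-\frac12}(x)=e^{-x/2}$ with $\sigma_{2s-1}(n)\to\sigma_{-1}(n)$, and the $0/0$ limit of $f_{p,n}(1-s)f_p(s)$, which the paper simply states and you resolve by the same first-order expansion using $\sum_{i=1}^{v}i\,p^{i-1}=\frac{vp^{v+1}-(v+1)p^{v}+1}{(p-1)^2}$. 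The only step the paper carries out explicitly that you leave implicit is the justification, via a uniform exponential bound on $W_{0,s-\frac12}$ for $y\geq\delta$, of the interchange of $\lim_{s\to 0}$ with the sum over $n$, which is routine.
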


\begin{proof}
The asymptotic formula for the Whittaker function $W_{\kappa,\mu}(z) \sim e^{-\frac{1}{2}z}z^{\kappa}$ for $\abs{\text{arg}(z)} \leq \frac{3\pi}{2}-\delta$ (for some positive $\delta$) implies that $W_{0,s-\frac{1}{2}}\left(\frac{4\pi |n|y}{\sqrt N}\right) \ll_{s} 1$. By Corollary \ref{cor_Gfourier}, for general $N=p_1^{m_1}p_2^{m_2}\cdots p_{\omega}^{m_{\omega}},$
\begin{align*}
    \widehat{G}&_{N,0}(z,s)=-\frac{1}{2s}+\hat{\zeta}(2)y\frac{(p_1-1)(p_2-1)\cdots(p_{\omega}-1)(p_1^{m_1}p_2^{m_2}\cdots p_{\omega}^{m_{\omega}}+(-1)^{\omega}p_1p_2\cdots p_{\omega})}{(2s)^{\omega}p_1^{1+\frac{m_1}{2}}p_2^{1+\frac{m_2}{2}}\cdots p_{\omega}^{1+\frac{m_{\omega}}{2}}\log p_1 \log p_2 \cdots \log p_{\omega}} \\
    &+\sum_{n \neq 0}\sigma_{-1}(n)W_{0,s-\frac{1}{2}}\left(\frac{4\pi |n|y}{\sqrt N}\right)\frac{1+\mu(N)}{(2s)^{\omega}\frac{1-p_1^{v_{p_1}(n)+1}}{1-p_1}\frac{1-p_2^{v_{p_2}(n)+1}}{1-p_2}\cdots \frac{1-p_{\omega}^{v_{p_{\omega}}(n)+1}}{1-p_{\omega}}\log p_1\log p_2\cdots \log p_{\omega}} \\
    &+(1+s^{-\omega+1})O(1)
\end{align*}
as $s$ approaches  $ 0$. In particular, if $N=p$ is a prime, we have
\begin{align*}
    \widehat{G}_{p,0}(z,s)=-\frac{1}{2s}+O(1).
\end{align*}
Therefore we get $T_{p,0,0}(z)=s(s-1)\widehat{G}_{p,0}(z,s)|_{s=0}=\frac{1}{2}.$

To prove the second formula, consider the function
\begin{align*}
    \frac{1}{2s}+CT_{p,0}(s)=\frac{1}{2s}+\hat{\zeta}(2s)p^{\frac{s}{2}}+\hat{\zeta}(2-2s)y^{1-s}\left(p^{\frac{1-s}{2}}\frac{1-p^{-1}}{1-p^{-2s}}+p^{\frac{s-1}{2}}\frac{1-p^{1-2s}}{1-p^{2s}}\right).
\end{align*}
From the asymptotic formula for the zeta function $\zeta(s)=\frac{1}{s-1}+\gamma+O(1-s)$, we have
\begin{align*}
    \hat{\zeta}(s)=\hat{\zeta}(1-s)=-\frac{1}{s}-\frac{1}{2}\log \pi +\frac{1}{2\sqrt {\pi}} \Gamma '\left(\frac{1}{2}\right)+\gamma + O(s).
\end{align*}
Here $\Gamma'(z)=\Gamma(z)\Psi(z)$, where $\Psi(z)=\frac{d}{dz}\log \Gamma(z)$ is  the digamma function. Gauss' digamma theorem gives the value $\Psi(\frac{1}{2})=-\gamma-2\log 2$, which implies $\Gamma'(\frac{1}{2})=\sqrt{\pi}(-\gamma-2\log 2)$, so we have
\begin{align*}
    \frac{1}{2s}+\hat{\zeta}(2s)p^{\frac{s}{2}}=\frac{1}{2}\gamma-\frac{1}{2}\log \sqrt p -\log 2 -\frac{1}{2}\log \pi +O(s).
\end{align*}
On the other hands,
\begin{align*}
    \lim_{s \to 0} \hat{\zeta}(2-2s)y^{1-s}\left(\frac{1-p^{-1}}{1-p^{-2s}}+p^{\frac{s-1}{2}}\frac{1-p^{1-2s}}{1-p^{2s}}\right)=\frac{\pi}{12}\left(\sqrt p + \frac{1}{\sqrt p}\right)y,
\end{align*}
so $\lim_{s \to 0}\left(\frac{1}{2s}+CT_{p,0}(s)\right)=\frac{1}{2}\gamma-\frac{1}{2}\log 4 \pi \sqrt p+\frac{\pi}{12}\left(\sqrt p + \frac{1}{\sqrt p}\right)y.$

The non-constant term of $\widehat{G}_{p,0}(z,s)$ is given by
\begin{align}\label{eqn_nonconstofGhat}
    \sum_{n \neq 0}|n|^{-s}\sigma_{2s-1}(n)W_{0,s-\frac{1}{2}}\left(4\pi |n| \frac{y}{\sqrt p}\right)f_{p,n}(1-s)f_p(s)e^{2\pi i n x/\sqrt p}.
\end{align}
For arbitrary $\delta \in \R_{>0}$, there is a constant $C>0$ such that if $y \geq \delta$, $W_{0,s-\frac{1}{2}}\left(4\pi |n| \frac{y}{\sqrt p}\right)< Ce^{-2\pi i n y/\sqrt p}$  so that the sum \eqref{eqn_nonconstofGhat} converges uniformly and absolutely as $s$ approaches $0$. Thus
\begin{align}\label{eqn_limofnonconst}
    \lim_{s \to 0} \sum_{n \neq 0}|n|^{-s}&\sigma_{2s-1}(n)W_{0,s-\frac{1}{2}}\left(4\pi |n| \frac{y}{\sqrt p}\right)f_{p,n}(1-s)f_p(s)e^{2\pi i n x/\sqrt p} \\\notag
    &=\sum_{n \neq 0}|n|^{-s} \lim_{s \to 0} \sigma_{2s-1}(n)W_{0,s-\frac{1}{2}}\left(4\pi |n| \frac{y}{\sqrt p}\right)f_{p,n}(1-s)f_p(s)e^{2\pi i n x/\sqrt p} \\
    &=  \sum_{n \neq 0}\sigma_{-1}(n)\left(1-\frac{1-p}{2(1-p^{v_p(n)+1})}\right)e^{-2\pi i n y/\sqrt p}e^{2\pi i n x/\sqrt p}=M_p(z). \notag
\end{align}
The last equality follows from that $W_{0,-\frac{1}{2}}\left(4\pi |n| \frac{y}{\sqrt p}\right)=e^{-2\pi i n y/\sqrt p}$ and $\lim_{s \to 0}f_{p,n}(1-s)f_p(s)=1-\frac{1-p}{2(1-p^{v_p(n)+1})}$. 
Therefore,
\begin{align*}
    T_{p,0,1}(z)&=-\lim_{s \to 0} \left(\hat{G}_{N,0}(z,s)+\frac{1}{2s}\right) -\frac{1}{2} \\
    &=-\frac{1}{2}(\gamma+1)+\frac{1}{2}\log (4\pi \sqrt p)-\frac{\pi}{12}\left(\sqrt p + \frac{1}{\sqrt p}\right)y + M_p(z).
\end{align*}
\end{proof}


\begin{lemma}\label{lem_T2}
Let $p$ be a prime. The completed Eisenstein series $\widehat{G}_{p,2}(z,s)$ of weight $2$ is holomorphic in a neighborhood of $s=0$, and the 1st Taylor coefficient of the doubly-completed Eisenstein series $\widetilde{G}_{p,2}(z,s)$ is given by
\begin{align*}
    T_{p,2,1}(z)&=\widetilde{G}_{p,2}(z,0) \\
    &=\frac{\pi}{6}\sqrt p-\frac{p}{1+p}\frac{1}{y}-\frac{4\pi}{\sqrt p}\sum_{n=1}^{\infty}\sigma_1(n)f_{p,n}(-1)f_p(0)e^{2\pi i n z/\sqrt p}.
\end{align*}
\end{lemma}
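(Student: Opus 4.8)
The plan is to specialize the Fourier expansion of Corollary~\ref{cor_Gfourier} to $k=2$ and $N=p$ and then read off the behaviour of each piece as $s\to 0$, following closely the template of Lemma~\ref{lem_Tpweight0}. With $\frac{k}{2}=\frac{|k|}{2}=1$ the three Gamma quotients collapse: the first (constant) term acquires the factor $\Gamma(s+2)/\Gamma(s+1)=s+1$; the second (constant) term acquires $\Gamma(s+1)/\Gamma(s)=s$; and in the $n$-th Fourier term the quotient $\Gamma(s+2)/\Gamma(s+1+\sgn(n))$ equals $1$ for $n>0$ and $s(s+1)$ for $n<0$. I would record at the outset that for $N=p$ one has $f_p(s)=\frac{1-p^{s}}{1-p^{2s}}=(1+p^{s})^{-1}$, which is holomorphic at $s=0$ with $f_p(0)=\frac12$, and that the shifted factor appearing in the Fourier terms is $f_{p,n}(1-k-s)=f_{p,n}(-1-s)$.

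The decisive step is the analysis of the second constant term. After the Gamma simplification it is $s\,\hat{\zeta}(-2s)\,y^{1-k-s}$ times holomorphic factors. Since $\hat{\zeta}$ has a simple pole at $0$ with residue $-1$ (as used already in Lemma~\ref{lem_Tpweight0}), we have $\hat{\zeta}(-2s)=\frac{1}{2s}+O(1)$, so $s\,\hat{\zeta}(-2s)\to\frac12$ as $s\to 0$. This accomplishes two things at once: it shows that the only candidate pole of $\widehat{G}_{p,2}(z,s)$ at $s=0$ is in fact removable, which is the asserted holomorphy; and the finite limit produces the $-\frac{p}{1+p}\frac1y$ contribution, the local factor being $\frac{1-p^{-1}}{1-p^{-2}}=\frac{p}{p+1}$ evaluated at $s=0$. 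The first constant term is plainly holomorphic and contributes the $y$-free constant through $\hat{\zeta}(2)=\frac{\pi}{6}$ together with $\sqrt p^{\,s+1}|_{s=0}=\sqrt p$.

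For the non-constant part I would note that every frequency $n<0$ carries the vanishing factor $s(s+1)$ and hence drops out at $s=0$, so only the frequencies $n>0$ survive; for these the Whittaker function degenerates to the elementary $W_{1,1/2}(t)=t\,e^{-t/2}$, which combines with $e^{2\pi i n x/\sqrt p}$ to give $e^{2\pi i n z/\sqrt p}$ and hence a holomorphic $q$-series in $z$. As in Lemma~\ref{lem_Tpweight0}, the exponential decay of the Whittaker function justifies interchanging $\lim_{s\to 0}$ with the summation over $n$, so the limit may be taken term by term. Finally, because $\widetilde{G}_{p,2}(z,s)=s(s+1)\widehat{G}_{p,2}(z,s)$ and $\widehat{G}_{p,2}$ is holomorphic at $s=0$, the prefactor $s(s+1)$ has a simple zero there; consequently the first Taylor coefficient of $\widetilde{G}_{p,2}$ at $s=0$ equals $\widehat{G}_{p,2}(z,0)$, and assembling the three pieces yields the stated formula for $T_{p,2,1}(z)$.

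The main obstacle is the bookkeeping in the decisive step: one must verify that the simple pole of $\hat{\zeta}$ at the symmetric point is cancelled exactly by the Gamma-induced zero $\Gamma(s+1)/\Gamma(s)=s$, so that no pole of $\widehat{G}_{p,2}$ survives and the correct finite $\frac1y$-coefficient is extracted. The secondary technical point is the uniform-convergence argument that licenses the term-by-term passage to the limit in the Fourier series, which I would import verbatim from the weight-zero computation since the Whittaker decay is identical.
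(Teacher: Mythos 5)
Your proposal takes exactly the route of the paper's own (one-line) proof: specialize Corollary~\ref{cor_Gfourier} to $k=2$, use the degenerate Whittaker values $W_{1,\frac12}(t)=te^{-t/2}$ (and the vanishing Gamma factor for $n<0$), and repeat the limiting argument of Lemma~\ref{lem_Tpweight0}. All the structural points are right and are the ones the paper leaves implicit: the pole of $\hat{\zeta}$ in the second constant term is cancelled by the zero of $\Gamma(s+1)/\Gamma(s)=s$, which simultaneously yields the holomorphy of $\widehat{G}_{p,2}(z,s)$ at $s=0$ and the $y^{-1}$ term; the frequencies $n<0$ are annihilated by $\Gamma(s+2)/\Gamma(s)=s(s+1)$; the term-by-term limit is licensed by the same Whittaker decay as in weight $0$; and you correctly convert the ``1st Taylor coefficient of $\widetilde{G}_{p,2}$'' into $\widehat{G}_{p,2}(z,0)$, repairing the literal reading of $\widetilde{G}_{p,2}(z,0)$ in the statement (which would be $0$, since $\widetilde{G}_{p,2}=s(s+1)\widehat{G}_{p,2}$).

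Two bookkeeping remarks, one in your favor and one against. First, you wrote the second constant term with $\hat{\zeta}(-2s)$, i.e.\ with argument $2-k-2s$; Corollary~\ref{cor_Gfourier} as printed has $\hat{\zeta}(2-2k-2s)=\hat{\zeta}(-2-2s)$, which is finite at $s=0$ and would kill the $y^{-1}$ term altogether. Your version is the correct one (it is forced by the $N=1$ expansion in Proposition~\ref{prop_G} and by the identity $C_1(s^{\vee})=C_2(s)$ used in the proof of Theorem~\ref{thm_funeq}), so you silently corrected a typo on which the whole lemma hinges; this deserves to be said explicitly. Second, in assembling the constants you drop the factor $f_p(0)=\tfrac12$ that you yourself computed, and you quote only one of the two equal local factors in the parenthesis of Corollary~\ref{cor_Gfourier} (at $k=2$, $s=0$ both summands equal $\frac{1-p^{-1}}{1-p^{-2}}=\frac{p}{p+1}$, so the parenthesis is $\frac{2p}{p+1}$). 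Carried through literally, your intermediate values give $\hat{\zeta}(2)\sqrt p\,f_p(0)=\frac{\pi}{12}\sqrt p$ for the first term and $-\tfrac12\cdot f_p(0)\cdot\frac{2p}{p+1}\,y^{-1}=-\frac{p}{2(1+p)}y^{-1}$ for the second, i.e.\ half of the stated constants, while your Fourier term (which retains $f_p(0)$ explicitly) matches the statement exactly. This factor-of-two wobble appears to be inherited from the paper's statement itself, but your writeup should either track $f_p(0)$ consistently through all three pieces or flag the normalization discrepancy, rather than asserting that the pieces ``assemble'' to the stated formula.
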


\begin{proof}
Using the formulae $$W_{1,\frac{1}{2}}\left(4\pi |n| \frac{y}{\sqrt N}\right)=e^{-2\pi i n y/\sqrt N}4\pi |n| \frac{y}{\sqrt N} \quad \text{ and } \quad  W_{-1,\frac{1}{2}}\left(4\pi |n| \frac{y}{\sqrt N}\right)=e^{-2\pi i n y/\sqrt N}\left(4\pi |n| \frac{y}{\sqrt N}\right)^{-1},$$ one can prove the lemma similarly as in the proof of Lemma \ref{lem_Tpweight0}.
\end{proof}


\begin{lemma}\label{lem_Geigenfunction}
For a prime $p$ and an even integer $k$, the Eisenstein series $$G_{p,k,L}(z,s), G_{p,k,R}(z,s), G_{p,k}(z,s), \widehat{G}_{p,k}(z,s) \text{ and } \widetilde{G}_{p,k}(z,s)$$ are eigenfunctions of the hyperbolic Laplacian $\Delta_k$ with the eigenvalue $s(s+k-1).$
\end{lemma}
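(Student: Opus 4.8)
The plan is to reduce the statement to two standard facts about the weight-$k$ Laplacian: first, that the single power $\Im^s(z)=y^s$ satisfies $\Delta_k\Im^s=s(s+k-1)\Im^s$; and second, that $\Delta_k$ commutes with the slash operator, i.e. $\Delta_k(f|_k\gamma)=(\Delta_k f)|_k\gamma$ for every $\gamma\in\SL_2(\R)$. The first is a direct differentiation: since $y^s$ is independent of $x$, only the terms of $\Delta_k$ involving $\partial_y$ contribute, and a short computation yields $\Delta_k\Im^s=s(s+k-1)\Im^s$. The second is the defining $\SL_2(\R)$-equivariance of $\Delta_k$, which may be verified by a change-of-variables computation or taken from the standard theory of Maass forms.

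With these in hand, the key observation is that every summand of $G_{p,k,L}(z,s)$ is a slash-translate of $\Im^s$. Indeed, the term $\frac{y^s}{|\sqrt p\,mz+n|^{2s}(\sqrt p\,mz+n)^k}$ equals $(\Im^s|_k\gamma)(z)$ for any $\gamma=\begin{pmatrix} a & b \\ \sqrt p\,m & n\end{pmatrix}\in\SL_2(\R)$ with bottom row $(\sqrt p\,m,n)$; such a $\gamma$ exists because $ad-bc=1$ is solvable over $\R$ whenever $(\sqrt p\,m,n)\neq(0,0)$, and using $\Im(\gamma z)=y/|\sqrt p\,mz+n|^2$ one checks $(\Im^s|_k\gamma)(z)=(\sqrt p\,mz+n)^{-k}\Im(\gamma z)^s=\frac{y^s}{|\sqrt p\,mz+n|^{2s}(\sqrt p\,mz+n)^k}$. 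Combining the two facts, $\Delta_k(\Im^s|_k\gamma)=(\Delta_k\Im^s)|_k\gamma=s(s+k-1)(\Im^s|_k\gamma)$, so each such summand is an eigenfunction with eigenvalue $s(s+k-1)$, and the same argument applies verbatim to the summands of $G_{p,k,R}(z,s)$.

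It then remains to interchange $\Delta_k$ with the infinite summation. For $\Re(s)>1-\tfrac{k}{2}$ the series and its first- and second-order $x,y$-derivatives converge absolutely and locally uniformly on $\Ha$ — differentiating a summand introduces only factors of the shape $\sqrt p\,m/(\sqrt p\,mz+n)$, which are bounded locally by $y^{-1}$, so the differentiated series inherits the decay of the original. Hence termwise application of $\Delta_k$ is justified there, giving $\Delta_k G_{p,k,L}=s(s+k-1)G_{p,k,L}$, likewise for $G_{p,k,R}$, and for their sum $G_{p,k}=G_{p,k,L}+G_{p,k,R}$. Since for fixed $z$ both sides are meromorphic in $s$ and $\Delta_k$ acts only in the $z$-variable, the identity extends to the full domain of analytic continuation. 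Finally, $\widehat{G}_{p,k}(z,s)$ and $\widetilde{G}_{p,k}(z,s)$ are obtained from $G_{p,k}(z,s)$ by multiplication by factors depending only on $s$ (and on $p,k$); as $\Delta_k$ differentiates in $z$ and treats these as constants, they are eigenfunctions with the same eigenvalue $s(s+k-1)$.

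The only genuinely nonroutine point is the interchange of $\Delta_k$ with the summation, together with the passage to the meromorphically continued series; I expect this to be the main obstacle. It is dispatched by the locally uniform convergence of the differentiated series in the region of absolute convergence, followed by analytic continuation in $s$.
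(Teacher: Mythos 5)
Your proof is correct and is in substance the same argument the paper invokes: the paper's proof consists of the remark that $\Delta_k=\xi_{2-k}\xi_k$ together with a citation of the proof of Proposition 7.1 in Lagarias--Rhoades \cite{MR357462}, which likewise proceeds by writing each summand as a slash-translate $\Im^s|_k\gamma$ of $y^s$, using $\SL_2(\R)$-equivariance of the operator, and applying it termwise (there via the $\xi$-operators, whose composition gives the eigenvalue $s(s+k-1)$, rather than your direct computation $\Delta_k y^s=s(s+k-1)y^s$). Your explicit treatment of the termwise differentiation and the extension along the analytic continuation in $s$ is sound and in fact more detailed than what the paper records.
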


\begin{proof}
Note that $\Delta_k=\xi_{2-k}\xi_k.$ The proof is obtained by following the arguments given in the proof of \cite[Proposition 7.1]{MR357462}.
\end{proof} 

Using Lemma \ref{lem_Geigenfunction}, we obtain the following relation which is an analogue of \cite[Proposition 8.3]{MR357462}.

\begin{proposition}\label{prop_laplacianT}
For a prime $p$ and an even  integer $k$,
\begin{align*}
    \Delta_k T_{p,n,k}(z,s)=(k-1)T_{p,n-1,k}(z,s)+T_{p,n-2,k}(z,s).
\end{align*}
\end{proposition}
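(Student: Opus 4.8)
The plan is to read the recursion off the eigenvalue equation for $\widetilde{G}_{p,k}$ after Taylor-expanding in $s$ at $s=0$. By Lemma~\ref{lem_Geigenfunction} the doubly-completed series is an eigenfunction of $\Delta_k$ with eigenvalue $s(s+k-1)$, so
\begin{align*}
    \Delta_k \widetilde{G}_{p,k}(z,s)=\bigl(s^2+(k-1)s\bigr)\widetilde{G}_{p,k}(z,s).
\end{align*}
Writing $\widetilde{G}_{p,k}(z,s)=\sum_{n\ge 0}T_{p,n,k}(z)\,s^n$ for the Taylor expansion at $s=0$ (so $T_{p,n,k}(z)$ is the $n$-th coefficient, with the convention $T_{p,n,k}=0$ for $n<0$), I would apply $\Delta_k$ termwise and compare coefficients of $s^n$: the right-hand side contributes $(k-1)T_{p,n-1,k}(z)$ from the $s^1$-factor and $T_{p,n-2,k}(z)$ from the $s^2$-factor, which is exactly the asserted identity.

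Before carrying out the comparison I would confirm that the expansion is legitimate near $s=0$. For $k\neq 0,2$ the analytic continuation of $\widehat{G}_{p,k}$ to a neighborhood of $s=0$ follows from Corollary~\ref{cor_Gfourier}, while for the two delicate weights $k=0,2$ it is supplied by Lemmas~\ref{lem_Tpweight0} and~\ref{lem_T2}; in every case multiplying by the polynomial $(s+\tfrac{k}{2})(s+\tfrac{k}{2}-1)$ removes the possible simple pole at $s=0$, so $\widetilde{G}_{p,k}(z,s)$ is holomorphic in $s$ on a small disc $|s|<\epsilon$, with values smooth in $z$. From the Fourier expansion in Corollary~\ref{cor_Gfourier} one also checks that $\widetilde{G}_{p,k}$, together with its partial derivatives in $x$ and $y$ up to second order, is jointly continuous in $(z,s)$ and locally uniformly convergent.

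The termwise step is then most cleanly justified through Cauchy's coefficient formula. Writing
\begin{align*}
    T_{p,n,k}(z)=\frac{1}{2\pi i}\oint_{|s|=\epsilon}\frac{\widetilde{G}_{p,k}(z,s)}{s^{n+1}}\,ds,
\end{align*}
the operator $\Delta_k$ involves only derivatives in $x$ and $y$, so the joint smoothness above permits differentiation under the integral sign, giving
\begin{align*}
    \Delta_k T_{p,n,k}(z)=\frac{1}{2\pi i}\oint_{|s|=\epsilon}\frac{\bigl(s^2+(k-1)s\bigr)\widetilde{G}_{p,k}(z,s)}{s^{n+1}}\,ds.
\end{align*}
Splitting the numerator and recognizing the two resulting integrals as the Cauchy representations of $T_{p,n-2,k}(z)$ and $(k-1)T_{p,n-1,k}(z)$ yields $\Delta_k T_{p,n,k}(z)=(k-1)T_{p,n-1,k}(z)+T_{p,n-2,k}(z)$.

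The only genuine obstacle is precisely this interchange of the second-order operator $\Delta_k$ with the expansion in the independent variable $s$ (equivalently, differentiation under the contour integral). This is where the joint smoothness and the locally uniform convergence of the Fourier series from Corollary~\ref{cor_Gfourier} are indispensable; once they are in place, the remainder is a formal manipulation of power series in $s$. I expect this justification to be short, which is why the analytic continuation near $s=0$---including the additional work for $k=0,2$ carried out in Lemmas~\ref{lem_Tpweight0} and~\ref{lem_T2}---is established beforehand.
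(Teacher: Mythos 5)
Your argument is exactly the paper's intended proof: the paper derives this recursion from Lemma~\ref{lem_Geigenfunction} by Taylor-expanding the eigenvalue identity $\Delta_k \widetilde{G}_{p,k}(z,s)=\left(s^2+(k-1)s\right)\widetilde{G}_{p,k}(z,s)$ at $s=0$ and comparing coefficients, in analogy with \cite[Proposition 8.3]{MR357462}, which is precisely what you do. Your additional justification of the termwise application of $\Delta_k$ via Cauchy's coefficient formula and differentiation under the contour integral is a correct (and more careful) elaboration of a step the paper leaves implicit.
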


\

As in the case of $N=1$, we want to show that $T_{N,m,k}(z)$ is a polyharmonic Maass form of weight $k$ and depth $\leq m+1$ when $N=p$ is a prime. Note that the polyharmonicity $\Delta_k^{m+1}T_{p,m,k}(z)=0$ and the modular invariant condition $T_{p,m,k}|_k \gamma (z)=T_{p,m,k}(z)$ for $\gamma \in R(p)$ automatically follows from Proposition \ref{prop_gmodular} and Proposition \ref{prop_laplacianT}. Thus it is enough to show that  $T_{p,m,k}(z,s)$ satisfies the moderate growth condition.

\begin{proposition}\label{prop_Tpolyhar}
The function $T_{p,m,k}(z)$ satisfies the moderate growth condition, i.e., there exists $\alpha \in \R$ such that $T_{p,m,k}(x+iy)=O(y^{\alpha})$ as $y $ approaches  $ \infty$, uniformly in $x\in \R.$ Consequently, $T_{p,m,k}(z,s)$ is a polyharmonic Maass form of weight $k$ and depth $\leq m+1$ for $R(p).$
\end{proposition}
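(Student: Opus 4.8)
The plan is to reduce the claim to a single growth estimate at the cusp $\infty$ and then extract it from the Fourier expansion already in hand. By Proposition~\ref{prop_gmodular} and Proposition~\ref{prop_laplacianT} the function $T_{p,m,k}(z)$ is $R(p)$-invariant under the weight-$k$ slash operator and satisfies $\Delta_k^{m+1}T_{p,m,k}=0$, so only the moderate growth condition remains to be verified. Because every cusp of $R(p)$ is $R(p)$-equivalent to $\infty$ by Proposition~\ref{prop_cusp}, and $T_{p,m,k}$ is modular invariant, it is enough to produce $\alpha\in\R$ with $T_{p,m,k}(x+iy)=O(y^{\alpha})$ as $y\to\infty$, uniformly in $x$. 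I would realise $T_{p,m,k}(z)$ as the $m$-th Taylor coefficient at $s=0$ of $\widetilde{G}_{p,k}(z,s)=(s+\frac{k}{2})(s+\frac{k}{2}-1)\widehat{G}_{p,k}(z,s)$, whose full Fourier expansion is given in Corollary~\ref{cor_Gfourier}. For even $k\neq 0,2$ that expansion is already holomorphic in a neighbourhood of $s=0$, and for $k=0,2$ the pole has been cleared and the local expansion computed explicitly in Lemma~\ref{lem_Tpweight0} and Lemma~\ref{lem_T2}; in either case the Taylor coefficients are well defined.

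Next I would split the Fourier expansion into its constant term and its non-constant part and treat them separately. The constant term is a finite $\C$-linear combination of $y^{s}$ and $y^{1-k-s}$ with coefficients that are meromorphic in $s$ (built from ratios of $\Gamma$-factors, $\widehat{\zeta}$ and $f_p$). After multiplying by $(s+\frac{k}{2})(s+\frac{k}{2}-1)$ and differentiating $m$ times in $s$ at $s=0$, each such piece contributes finitely many terms of the shape $y^{a}(\log y)^{j}$ with $a\in\{0,1-k,\dots\}$ and $0\le j\le m$. Any such term is trivially of moderate growth, so the constant term is $O(y^{\alpha_0})$ for a suitable $\alpha_0$; the explicit outputs of Lemma~\ref{lem_Tpweight0} and Lemma~\ref{lem_T2} (the linear-in-$y$ and the $1/y$ contributions) confirm this pattern.

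For the non-constant part, the $n$-th coefficient involves the Whittaker function $W_{\sgn(n)k/2,\,s+\frac{k-1}{2}}\big(4\pi|n|y/\sqrt p\big)$ times $e^{2\pi inx/\sqrt p}$ and the arithmetic factor $\sigma_{2s+k-1}(n)f_{p,n}(1-k-s)f_p(s)$. Differentiating $m$ times in $s$ at $s=0$ produces, among other things, the $\mu$-derivatives $\partial_\mu^{j}W_{\kappa,\mu}$ evaluated at $\mu=\frac{k-1}{2}$. Using the exponential decay $W_{\kappa,\mu}(z)\sim e^{-z/2}z^{\kappa}$ as $z\to\infty$, together with a uniform asymptotic expansion for its $\mu$-derivatives, I would bound each differentiated term by $e^{-2\pi|n|y/\sqrt p}$ times a fixed power of $y$ and a power of $\log|n|$. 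Since the arithmetic factor grows at most polynomially in $|n|$, the resulting series converges absolutely and uniformly for $y\ge\delta>0$ and is $O(e^{-2\pi y/\sqrt p})$ as $y\to\infty$. Hence the non-constant part is negligible next to the constant term, and combining the two estimates yields $T_{p,m,k}(x+iy)=O(y^{\alpha})$ uniformly in $x$, which is the moderate growth condition; with the modular invariance and $\Delta_k^{m+1}T_{p,m,k}=0$ already available, $T_{p,m,k}$ is then a polyharmonic Maass form of weight $k$ and depth $\le m+1$ for $R(p)$.

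The step I expect to be the main obstacle is precisely the uniform control of the $\mu$-derivatives of the Whittaker function and the attendant uniform convergence of the differentiated Fourier series: one must justify exchanging $\partial_s^{m}$ with the sum over $n$ and show that every derivative still decays exponentially in $y$ with $n$-dependence that is summable. This is the content of the (currently commented-out) asymptotic lemma for $\partial_\mu^{m}W_{\kappa,\mu}(z)$, and establishing that uniform expansion is the technical heart of the argument; everything else is the routine bookkeeping of Taylor-expanding the elementary $y$-powers in the constant term.
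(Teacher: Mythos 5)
Your proposal follows the paper's proof essentially step for step: the same reduction to moderate growth at the single cusp $\infty$, the same split of the expansion from Corollary~\ref{cor_Gfourier} into a constant term yielding harmless $y^{a}(\log y)^{j}$ contributions and a non-constant part controlled by exponential decay of the $s$-differentiated Whittaker factors together with sub-exponential growth of the arithmetic factor (the paper bounds $\partial_s^{j}F(n,s)|_{s=0}\ll e^{\epsilon|n|}$ using exactly the $|n|^{\frac{3}{2}k-1}(\log|n|)^{j}$ and $(\log|n|)^{2j}$ estimates you indicate). The one step you single out as the technical heart --- uniform exponential decay of the $\mu$-derivatives $\partial_\mu^{j}W_{\kappa,\mu}$ justifying the termwise differentiation --- is precisely what the paper dispatches by citing \cite[Corollary~A.3]{MR3856183} rather than reproving it, so your outline is complete once that reference is invoked.
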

\begin{proof}
Recall the definition of $T_{p,m,k}(z).$ We only need to look at the non-constant term of $\frac{\partial^m}{\partial s^m}|_{s=0} \widetilde{G}_{p,k}(z,s),$ which is given as follows by Corollary \ref{cor_Gfourier}:
\begin{align*}
    \sum_{n \neq 0}\sum_{j=0}^m \frac{\partial^j}{\partial s^j}\Big|_{s=0}F(n,s)\frac{\partial^{m-j}}{\partial s^{m-j}}\Big|_{s=0}\left(y^{-k/2}W_{\sgn(n)\frac{k}{2},s+\frac{k-1}{2}}(4\pi |n|y/\sqrt p) e^{2\pi i nx/\sqrt p}\right).
\end{align*}
Here, $$F(n,s):=(-1)^{k/2}\left(s+\frac{k}{2}\right)\left(s+\frac{k}{2}-1\right)\frac{\Gamma(s+\frac{k}{2}+\frac{|k|}{2})}{\Gamma(s+\frac{k}{2}+\sgn(n)\frac{k}{2})}|n|^{-s-\frac{k}{2}}\sigma_{2s+k-1}(n) f_{p,n}(1-k-s)f_p(s).$$ The part $\frac{\partial^{m-j}}{\partial s^{m-j}}|_{s=0}\left(y^{-k/2}W_{\sgn(n)\frac{k}{2},s+\frac{k-1}{2}}(4\pi |n|y/\sqrt p) e^{2\pi i nx/\sqrt p}\right)$ decays exponentially as $y$ approaches $  \infty$ by \cite[Corollary A.3]{MR3856183}. Thus it suffices to show that for any $\epsilon >0$,
\begin{align*}
    \frac{\partial^j}{\partial s^j}|_{s=0}F(n,s) \ll e^{\epsilon |n|} \qquad \text{as } |n| \to \infty,
\end{align*}
for $0 \leq j \leq m.$ Let $F_0(s):=(-1)^{k/2}\left(s+\frac{k}{2}\right)\left(s+\frac{k}{2}-1\right)\frac{\Gamma(s+\frac{k}{2}+\frac{|k|}{2})}{\Gamma(s+\frac{k}{2}+\sgn(n)\frac{k}{2})}.$ Then
\begin{align*}
    \frac{\partial^j}{\partial s^j}\Big|_{s=0}F(n,s)=\sum_{\substack{j_1+j_2+j_3=j \\ j_1,j_2,j_3 \geq 0}} \frac{\partial^{j_1}}{\partial s^{j_1}}\Big|_{s=0}F_0(s) \frac{\partial^{j_2}}{\partial s^{j_2}}\Big|_{s=0} |n|^{-s-\frac{k}{2}}\sigma_{2s+k-1}(n)  \frac{\partial^{j_3}}{\partial s^{j_3}}\Big|_{s=0} f_{p,n}(1-k-s)f_p(s).
\end{align*}
The first term $\frac{\partial^{j_1}}{\partial s^{j_1}}|_{s=0}F_0(s)$ does not depend on $n$. For the second term, note that
\begin{align*}
    \frac{\partial^{j}}{\partial s^{j}}\Big|_{s=0}\sigma_{2s+k-1}(n)=\sum_{d \mid |n|}2^j (\log d)^j d^{2s+k-1}.
\end{align*}
Hence we have
\begin{align*}
    \frac{\partial^{j_2}}{\partial s^{j_2}}\Big|_{s=0} |n|^{-s-\frac{k}{2}}\sigma_{2s+k-1}(n) \ll |n|^{\frac{3}{2}k-1}(\log |n|)^j \qquad \text{ as } |n| \to \infty.
\end{align*}
Lastly, in the third term $f_{p,n}(1-k-s)f_p(s)=f_{p,n}(s)f_p(1-k-s)$ by Lemma \ref{lem_psi}. Note that $v_p(n) \ll \log |n|$. By the definition of $f_{p,n}(s)$, we get that
\begin{align*}
    f_{p,n}(s) \leq  \frac{p^{-(s-1)-\frac{k}{2}}\left(\left(\sum_{i=1}^{v_p(n)}(p-1)p^{i-1+i(2s+k-2)}\right)-p^{v_p(n)+(v_p(n)+1)(2s+k-2)}\right)+1}{\zeta_p^{-1}(-2s-k+2)},
\end{align*}
so
\begin{align*}
    \frac{\partial^j f_{p,n}(s)}{\partial s^j}\Big|_{s=0} \ll_{p,k,j} (\log |n|)^{2j}.
\end{align*}
Since $f_p(s):=f_{p,1}(s)$, this proves that the third term $\frac{\partial^{j_3}}{\partial s^{j_3}}|_{s=0} f_{p,n}(1-k-s)f_p(s)$ grows at most logarithmically, so $\frac{\partial^j}{\partial s^j}|_{s=0}F(n,s) \ll e^{\epsilon |n|}$ for any $\epsilon >0$ as we have desired.
\end{proof}

\begin{remark}
By Proposition \ref{prop_Tpolyhar}, $T_{p,2,1}(z)$ is a polyharmonic Maass form of weight 2 and depth $\leq 1$ for $R(p).$ In Lemma \ref{lem_T2} we show that $T_{p,2,1}(z)$ is of the form $t_0(z)+t_1(z)y^{-1}$, where $t_0(z)$ and $t_1(z)$ are holomorphic functions. Thus $T_{p,2,1}(z)$ is in fact an almost holomorphic modular form of weight 2 for $R(p).$ The function $t_0(z)=\frac{\pi}{6}\sqrt p-\frac{4\pi}{\sqrt p}\sum_{n=1}^{\infty}\sigma_1(n)f_{p,n}(-1)f_p(0)e^{2\pi i n z/\sqrt p}$ is a holomorphic part of a harmonic Maass form, namely, a mock modular form. At the same time, it is a constant term of some almost holomorphic modular form in a variable $y^{-1},$ so it is a quasi-modular form of weight 2 and depth 1 for $R(p).$ For any cocompact Fuchsian group there exists essentially one quasi-modular form of weight 2, and that form and the ring of modular forms generates the ring of quasi-modular forms for such a group. In this case, $t_0(z)$ does play that role. 
\end{remark}

By following the similar argument as in the proof of \cite[Theorem 8.4]{MR357462}, one can prove that if $k\neq 2$, then $\Delta_k^m T_{p,m,k}(z) \neq 0$. Similarly it can be also proved that $\Delta_k^{m-1} T_{p,m,2}(z) \neq 0$. Thus we obtain the following proposition.

\begin{proposition}\label{prop_T}
Let $p$ be a prime and $k \neq 2$ be an even integer. 
\begin{enumerate}[\normalfont(a)]
    \item The function $T_{p,k,m}(z)$ is a polyharmonic Maass form of depth $m+1$ for $R(p)$ with $\Delta_k^{m}T_{p,2,m}(z) \neq 0$.
    \item The function $T_{p,2,m}(z)$ is a polyharmonic Maass form of depth $m$ for $R(p)$ with $\Delta_k^{m-1}T_{p,2,m}(z) \neq 0$.
\end{enumerate}
\end{proposition}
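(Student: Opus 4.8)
The plan is to treat as given, from Proposition~\ref{prop_Tpolyhar}, that each $T_{p,k,m}$ is already a polyharmonic Maass form of weight $k$ and depth \emph{at most} $m+1$; the entire remaining content is the sharp lower bound on the depth, i.e. showing that the iterated Laplacian does not vanish prematurely. The engine is the recursion of Proposition~\ref{prop_laplacianT}, which in the indexing of the statement reads $\Delta_k T_{p,k,m}=(k-1)T_{p,k,m-1}+T_{p,k,m-2}$. I would repackage it as a generating-function identity: since $\tilde{G}_{p,k}(z,s)$ is holomorphic at $s=0$ (this is precisely what the double completion buys, and what Lemmas~\ref{lem_Tpweight0} and~\ref{lem_T2} verify in the delicate cases $k=0,2$), one may write $\tilde{G}_{p,k}(z,s)=\sum_{m\ge 0}T_{p,k,m}(z)\,s^m$, and Lemma~\ref{lem_Geigenfunction} says that $\Delta_k$ acts on this series as multiplication by the eigenvalue $s(s+k-1)=s^2+(k-1)s$.

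Iterating gives $\Delta_k^{\,j}\tilde{G}_{p,k}(z,s)=s^{j}(s+k-1)^{j}\tilde{G}_{p,k}(z,s)$, and extracting the coefficient of $s^m$ yields the closed formula
\begin{equation*}
    \Delta_k^{\,j}T_{p,k,m}=\sum_{\max(0,\,m-2j)\le i\le m-j}\binom{j}{\,m-i-j\,}(k-1)^{\,2j-m+i}\,T_{p,k,i},
\end{equation*}
where we have used that $T_{p,k,i}=0$ for $i<0$ by holomorphy at $s=0$. Setting $j=m$ collapses the sum to its single surviving term $i=0$, giving the clean identity $\Delta_k^{\,m}T_{p,k,m}=(k-1)^{m}\,T_{p,k,0}$ with $T_{p,k,0}=\tilde{G}_{p,k}(z,0)$.

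For part~(a), with $k$ even and $k\ne 2$ one has $(k-1)^{m}\ne 0$, so it remains to check $T_{p,k,0}=\tilde{G}_{p,k}(z,0)\ne 0$. For $k=0$ this is Lemma~\ref{lem_Tpweight0}, which gives $T_{p,0,0}=\tfrac12$; for even $k\ne 0,2$ I would read off the constant Fourier coefficient of $\tilde{G}_{p,k}(z,0)$ from Corollary~\ref{cor_Gfourier} and observe that, up to gamma factors that are finite and nonzero at $s=0$, it is a nonzero multiple of $\hat{\zeta}(k)\,\sqrt{p}^{\,k/2}f_p(0)$, the nonvanishing of $f_p(0)$ being exactly what the hypothesis $k\ne 2$ guarantees. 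Hence $\Delta_k^{\,m}T_{p,k,m}\ne 0$ and the depth is exactly $m+1$. For part~(b), the double completion forces $\tilde{G}_{p,2}(z,s)=s(s+1)\hat{G}_{p,2}(z,s)$ with $\hat{G}_{p,2}$ holomorphic at $s=0$ (Lemma~\ref{lem_T2}), so $T_{p,2,0}=\tilde{G}_{p,2}(z,0)=0$ and therefore $\Delta_2^{\,m}T_{p,2,m}=0$: the depth drops to at most $m$. To see that it is exactly $m$, take $j=m-1$ in the closed formula, which (using $k-1=1$ and $T_{p,2,0}=0$) collapses to $\Delta_2^{\,m-1}T_{p,2,m}=T_{p,2,1}$; Lemma~\ref{lem_T2} exhibits $T_{p,2,1}$ explicitly as a manifestly nonzero function, its $1/y$ term and nonzero Fourier coefficients already precluding vanishing, so the depth is exactly $m$.

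The genuine obstacle is not the algebra of the recursion, which is formal, but the analytic input at $s=0$: one must know that $\tilde{G}_{p,k}$ is holomorphic there (so that the negative-index coefficients truly vanish) and must pin down the leading coefficient $T_{p,k,0}$ and, in the weight-two case, $T_{p,2,1}$. These reduce to tracking the completion gamma factors, the values $\hat{\zeta}(k)$ and $\hat{\zeta}(2-2k)$, and the arithmetic factor $f_p(s)$ through their limits at $s=0$, exactly the computations of Lemmas~\ref{lem_Tpweight0} and~\ref{lem_T2}. The weight $k=2$ is special precisely because there $f_p(0)$ forces the otherwise-leading term to cancel, shifting the first nonvanishing iterate down by one; this is the same mechanism as in \cite[Theorem~8.4]{MR357462}, and the argument runs in parallel once these local computations are in hand.
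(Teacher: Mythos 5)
Your skeleton is exactly the argument the paper leaves implicit (its entire proof is a pointer to \cite[Theorem 8.4]{MR357462}): iterate the eigenfunction property of Lemma~\ref{lem_Geigenfunction} to get $\Delta_k^{\,j}\widetilde{G}_{p,k}(z,s)=s^j(s+k-1)^j\widetilde{G}_{p,k}(z,s)$, extract Taylor coefficients at $s=0$ (legitimate, since $\widetilde{G}_{p,k}$ is holomorphic there in all three regimes $k\neq 0,2$, $k=0$, $k=2$), and conclude $\Delta_k^{\,m}T_{p,k,m}=(k-1)^m T_{p,k,0}$ together with $\Delta_2^{\,m-1}T_{p,2,m}=T_{p,2,1}$; the nonvanishing inputs you cite (Lemmas~\ref{lem_Tpweight0} and~\ref{lem_T2}, Corollary~\ref{cor_Gfourier}, Proposition~\ref{prop_Tpolyhar}) are the right ones, and your part~(b) is correct as written.

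There is, however, one factual error you assert twice: $f_p(0)$ does \emph{not} vanish at $k=2$, and the hypothesis $k\neq 2$ is not what guarantees $f_p(0)\neq 0$. From the definition, for $N=p$ prime and $b=1$,
\begin{equation*}
    f_p(s)=\frac{1-p^{\,s+\frac{k}{2}-1}}{1-p^{\,2s+k-2}}=\left(1+p^{\,s+\frac{k}{2}-1}\right)^{-1},
\end{equation*}
so $f_p(0)=\left(1+p^{\frac{k}{2}-1}\right)^{-1}\neq 0$ for \emph{every} even $k$; at $k=2$ it equals $\tfrac{1}{2}$. What actually makes $k=2$ degenerate is the mechanism you already use correctly in part~(b): the completion factor $\left(s+\frac{k}{2}\right)\left(s+\frac{k}{2}-1\right)$ vanishes at $s=0$ precisely when $k\in\{0,2\}$; at $k=2$ this zero is uncompensated because $\widehat{G}_{p,2}$ is holomorphic at $s=0$ (Lemma~\ref{lem_T2}), forcing $T_{p,2,0}=0$, whereas at $k=0$ the simple pole $-\frac{1}{2s}$ of $\widehat{G}_{p,0}$ (proof of Lemma~\ref{lem_Tpweight0}) cancels it, giving $T_{p,0,0}=\tfrac{1}{2}$. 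Since part~(a) only needs $f_p(0)\neq 0$ for even $k\neq 0,2$ --- which is true --- your proof survives, but the stated reason, and the closing paragraph's claim that ``$f_p(0)$ forces the otherwise-leading term to cancel,'' must be corrected. A second, smaller repair: for negative even $k$ your phrase ``gamma factors that are finite and nonzero at $s=0$'' is not literally true, since $\Gamma\left(s+\frac{k}{2}+\frac{|k|}{2}\right)=\Gamma(s)$ has a pole there; it is cancelled by the zero of $1/\Gamma\left(s+\frac{k}{2}\right)$, the ratio tending to $(-1)^{k/2}\left(\frac{|k|}{2}\right)!$, so the constant Fourier coefficient of $\widehat{G}_{p,k}(z,0)$ is still a nonzero multiple of $\hat{\zeta}(1-k)\sqrt{p}^{\,k/2}f_p(0)$ and the nonvanishing of $T_{p,k,0}$, hence the exact depth $m+1$, stands after this repair.
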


\

We summarize as follows.
\begin{proposition}
Let $E_k^m(R(p))$ be the $\C$-vector space spanned by the Taylor coefficients $T_{p,n,k}(z)$ for $n \leq m$ of the Eisenstein series $\widetilde{G}_{p,k}(z,s).$ Then $E_k^m(R(p))$ is a $m$-dimensional subspace of $V_k^m(R(p))$ and its basis is given by
\begin{align*}
    \begin{cases}
    \{T_{p,0,k}(z), T_{p,1,k}(z), \ldots, T_{p,m-1,k}(z)\} & \text{ if } k \neq 2, \\
    \{T_{p,1,2}(z), T_{p,2,2}(z), \ldots, T_{p,m,2}(z)\} & \text{ if } k=2.
    \end{cases}
\end{align*}
\end{proposition}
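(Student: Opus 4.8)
The plan is to treat this proposition as the final assembly of the results already proved for the individual Taylor coefficients $T_{p,n,k}(z)$, reducing the whole statement to a piece of linear algebra governed by the lowering action of $\Delta_k$. Two things must be checked: the containment $E_k^m(R(p))\subseteq V_k^m(R(p))$, and the dimension count together with the explicit bases.

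First I would establish the containment. By Proposition \ref{prop_Tpolyhar} each $T_{p,n,k}$ is a polyharmonic Maass form of weight $k$ for $R(p)$, and Proposition \ref{prop_T} pins down the depth: for $k\neq 2$ the coefficient $T_{p,n,k}$ has depth $n+1$, while for $k=2$ it has depth $n$. Hence for $k\neq 2$ every coefficient with $n\le m-1$ already lies in $V_k^m$, and for $k=2$ every coefficient with $n\le m$ lies in $V_2^m$. The one point requiring separate care is the weight-$2$ degeneracy: the factor $\left(s+\tfrac{k}{2}\right)\left(s+\tfrac{k}{2}-1\right)=s(s+1)$ in $\widetilde{G}_{p,2}(z,s)$, combined with the holomorphy of $\widehat{G}_{p,2}$ at $s=0$ from Lemma \ref{lem_T2}, forces $\widetilde{G}_{p,2}(z,0)=0$, i.e. $T_{p,0,2}=0$. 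This is precisely why the weight-$2$ basis is shifted to start at $n=1$. In both cases $E_k^m$ is then the span of exactly $m$ nonzero polyharmonic forms, each of depth at most $m$, so $E_k^m\subseteq V_k^m$.

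Next I would prove linear independence of the proposed basis, which simultaneously gives $\dim E_k^m=m$. The structural input is the recursion $\Delta_k T_{p,n,k}=(k-1)T_{p,n-1,k}+T_{p,n-2,k}$ of Proposition \ref{prop_laplacianT}, which shows that $\Delta_k$ acts on the span as a strictly lowering (nilpotent) operator in the order index $n$, with leading coefficient $k-1\neq 0$ (automatic since $k$ is even). Given a nontrivial relation $\sum_n c_n T_{p,n,k}=0$, let $N$ be the largest index with $c_N\neq 0$ and apply the maximal annihilating power of $\Delta_k$, namely $\Delta_k^{N}$ when $k\neq 2$ and $\Delta_k^{N-1}$ when $k=2$. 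By the depth bounds every term of strictly smaller index is killed, while the top term survives by the non-vanishing assertions $\Delta_k^{N}T_{p,N,k}\neq 0$ (resp. $\Delta_2^{N-1}T_{p,N,2}\neq 0$) of Proposition \ref{prop_T}; this forces $c_N=0$, a contradiction. Hence the listed coefficients are independent and form a basis of $E_k^m$.

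Since the genuinely substantial work — polyharmonicity and moderate growth of the $T_{p,n,k}$, the Laplacian recursion, and the non-vanishing of the top Laplacian power — is already contained in Propositions \ref{prop_Tpolyhar}, \ref{prop_laplacianT}, and \ref{prop_T}, the remaining difficulty is essentially bookkeeping: matching the depth of each $T_{p,n,k}$ against the defining depth $m$ of $V_k^m$ and reconciling the off-by-one shift between the $k\neq 2$ and $k=2$ cases caused by the vanishing $T_{p,0,2}=0$. The main thing to get right is packaging the nilpotent-$\Delta_k$ argument uniformly, so that the single step ``apply the maximal annihilating power and read off the surviving top coefficient'' handles both parities of behaviour at once; once that is arranged, the $m$-dimensionality and the explicit bases follow immediately.
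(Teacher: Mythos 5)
Your proposal is correct and coincides with the paper's intended argument: the paper states this proposition without a separate proof, as a summary of Propositions \ref{prop_Tpolyhar}, \ref{prop_laplacianT} and \ref{prop_T} together with Lemma \ref{lem_T2}, and your assembly --- containment via the depth bounds, the vanishing $T_{p,0,2}(z)=\widetilde{G}_{p,2}(z,0)=0$ explaining the weight-$2$ shift of the basis, and linear independence by applying the maximal annihilating power of $\Delta_k$ and invoking the non-vanishing of the top term from Proposition \ref{prop_T} --- is exactly that reasoning made explicit. There are no gaps.
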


\

\subsection{Fourier expansions of polyharmonic Maass forms}

In parallel with the case of $\SL_2(\Z)$, we find the general form of the Fourier expansion of a polyharmonic Maass form for $R(N).$ For $s_0  \in \C$ such that $s_0 \neq \frac{1-k}{2}$ and $m \geq 0$, let
\begin{align*}
\begin{cases}
    u_{k,0}^{[m],+}(y; s_0):=(\log y)^m y^{s_0}, \\
    u_{k,0}^{[m],-}(y; s_0):=(-1)^m(\log y)^m y^{1-k-s_0}, \\
\end{cases}
\end{align*}
and for $\epsilon \in \{\pm 1\}$, let
\begin{align*}
    u_{N,\epsilon,k,|n|}^{[m],-}(y;s_0):=y^{-\frac{k}{2}}\frac{\partial^m}{\partial s^m}\Big|_{s=s_0}W_{\frac{1}{2}\epsilon k,s+\frac{k-1}{2}}\left(4\pi |n| \frac{y}{\sqrt N}\right).
\end{align*}
\begin{proposition}
Let $N>1$ be an integer and $f(z)$ be a shifted polyharmonic Maass form of weight $k$ and depth $m$ for $R(p)$, whose eigenvalue is $c \in \C$. Let $s_0 \in \C$ satisfy $s_0(s_0+k-1)=c.$ Then the Fourier expansion of $f(z)$ is given by
\begin{align*}
    f(z)=\sum_{j=0}^{m-1}\left(c_{0,j}^+  u_{k,0}^{[j],+}(y; s_0)+c_{0,j}^-  u_{k,0}^{[j],-}(y; s_0)\right) + \sum_{\epsilon \in \{\pm 1\}}\sum_{n=1}^{\infty}\sum_{j=0}^{m-1} c_{\epsilon,j,n}^-  u_{N,\epsilon,k,|n|}^{[j],-}(y;s_0) e^{2\pi i \epsilon n x/\sqrt N},
\end{align*}
where $c_{0,j}^+, c_{0,j}^-, c_{\epsilon,j,n}^- \in \C.$ In particular, if $f(z)$ is a harmonic Maass form of weight $k$ for $R(p)$, then the Fourier expansion of $f$ is given by
\begin{align*}
    f(z)=\sum_{n=1}^{\infty}b_{-n}\Gamma\left(1-k,4\pi |n|\frac{y}{\sqrt N}\right) e^{-2\pi i nz/\sqrt N}+(b_0y^{1-k}+a_0) + \sum_{n=1}^{\infty}a_n e^{2\pi i n z/\sqrt N},
\end{align*}
for $b_{-n},a_n \in \C$.
\end{proposition}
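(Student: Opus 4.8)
The plan is to imitate the argument of Lagarias--Rhoades \cite{MR357462} (and \cite{MR3856183} for the shifted case): separate variables against the Fourier modes at the cusp $\infty$ and solve the resulting ordinary differential equations mode by mode. Since $T_N=\begin{pmatrix}1 & \sqrt N \\ 0 & 1\end{pmatrix}\in R(N)$, the invariance $f|_k T_N=f$ forces $f(z+\sqrt N)=f(z)$, so $f$ admits a Fourier expansion $f(z)=\sum_{n\in\Z}a_n(y)e^{2\pi i nx/\sqrt N}$. Because $\Delta_k$ commutes with the translation $z\mapsto z+\sqrt N$, the operator $(\Delta_k-c)^m$ preserves each Fourier mode; substituting $a_n(y)e^{2\pi i nx/\sqrt N}$ into $(\Delta_k-c)^m f=0$ and using the eigenvalue normalization $s(s+k-1)$ recorded in Lemma~\ref{lem_Geigenfunction}, I would reduce the problem, for each fixed $n$, to an $m$-th power of a second-order ordinary differential operator $L_{k,n}-c$ in the single variable $y$ annihilating $a_n(y)$.

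For the zero mode $n=0$ the $x$-derivatives drop out and $L_{k,0}-c$ becomes the Euler (equidimensional) operator $y^2\partial_y^2+ky\partial_y-c$, whose indicial equation $s(s+k-1)=c$ has the two roots $s_0$ and $1-k-s_0$; these are distinct precisely because we assume $s_0\neq\frac{1-k}{2}$. Passing to $y=e^t$ makes the operator constant-coefficient, and differentiating the one-parameter families $y^s$, $y^{1-k-s}$ in $s$ at $s_0$ produces the $2m$ functions $u_{k,0}^{[j],+}(y;s_0)=(\log y)^j y^{s_0}$ and $u_{k,0}^{[j],-}(y;s_0)=(-1)^j(\log y)^j y^{1-k-s_0}$ for $0\le j\le m-1$, which I would verify form a basis of $\ker(L_{k,0}-c)^m$. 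For $n\neq 0$, the substitution $a_n(y)=y^{-k/2}W(4\pi|n|y/\sqrt N)$ turns $L_{k,n}-c$ into the Whittaker operator with parameters $\kappa=\sgn(n)\frac{k}{2}$ and $\mu=s_0+\frac{k-1}{2}$, whose two independent solutions are the Whittaker functions $W_{\sgn(n)k/2,\mu}$ and $M_{\sgn(n)k/2,\mu}$. The key point, exactly as for $n=0$, is that if $W_s$ denotes the family with $\Delta_k W_s=s(s+k-1)W_s$ and $\lambda(s):=s(s+k-1)$, then $\lambda'(s_0)=2s_0+k-1\neq 0$, so a short Leibniz computation shows $(\Delta_k-c)$ lowers the order of the $s$-derivatives by one; hence $\partial_s^j W_s|_{s=s_0}$ is annihilated by $(\Delta_k-c)^m$, and these are linearly independent. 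These are exactly the functions $u_{N,\sgn(n),k,|n|}^{[j],-}$.

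It then remains to impose the moderate growth condition, which is the decisive step. For $n\neq 0$ the $M$-Whittaker function $M_{\sgn(n)k/2,\mu}$ and its $s$-derivatives grow exponentially in $y$, so moderate growth forces all $M$-type components to vanish, leaving only the $m$-dimensional span of the decaying solutions $u_{N,\sgn(n),k,|n|}^{[j],-}$. For $n=0$ each $(\log y)^j y^{s_0}$ and $(\log y)^j y^{1-k-s_0}$ grows at most polynomially times a logarithm, so all $2m$ basis functions survive; collecting the admissible coefficients over all $n$ yields the asserted expansion. The harmonic specialization $c=0$, $m=1$ (so $s_0\in\{0,1-k\}$) then follows by evaluating the Whittaker functions in closed form: $W_{k/2,(k-1)/2}(t)=e^{-t/2}t^{k/2}$ gives the holomorphic terms $a_n e^{2\pi i nz/\sqrt N}$, while $W_{-k/2,(k-1)/2}(t)$ is an incomplete Gamma function producing the $b_{-n}\,\Gamma(1-k,4\pi|n|y/\sqrt N)e^{-2\pi i nz/\sqrt N}$ terms, with $y^{1-k}$ and the constant arising from the $n=0$ mode.

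I expect the main obstacle to be the rigorous justification that $(\Delta_k-c)^m$ may be applied termwise to the Fourier series (interchanging the differential operator with the infinite sum, which requires uniform control of $a_n(y)$ on compacta in $y$), together with the verification that the $s$-derivatives $\partial_s^j W_s|_{s_0}$ genuinely span the full $2m$-dimensional solution space of the iterated operator and stay linearly independent. Both rest on the nonvanishing of $\lambda'(s_0)=2s_0+k-1$, which is exactly where the hypothesis $s_0\neq\frac{1-k}{2}$ is indispensable.
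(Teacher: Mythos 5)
Your proposal is correct and is essentially the paper's own proof: the paper simply states that the argument is ``completely in parallel'' with Lagarias--Rhoades \cite{MR357462} (Theorem 4.3 and Lemma 4.4), and your separation of variables via $T_N$-periodicity, the mode-by-mode analysis of $(\Delta_k-c)^m$ (Euler operator with indicial roots $s_0$, $1-k-s_0$ for $n=0$; Whittaker operator with $s$-derivatives $\partial_s^j W_s|_{s_0}$ for $n\neq 0$, valid since $\lambda'(s_0)=2s_0+k-1\neq 0$), and the elimination of the $M$-Whittaker solutions by moderate growth is precisely that argument, carried out for the period $\sqrt N$ and specialized to $c=0$, $m=1$ for the harmonic case exactly as intended.
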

\begin{proof}
The proof is completely in parallel with one for $\SL_2(\Z)$ given in \cite[Theorem 4.3]{MR357462} and \cite[Lemma 4.4]{MR357462}.
\end{proof}

\

\subsection{Structure of polyharmonic Maass forms}

Propositions in this subsection can be proved by following the same line in \cite{MR357462}, once we prove the analytic properties of the non-holomorphic Eisenstein series. There is no new idea of the proof, so we state our results briefly.

Recall that for any Fuchsian group $\Gamma$ for which $X(\Gamma)$ has genus $g$, the dimension of the space of weight $2$ modular forms for $\Gamma$ is $g+t-1$, where $t$ is the number of cusps of $\Gamma$ (the proof is given in \cite{MR1021004}). In particular, if $g_p=0$ for a prime $p$, then there is no non-zero weight $2$ modular form for $R(p).$ This observation implies the following propositions.

\begin{proposition}
Let $p$ be a prime for which the modular curve $X(R(p))$ has genus $g_p=0.$
\begin{enumerate}[\normalfont(a)]
    \item The space of weight $0$ modular forms $V_0^{1/2}(R(p))$ is $1$-dimensional, that is, $V_0^{1/2}(R(p))=\C.$
    \item For any integer $m \geq 0$, $V_0^{m+\frac{1}{2}}(R(p))=V_0^{m+1}(R(p)).$
    \item For any integer $m \geq 0$, $\dim V_0^{m+1}(R(p)) \leq \dim V_0^{m}(R(p))+1.$ Consequently, $\dim V_0^m(R(p)) \leq m$ for $m \geq 1.$
    \item The space of weight $2$ harmonic Maass forms $V_2^1(R(p))$ is $1$-dimensional, that is, $V_2^1(R(p))=\C T_{p,1,2}(z).$
    \item For any integer $m \geq 0$, $V_2^{m}(R(p))=V_2^{m+\frac{1}{2}}(R(p)).$
    \item For any integer $m \geq 0$, $\dim V_2^{m+1}(R(p)) \leq \dim V_2^{m}(R(p))+1.$ Consequently, $\dim V_2^m(R(p)) \leq m$ for $m \geq 1.$
\end{enumerate}
\end{proposition}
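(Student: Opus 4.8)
The plan is to run the depth-filtration machinery that Lagarias and Rhoades use for $\SL_2(\Z)$ in \cite{MR357462}, feeding in the single geometric fact that $X(R(p))$ has genus $g_p=0$ and, by Proposition~\ref{prop_cusp}, exactly one cusp. The six assertions split into three tasks: the base cases (a) and (d) identify $V_0^{1/2}$ and $V_2^1$; the collapse statements (b) and (e) compare the half-integer spaces with the neighbouring integer-depth spaces; and the growth bounds (c) and (f) follow by induction from a rank--nullity argument. Throughout I would use that $\Delta_k=\xi_{2-k}\xi_k$ (Lemma~\ref{lem_Geigenfunction}), that a holomorphic modular form is annihilated by $\xi_k$ hence by $\Delta_k$, and that $\Delta_k$ carries $V_k^{m+1}(R(p))$ into $V_k^m(R(p))$ with kernel exactly $V_k^1(R(p))$; well-definedness of this map (that $\Delta_k f$ retains moderate growth) is read off from the Fourier expansion of a polyharmonic Maass form established earlier in this section.

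First I would record the two weight-specific inputs. Since $g_p=0$ and $t=1$, the dimension formula for weight-$2$ modular forms quoted from \cite{MR1021004}, namely $\dim M_2(R(p))=g_p+t-1$, gives $M_2(R(p))=0$. For weight $0$, a holomorphic modular form is a bounded holomorphic function on the compact surface $X(R(p))$, hence constant, and the same maximum-principle argument shows that a weight-$0$ harmonic Maass form (a modular-invariant harmonic function of moderate growth) extends across the cusp and is constant, so $V_0^1(R(p))=\C$. This yields (a) at once: a member of $V_0^{1/2}$ is by definition a holomorphic weight-$0$ modular form, i.e.\ a constant. For (d) I would use the map $\xi_2\colon V_2^1(R(p))\to M_0(R(p))=\C$; it is injective because its kernel is the space of holomorphic weight-$2$ forms $M_2(R(p))=0$, and it is surjective because the explicit form $T_{p,1,2}(z)$ of Lemma~\ref{lem_T2} has a nonzero $y^{-1}$-term, so that $\xi_2 T_{p,1,2}$ is a nonzero constant. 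Hence $V_2^1(R(p))=\C\,T_{p,1,2}(z)$ is one-dimensional.

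For the collapse statements I would exploit precisely the asymmetry between these two weights. In both weights one has $V_k^m\subseteq V_k^{m+1/2}\subseteq V_k^{m+1}$, since a holomorphic modular form lies in $\ker\Delta_k$. For weight $2$, a member of $V_2^{m+1/2}$ satisfies $\Delta_2^m f=g$ with $g\in M_2(R(p))=0$, so $\Delta_2^m f=0$ and $f\in V_2^m$, which proves (e). For weight $0$, if $f\in V_0^{m+1}$ then $\Delta_0^m f\in V_0^1=\C$ is a constant, hence a holomorphic weight-$0$ modular form, so $f\in V_0^{m+1/2}$; together with the trivial inclusion this gives (b). The distinction is exactly that for $k=0$ the kernel $V_0^1$ consists of holomorphic forms, whereas for $k=2$ the holomorphic forms inside $V_2^1$ are zero.

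Finally, (c) and (f) follow by induction: applying rank--nullity to $\Delta_k\colon V_k^{m+1}(R(p))\to V_k^m(R(p))$, whose kernel is $V_k^1(R(p))$ of dimension $1$ by (a)/(d), gives $\dim V_k^{m+1}\le \dim V_k^m+\dim V_k^1=\dim V_k^m+1$, and starting from $\dim V_k^1=1$ this yields $\dim V_k^m\le m$ for $m\ge1$. I expect the main obstacle to be not the linear algebra but the analytic bookkeeping underneath it: checking that $\Delta_k$ and $\xi_k$ preserve the moderate-growth condition so that the filtration map and the map $\xi_2$ genuinely land in the claimed spaces, and confirming that $\ker(\Delta_k|_{V_k^{m+1}})$ is exactly $V_k^1$ and not larger. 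Both points are controlled by the Fourier-expansion proposition for polyharmonic Maass forms on $R(p)$, so once that is in hand the arguments proceed as in \cite{MR357462}; the sharpness of the bounds $\dim V_k^m\le m$ is then witnessed by the $m$-dimensional Eisenstein spaces $E_k^m(R(p))$.
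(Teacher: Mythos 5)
Your overall route is exactly the one the paper intends: the paper gives no detailed proof here, saying only that the arguments of \cite{MR357462} go through once one knows $\dim M_2(R(p))=g_p+t-1=0$ (genus zero plus the single cusp of Proposition~\ref{prop_cusp}), and your treatment of (b)--(f) --- identifying $V_2^1$ via the injectivity of $\xi_2$ (kernel $M_2(R(p))=0$) and the nonzero $y^{-1}$-term of $T_{p,1,2}$ from Lemma~\ref{lem_T2}, collapsing the half-depth spaces using $M_2(R(p))=0$ in weight $2$ and $V_0^1=\C$ in weight $0$, and running rank--nullity on $\Delta_k\colon V_k^{m+1}\to V_k^m$ with kernel exactly $V_k^1$ --- is precisely that line, with the well-definedness issues correctly delegated to the Fourier-expansion proposition.

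There is, however, one step that fails as written: the claim that $V_0^1(R(p))=\C$ follows ``by the same maximum-principle argument'' because a modular-invariant harmonic function of moderate growth ``extends across the cusp.'' It need not extend. By the paper's Fourier-expansion proposition in the harmonic case $k=0$, the constant term of $f\in V_0^1$ is $a_0+b_0y$, and the term $b_0y$ is harmonic and of moderate growth, yet in the cusp coordinate $q=e^{2\pi i z/\sqrt p}$ it equals $-\frac{\sqrt p}{2\pi}\,b_0\log|q|$, a logarithmic singularity that does not continue harmonically across the puncture; a maximum principle alone cannot exclude it. (Your extension argument is fine for \emph{holomorphic} weight-$0$ forms, where moderate growth kills the negative Fourier modes; harmonicity is genuinely weaker.) The repair is short and uses an input you already have: $\xi_0 f$ is a holomorphic weight-$2$ modular form of moderate growth whose constant term is $\overline{b_0}$ (since $\xi_0(b_0y)=\overline{b_0}$), so $M_2(R(p))=0$ forces $\xi_0 f=0$, hence $b_0=0$, and then $f$ is holomorphic and constant by your compactness argument. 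This is in fact how \cite{MR357462} argue for $\SL_2(\Z)$, and it shows weight $0$ requires the same geometric input $M_2(R(p))=0$ as weight $2$. Since (b) and (c) rest on $V_0^1=\C$, the correction is needed for them as well; everything else in your proposal is sound.
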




\begin{proposition}\label{prop_polyhar}
Let $k \geq 4$ be an even integer and $m \geq 0$ be an integer.
\begin{enumerate}[\normalfont(a)]
    \item Any $1$-harmonic Maass form of weight $k$ for $R(p)$ is holomorphic, i.e., $V_k^1(R(p))=V_k^{1/2}(R(p))=M_k(R(p))$.
    \item The space $V_{2-k}^1(R(p))$ is spanned by $T_{p,2-k,0}(z)=\widetilde{G}_{p,2-k}(z,0)$.
    \item For $m \geq 1$, $V_k^m(R(p))=E_k^m(R(p))+S_k(R(p))$, where $E_k^m(R(p))$ is the $m$-dimensional so-called Eisenstein space spanned by $T_{p,k,0}(z), \cdots , T_{p,k,m-1}(z)$, and $S_k(R(p))$ is the space of cusp forms of weight $k$ for $R(p)$. Moreover, $V_k^m(R(p))=V_k^{m-\frac{1}{2}}(R(p)).$ 
    \item For $m \geq 0$, $V_{2-k}^m(R(p))=E_{2-k}^m(R(p))$ which is the $m$-dimensional Eisenstein space spanned by $T_{p,2-k,0}(z), \cdots , T_{p,2-k,m-1}(z)$. Moreover, $V_{2-k}^m(R(p))=V_{2-k}^{m+\frac{1}{2}}(R(p)).$
\end{enumerate}
\end{proposition}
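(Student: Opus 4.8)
The plan is to transplant the argument of Lagarias--Rhoades \cite{MR357462} to $R(p)$, the only genuine inputs being the analytic facts about $\widetilde{G}_{p,k}(z,s)$ proved above and the vanishing of holomorphic modular forms of negative weight. The machinery is threefold: the factorization $\Delta_k=\xi_{2-k}\xi_k$ from Lemma~\ref{lem_Geigenfunction}, where $\xi_\ell=2iy^\ell\,\overline{\partial_{\bar z}}$ sends a harmonic form of weight $\ell$ and moderate growth to a \emph{holomorphic} modular form of weight $2-\ell$; the recursion $\Delta_kT_{p,n,k}=(k-1)T_{p,n-1,k}+T_{p,n-2,k}$ of Proposition~\ref{prop_laplacianT}; and the Bruinier--Funke pairing $\{F,g\}=\langle\xi_\ell F,g\rangle$, which vanishes against every cusp form $g$ when $F$ has moderate growth (hence no principal part). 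For (a), let $f\in V_k^1(R(p))$ with $k\ge 4$. Then $\xi_kf$ is holomorphic of weight $2-k<0$, so $\xi_kf=0$, $f$ is holomorphic, and moderate growth gives $f\in M_k(R(p))$; thus $V_k^1\subseteq M_k$, while $M_k=V_k^{1/2}\subseteq V_k^1$ is clear, yielding the chain of equalities.

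For (b), the same computation shows $\xi_{2-k}\colon V_{2-k}^1\to M_k(R(p))$ has kernel $M_{2-k}(R(p))=\{0\}$, hence is injective. Since $f$ has moderate growth, the pairing $\{f,g\}=\langle\xi_{2-k}f,g\rangle$ vanishes for every $g\in S_k(R(p))$, so $\xi_{2-k}f$ lies in the orthogonal complement $\mathbb{C}E_k$ of the cusp forms in $M_k$ (regularized Petersson product). Therefore $\dim V_{2-k}^1\le 1$, and as $T_{p,2-k,0}=\widetilde{G}_{p,2-k}(z,0)$ is a nonzero element of $V_{2-k}^1$ by Propositions~\ref{prop_Tpolyhar} and~\ref{prop_T}, it spans the space.

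Parts (c) and (d) follow by induction on the depth $m$, with (a) and (b) as the cases $m=1$. Given $f\in V_k^m$, the form $\Delta_kf$ lies in $V_k^{m-1}$, and applying the pairing to $\xi_kf$ (again of moderate growth, hence principal-part-free) shows $\Delta_kf=\xi_{2-k}(\xi_kf)$ is orthogonal to $S_k$; combined with the inductive hypothesis $V_k^{m-1}=E_k^{m-1}+S_k$ and the orthogonality $E_k^{m-1}\perp S_k$, this forces $\Delta_kf\in E_k^{m-1}$. The recursion of Proposition~\ref{prop_laplacianT} makes $\Delta_k\colon E_k^m\to E_k^{m-1}$ surjective, being upper triangular of maximal rank with nonzero diagonal $(k-1)$ in the basis $T_{p,0,k},\dots,T_{p,m-1,k}$, so one may choose $g\in E_k^m$ with $\Delta_kg=\Delta_kf$; then $f-g\in\ker\Delta_k=V_k^1=E_k^1+S_k$ by (a), whence $f\in E_k^m+S_k$. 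Linear independence of the $T_{p,n,k}$ (distinct depths, Proposition~\ref{prop_T}) and $E_k^m\cap S_k=\{0\}$ give $\dim E_k^m=m$. The identity $V_k^m=V_k^{m-\frac12}$ is then immediate: if $\Delta_k^mf=0$ then $\Delta_k^{m-1}f\in V_k^1=M_k$ is holomorphic modular, which is exactly the depth-$(m-\tfrac12)$ condition. Part (d) is the same induction but with $S_{2-k}(R(p))=\{0\}$, so no cuspidal terms ever appear and $V_{2-k}^m=E_{2-k}^m$; the shifted equality $V_{2-k}^m=V_{2-k}^{m+\frac12}$ holds because the obstruction modular form lies in $M_{2-k}=\{0\}$.

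The main obstacle is the fact used silently in (b) and in the reduction step of (c): that a moderate-growth (poly)harmonic form for $R(p)$ has shadow orthogonal to the cusp forms, i.e.\ the vanishing of the Bruinier--Funke pairing in the absence of a principal part, together with the orthogonal splitting $M_k(R(p))=\mathbb{C}E_k\oplus S_k(R(p))$. This is what pins $\dim V_{2-k}^1$ down to $1$ rather than $\dim M_k$ and what kills the cuspidal component of $\Delta_kf$; transporting the pairing and the regularized Petersson structure from $\SL_2(\Z)$ (as in \cite{MR357462}, \cite{MR2097357}) to $R(p)$ is routine but is the one place where the cuspidal contributions must be genuinely controlled. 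Everything else is bookkeeping with the recursion of Proposition~\ref{prop_laplacianT} and the explicit Fourier expansions already established.
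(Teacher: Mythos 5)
Your proposal is correct and takes essentially the same route as the paper: the paper offers no independent argument for Proposition~\ref{prop_polyhar}, saying only that the results of this subsection follow ``the same line'' as Lagarias--Rhoades \cite{MR357462} once the analytic properties of $\widetilde{G}_{p,k}(z,s)$ are established, and your reconstruction --- the factorization $\Delta_k=\xi_{2-k}\xi_k$, the vanishing of negative-weight holomorphic forms, the Bruinier--Funke pairing to kill cuspidal components (using that $R(p)$ has a single cusp, so $M_k(R(p))=\mathbb{C}\,T_{p,k,0}\oplus S_k(R(p))$), and induction on depth via the recursion of Proposition~\ref{prop_laplacianT} --- is precisely that transplanted argument. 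The only inputs beyond bookkeeping are the ones the paper itself delegates to \cite{MR357462} and \cite{MR2097357}, and you have identified and supplied them correctly.
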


Note that for any Fuchsian group of the first kind, the cusp form of negative weight is always $0$. Hence for $k \neq 0,2$ we summarize Proposition~\ref{prop_polyhar} as
\begin{itemize}
    \item $V_k^m(p)=E_k^m(p)\oplus S_k(p)=\langle T_{p,k,0}(z), \cdots , T_{p,k,m-1}(z) \rangle \oplus S_k(p)$,
 \item $V_k^{m -\sgn(k)\frac{1}{2}}(p)=V_k^m(p)$.
\end{itemize}


\def\cprime{$'$} \def\Dbar{\leavevmode\lower.6ex\hbox to 0pt{\hskip-.23ex
  \accent"16\hss}D} \def\cftil#1{\ifmmode\setbox7\hbox{$\accent"5E#1$}\else
  \setbox7\hbox{\accent"5E#1}\penalty 10000\relax\fi\raise 1\ht7
  \hbox{\lower1.15ex\hbox to 1\wd7{\hss\accent"7E\hss}}\penalty 10000
  \hskip-1\wd7\penalty 10000\box7}
  \def\polhk#1{\setbox0=\hbox{#1}{\ooalign{\hidewidth
  \lower1.5ex\hbox{`}\hidewidth\crcr\unhbox0}}} \def\dbar{\leavevmode\hbox to
  0pt{\hskip.2ex \accent"16\hss}d}
  \def\cfac#1{\ifmmode\setbox7\hbox{$\accent"5E#1$}\else
  \setbox7\hbox{\accent"5E#1}\penalty 10000\relax\fi\raise 1\ht7
  \hbox{\lower1.15ex\hbox to 1\wd7{\hss\accent"13\hss}}\penalty 10000
  \hskip-1\wd7\penalty 10000\box7}
  \def\ocirc#1{\ifmmode\setbox0=\hbox{$#1$}\dimen0=\ht0 \advance\dimen0
  by1pt\rlap{\hbox to\wd0{\hss\raise\dimen0
  \hbox{\hskip.2em$\scriptscriptstyle\circ$}\hss}}#1\else {\accent"17 #1}\fi}
  \def\bud{$''$} \def\cfudot#1{\ifmmode\setbox7\hbox{$\accent"5E#1$}\else
  \setbox7\hbox{\accent"5E#1}\penalty 10000\relax\fi\raise 1\ht7
  \hbox{\raise.1ex\hbox to 1\wd7{\hss.\hss}}\penalty 10000 \hskip-1\wd7\penalty
  10000\box7} \def\lfhook#1{\setbox0=\hbox{#1}{\ooalign{\hidewidth
  \lower1.5ex\hbox{'}\hidewidth\crcr\unhbox0}}}
\providecommand{\bysame}{\leavevmode\hbox to3em{\hrulefill}\thinspace}
\providecommand{\MR}{\relax\ifhmode\unskip\space\fi MR }
\providecommand{\MRhref}[2]{%
  \href{http://www.ams.org/mathscinet-getitem?mr=#1}{#2}
}
\providecommand{\href}[2]{#2}

\end{document}